\def\rad{\mbox{rad}}
\def\End{\mbox{End}}
\def\P{\mathcal{P}}
\def\I{\mathcal{I}}
\def\C{\mathscr{C}}
\def\E{\mathbb{E}}
\def\s{\mathfrak{s}}
\def\id{\mathrm{id}}
\def\op{^\mathrm{op}}
\def\Ab{\mathsf{Ab}}
\def\del{\delta}
\def\dr{\ar@{->}[r]}
\def\Im{\mbox{Im}}\def\Ker{\mbox{Ker}}
\def\add{\mbox{add}}
\def\Ext{\mbox{Ext}}
\def\Hom{\mbox{Hom}}
\newcommand{\CC}{{\bf{C}}^{n+2}_{\C}}
\newcommand{\Id}{\operatorname{Id}}
\newcommand{\ov}{\overset}
\newcommand{\lra}{\longrightarrow}
\newcommand{\co}{\colon}
\newcommand{\uas}{^{\ast}}            
\newcommand{\sas}{_{\ast}}
\newcommand{\Xd}{\langle X_{\bullet},\del\rangle}  
\newcommand{\ush}{^\sharp}           
\newcommand{\ssh}{_\sharp}\newcommand{\set}[1]{\left\{#1\right\}}
\begin{document}
\baselineskip=15pt
\title{\Large{\bf Higher Auslander-Reiten sequences via morphisms\\[1mm] determined by objects\footnotetext{\hspace{-1em} Jian He was supported by the National Natural Science Foundation of China (Grant No. 12171230). Panyue Zhou was supported by the National Natural Science Foundation of China (Grant No. 11901190) and the Scientific Research Fund of Hunan Provincial Education Department (Grant No. 19B239).} }}
\medskip
\author{Jian He, Jing He and Panyue Zhou}

\date{}

\maketitle
\def\blue{\color{blue}}
\def\red{\color{red}}

\newtheorem{theorem}{Theorem}[section]
\newtheorem{lemma}[theorem]{Lemma}
\newtheorem{corollary}[theorem]{Corollary}
\newtheorem{proposition}[theorem]{Proposition}
\newtheorem{conjecture}{Conjecture}
\theoremstyle{definition}
\newtheorem{definition}[theorem]{Definition}
\newtheorem{question}[theorem]{Question}
\newtheorem{remark}[theorem]{Remark}
\newtheorem{remark*}[]{Remark}
\newtheorem{example}[theorem]{Example}
\newtheorem{example*}[]{Example}
\newtheorem{condition}[theorem]{Condition}
\newtheorem{condition*}[]{Condition}
\newtheorem{construction}[theorem]{Construction}
\newtheorem{construction*}[]{Construction}

\newtheorem{assumption}[theorem]{Assumption}
\newtheorem{assumption*}[]{Assumption}

\baselineskip=17pt
\parindent=0.5cm

\begin{abstract}
\begin{spacing}{1.2}
Let $(\C,\E,\s)$ be an $\Ext$-finite, Krull-Schmidt and $k$-linear $n$-exangulated category with $k$ a
commutative artinian ring. In this note, we define two additive subcategories $\C_r$ and $ \C_l$ of $\C$ in terms of the representable functors from the stable category of $\C$ to the category of finitely generated $k$-modules. Moreover, we show that there exists an equivalence between the stable categories of these two full subcategories. Finally, we give some equivalent characterizations on the existence of Auslander-Reiten $n$-exangles via determined morphisms. These results unify and extend their works
by Jiao--Le for exact categories,  Zhao--Tan--Huang  for extriangulated categories, Xie--Liu--Yang for $n$-abelian categories.\\[0.2cm]
\textbf{Keywords:} $n$-exangulated categories; Auslander-Reiten $n$-exangles; $n$-abelian categories; $(n+2)$-angulated categories; determined morphisms; extriangulated categories\\[0.1cm]
\textbf{2020 Mathematics Subject Classification:} 18G80; 18E10; 18G50 \end{spacing}
\end{abstract}

\pagestyle{myheadings}
\markboth{\rightline {\scriptsize J. He, J. He and P. Zhou }}
         {\leftline{\scriptsize Higher Auslander-Reiten sequences via morphisms determined by objects}}

\section{Introduction}
The notion of extriangulated categories was introduced by Nakaoka--Palu in \cite{NP}, which can be viewed as a simultaneous generalization of exact categories and triangulated categories.
 The data of such a category is a triplet $(\C,\E,\s)$, where $\C$ is an additive category, $$\mathbb{E}\colon \C^{\rm op}\times \C \rightarrow \Ab~~\mbox{($\Ab$ is the category of abelian groups)}$$ is an additive bifunctor and $\mathfrak{s}$ assigns to each $\delta\in \mathbb{E}(C,A)$ a class of $3$-term sequences with end terms $A$ and $C$ such that certain axioms hold. Recently, Herschend--Liu--Nakaoka \cite{HLN}
introduced the notion of $n$-exangulated categories for any positive integer $n$. It is not only a higher dimensional analogue of extriangulated categories,
but also gives a common generalization of $n$-exact categories ($n$-abelian categories are also $n$-exact categories) in the sense of
Jasso \cite{Ja} and $(n+2)$-angulated in the sense of Geiss--Keller--Oppermann \cite{GKO}. However, there are some other examples of $n$-exangulated categories which are neither $n$-exact nor $(n+2)$-angulated, see \cite{HLN, LZ,HZZ2}.

Auslander-Reiten theory was introduced by Auslander and Reiten in \cite{AR1,AR2}. Since its introduction, Auslander-Reiten theory has become a fundamental tool for
studying the representation theory of Artin algebras.
Later it has been generalized to these situation of exact categories \cite{Ji}, triangulated categories \cite{H,RV} and its subcategories \cite{AS,J}
and some certain additive categories \cite{L,J,S} by many authors. Iyama, Nakaoka and Palu \cite{INP} developed  Auslander--Reiten theory for extriangulated categories.
This unifies Auslander--Reiten theories in exact categories and triangulated categories independently.

Functors and morphisms determined by objects were introduced by Auslander \cite{Au}.
These concepts provide a method to construct and organise morphisms in additive categories, generalizing previous work of Auslander and Reiten on almost split sequences \cite{AR1}.
Later, Ringel \cite{R2} presented a survey of these results, rearranged them as lattice isomorphisms (the Auslander bijections) and added many examples.
  The concept of a morphism determined by an object provides a method
to construct or classify morphisms in a fixed category.
Krause \cite{K} used the method to construct or classify morphisms in triangulated categories having Serre duality. He \cite{K1} also described a procedure for constructing morphisms in additive categories, combining Auslander's concept of a morphism determined by an object with the existence of flat covers. Suppose $k$ is a commutative artinian ring. Let $\C$ be an $\Ext$-finite, Krull-Schmidt exact category, Jiao--Le \cite{JL} gave an equivalence characterization on the existence of almost split sequences using determined morphisms. Recently, Zhao--Tan--Huang \cite{ZTH} extended Jiao--Le's result to the $\Ext$-finite, Krull-Schmidt $k$-linear extriangulated category. Namely, Zhao--Tan--Huang provided an equivalence characterization on the existence of almost split triangles using determined morphisms.  Subsequently,  Xie-Liu-Yang \cite{XLY} proved a similar result to Jiao--Le and Zhao--Tan--Huang. More precisely,  if $\C$ is a small $\Ext$-finite, Krull-Schmidt $n$-abelian $k$-category with enough projectives and enough injectives, they gave an equivalence characterization on the existence of $n$-Auslander--Reiten sequences via determined morphisms. Based on this idea, we have a natural question of whether their results of Jiao--Le \cite{JL} and Xie-Liu-Yang \cite{XLY} can be extended under the framework of $n$-exangulated categories or whether the result of Zhao--Tan--Huang \cite{ZTH} has a higher counterpart. In this article, we give an affirmative answer. The paper is organized as follows.

In Section 2, we recall the definition of $n$-exangulated category and review some results.

Let $(\C,\E,\s)$ be an $\Ext$-finite, Krull-Schmidt and $k$-linear $n$-exangulated category with $k$ a
commutative artinian ring. In Section 3, we define two additive subcategories of $\C$ as follows:
$$\C_r=\{X\in\C|~\text{the functor}~ D\E(X,-)\colon\overline\C\to  k\text{-mod}~~\text{is representable}\},$$
$$\C_l=\{X\in\C|~\text{the functor}~ D\E(-,X)\colon\underline\C\to k\text{-mod}~~\text{is representable}\},$$
where $D:=\Hom_k(-,\check{k})$ with $\check{k}$ be the minimal injective cogenerator for the category $k$-mod of finitely generated $k$-modules. Meanwhile, based on these two full subcategories, we construct two functors $\tau_n\colon\underline{\C_r}\to\overline{\C_l}\  and~~ \tau_n^-\colon\overline{\C_l}\to\underline{\C_r}\
$, which are equivalences. Moreover, $\tau_n$ and $\tau_n^-$ are quasi-inverse to each other {\rm (see Theorem \ref{qua})}.

In Section 4, we give some equivalent characterizations on the existence of Auslander-Reiten $n$-exangles using determined morphisms under technical conditions
(see Condition \ref{cd}).

\begin{theorem}{\rm (see Theorem \ref{thm} for details)}
Let $C$  be a non-projective indecomposable object in $\C$. The following statements are equivalent:
\begin{enumerate}
\item[$(1)$]
$C\in\C_r$.
\item[$(2)$]\rm
For each $X_{n+1}\in\C$ and each right $\End_{\C}(C)$-submodule $H$ of $\C(C,X_{n+1})$ satisfying $\P(C,X_{n+1})\subseteq H$, there exists a distinguished $n$-exangle
$$X_{\bullet}:X_0\xrightarrow{}X_1\xrightarrow{}X_2\xrightarrow{}\cdots\xrightarrow{}X_n\xrightarrow{\alpha}X_{n+1}\overset{\eta}{\dashrightarrow}$$
where $\alpha$ is right $C$-determined such that $H=\rm\Im~\C(C,\alpha)$.
\item[$(3)$]\rm There exists an inflation $\alpha\colon X_{0}\to X_{1}$ whose intrinsic weak $n$-cokernel is $C$ such that $\alpha$ is left $K$-determined for some object $K$.

\item[$(4)$]\rm There exists an {Auslander-Reiten $n$-exangle} ending at $C$.
\item[$(5)$]\rm
There exists a non-split epimorphism deflation which is right $C$-determined.
\item[$(6)$]\rm
There exists a deflation $\alpha\colon X_{n}\to X_{n+1}$ and a morphism $f\colon C\to X_{n+1}$ such that $f$ almost factors through $\alpha$.

\end{enumerate}
\end{theorem}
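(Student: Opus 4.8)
The plan is to prove the six statements equivalent by running the cycle $(1)\Rightarrow(2)\Rightarrow(5)\Rightarrow(6)\Rightarrow(4)\Rightarrow(1)$ and then treating $(1)\Leftrightarrow(3)$ separately, as the left-handed mirror of $(1)\Leftrightarrow(4)$, transported through the quasi-inverse equivalences $\tau_{n}$ and $\tau_{n}^{-}$ of Theorem \ref{qua}. Two elementary facts about $C$ are used throughout: since $C$ is indecomposable and non-projective, $\End_{\C}(C)$ is local, $\P(C,C)\subseteq\rad\End_{\C}(C)$, and $\overline{\C}(C,C)$ is local; and for any morphism $g\colon C'\to C$ that is not a split epimorphism one has $\Im\,\C(C,g)\subseteq\rad\End_{\C}(C)$, since an endomorphism of $C$ factoring through $g$ cannot be invertible.

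The heart of the theorem is $(1)\Rightarrow(2)$. Assuming $C\in\C_{r}$, fix an object $\tau_{n}C$ together with a natural isomorphism $\overline{\C}(-,\tau_{n}C)\xrightarrow{\sim}D\E(C,-)$; combined with the $\Ext$-finiteness of $\C$ this also yields a canonical class $\delta_{C^{t}}\in\E(C^{t},\tau_{n}C^{t})$ for each $t\ge 1$. Given $X_{n+1}$ and an admissible submodule $H$ with $\P(C,X_{n+1})\subseteq H\subseteq\C(C,X_{n+1})$, choose a finite generating set $h_{1},\dots,h_{t}$ of $H$ over $\End_{\C}(C)$, form $h=(h_{1},\dots,h_{t})\colon C^{t}\to X_{n+1}$, and let $X_{\bullet}$ be the distinguished $n$-exangle obtained by pulling $\delta_{C^{t}}$ back along $h$, with deflation $\alpha\colon X_{n}\to X_{n+1}$. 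It then remains to verify that $\alpha$ is right $C$-determined and $\Im\,\C(C,\alpha)=H$; I would do this by applying $\C(C,-)$, $\overline{\C}(C,-)$ and $\E(C,-)$ to $X_{\bullet}$ and chasing the resulting long exact sequence against the defining isomorphism of $\tau_{n}C$ — the inclusion $H\subseteq\Im\,\C(C,\alpha)$ is the factorisation of each $h_{i}$ through $\alpha$ built into the pullback, while the reverse inclusion and the $C$-determinacy fall out of exactness. This is the $n$-exangulated reincarnation of Auslander's construction of morphisms determined by objects, and I expect the real obstacle of the whole theorem to be making every connecting homomorphism and every weak-$n$-cokernel manipulation legitimate in the $n$-dimensional framework — which is precisely where the technical hypotheses of Condition \ref{cd} must be invoked.

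The remaining steps of the cycle are shorter. For $(2)\Rightarrow(5)$, apply $(2)$ with $X_{n+1}=C$ and $H=\rad\End_{\C}(C)$ (which contains $\P(C,C)$ since $C$ is non-projective): the resulting deflation $\alpha\colon X_{n}\to C$ is right $C$-determined with $\Im\,\C(C,\alpha)=\rad\End_{\C}(C)\not\ni\id_{C}$, so it is a non-split epimorphism. For $(5)\Rightarrow(6)$, let $\alpha\colon X_{n}\to X_{n+1}$ be a non-split epimorphism deflation that is right $C$-determined; then $\id_{X_{n+1}}$ does not factor through $\alpha$, so $\C(C,\alpha)$ is not surjective and $\Im\,\C(C,\alpha)$ is a proper $\End_{\C}(C)$-submodule of the finite-length module $\C(C,X_{n+1})$. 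Choose $f\colon C\to X_{n+1}$ representing a simple submodule of $\C(C,X_{n+1})/\Im\,\C(C,\alpha)$; then $f\notin\Im\,\C(C,\alpha)$ (so $f$ does not factor through $\alpha$) while $f\cdot\rad\End_{\C}(C)\subseteq\Im\,\C(C,\alpha)$, so for every non-split epimorphism $g\colon C'\to C$ one has $\Im\,\C(C,fg)=f\cdot\Im\,\C(C,g)\subseteq f\cdot\rad\End_{\C}(C)\subseteq\Im\,\C(C,\alpha)$; by $C$-determinacy $fg$ factors through $\alpha$, i.e.\ $f$ almost factors through $\alpha$. For $(6)\Rightarrow(4)$, extend $\alpha$ to a distinguished $n$-exangle with class $\eta\in\E(X_{n+1},X_{0})$ and pull $\eta$ back along $f$; the almost-factorisation hypothesis says exactly that $f^{\ast}\eta\in\E(C,X_{0})$ is non-split while $(fg)^{\ast}\eta=0$ for every non-split epimorphism $g\colon C'\to C$, so $f^{\ast}\eta$ gives an $n$-exangle ending at $C$ which is right almost split there, and since $C$ is indecomposable the standard minimalisation argument upgrades it to an Auslander-Reiten $n$-exangle ending at $C$. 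Finally $(4)\Rightarrow(1)$: if $\eta\in\E(C,A)$ is an Auslander-Reiten $n$-exangle ending at $C$, one checks directly from its defining almost-split and minimality properties that $D\E(C,Z)\cong\overline{\C}(Z,A)$ naturally in $Z$, so $D\E(C,-)$ is representable and $C\in\C_{r}$ with $\tau_{n}C=A$.

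It remains to settle $(1)\Leftrightarrow(3)$, which is formally dual. By Theorem \ref{qua}, $C\in\C_{r}$ exactly when $\tau_{n}$ carries the corresponding object of $\underline{\C_{r}}$ to an object $K=\tau_{n}C\in\overline{\C_{l}}$, and this is the situation in which the left-hand analogue of the construction of $(1)\Rightarrow(2)$ produces an inflation $\alpha\colon X_{0}\to X_{1}$ that is left $K$-determined and has intrinsic weak $n$-cokernel $C$ — concretely, the left almost split inflation $A=\tau_{n}C\to X_{1}$ starting an Auslander-Reiten $n$-exangle ending at $C$. Conversely, such an $\alpha$ is not a section (its intrinsic weak $n$-cokernel $C$ is non-zero), so it extends to a non-split distinguished $n$-exangle ending at $C$ to which the mirror of $(6)\Rightarrow(4)$ applies, yielding an Auslander-Reiten $n$-exangle ending at $C$ and hence $C\in\C_{r}$ by $(4)\Rightarrow(1)$; alternatively one transports the representability of $D\E(-,X_{1})$ near $C$ back through $\tau_{n}^{-}$. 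No idea beyond Theorem \ref{qua} and the $\E$-duality already used for $(1)\Rightarrow(2)$ enters here.
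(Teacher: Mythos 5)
Your overall skeleton is close to the paper's (the paper proves $(1)\Leftrightarrow(2)$ via Propositions \ref{pro1} and \ref{hj}, $(1)\Leftrightarrow(4)$ via Proposition \ref{desc}, $(1)\Leftrightarrow(3)$ via the dual of Proposition \ref{prope}, and closes the cycle $(4)\Rightarrow(5)\Rightarrow(6)\Rightarrow(4)$ with Lemma \ref{lst}), and your $(2)\Rightarrow(5)$, $(6)\Rightarrow(4)$ and $(4)\Rightarrow(1)$ sketches are in the right spirit. But the step you yourself call the heart of the theorem, $(1)\Rightarrow(2)$, rests on a construction that does not exist. You propose to take generators $h_1,\dots,h_t$ of $H$, form $h=(h_1,\dots,h_t)\colon C^t\to X_{n+1}$, and obtain $X_{\bullet}$ ``by pulling $\delta_{C^t}$ back along $h$''. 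Since $\delta_{C^t}\in\E(C^t,\tau_n C^t)$ and pullback $c^{\ast}$ is only defined for morphisms $c$ \emph{into} the end term $C^t$, while $h$ points \emph{out of} $C^t$, no element of $\E(X_{n+1},-)$ and hence no $n$-exangle ending at $X_{n+1}$ arises from these data by base or cobase change. Nor can the idea be repaired by simply completing $h$ to a deflation: a deflation built from the generators of $H$ has $\Im\,\C(C,\alpha)\supseteq H$ but is in general not right $C$-determined, so the verification you defer to ``exactness'' would fail. The correct construction (Auslander/Krause/Jiao--Le, and Proposition \ref{pro1} here) is the opposite one: one forms the orthogonal $\underline H^{\perp}\subseteq\E(X_{n+1},\tau_n C)$ of $H$ under the duality $\phi_{X_{n+1},\tau_n C}$, uses $\Ext$-finiteness to pick finitely many $\End_{\C}(C)$-generators $\eta_1,\dots,\eta_m$ of $\underline H^{\perp}$, realizes each by an $n$-exangle from $\tau_n C$ to $X_{n+1}$, and pulls the direct sum back along the diagonal $X_{n+1}\to X_{n+1}^{\oplus m}$; right $C$-determinedness then comes from Lemma \ref{lem49} (first term in $\add\,\tau_n C$) and Lemma \ref{le40}, and $\Im\,\C(C,\alpha)=H$ from the identification ${}^{\perp}(\eta_i\End_{\C}(C))=\Im\,\C(C,\alpha_i)/\P(C,X_{n+1})$. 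This orthogonal-complement idea is missing from your proposal, and without it the equivalence $(1)\Leftrightarrow(2)$ (and your route to $(3)$, which leans on the same machinery) does not go through.

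Two smaller points. In $(5)\Rightarrow(6)$ you choose $f$ spanning a simple submodule of $\C(C,X_{n+1})/\Im\,\C(C,\alpha)$; the hypotheses give $\Ext$-finiteness only, not Hom-finiteness, so this quotient need not have a nonzero socle over the local ring $\End_{\C}(C)$, and the existence of such an $f$ requires an argument (the paper instead passes through minimal right determiners, Corollary \ref{cort}). In $(6)\Rightarrow(4)$ the phrase ``standard minimalisation argument'' conceals the real content of Lemma \ref{lst}: after taking the right minimal version and arranging the middle maps to lie in $\rad_{\C}$ (Lemma \ref{2st}), one still has to prove that the first morphism is left almost split, which is the longest computation in that lemma and is not automatic from $C$ being indecomposable.
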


\section{Preliminaries}
In this section, we briefly review basic concepts and results concerning $n$-exangulated categories.

{ For any pair of objects $A,C\in\C$, an element $\del\in\E(C,A)$ is called an {\it $\E$-extension} or simply an {\it extension}. We also write such $\del$ as ${}_A\del_C$ when we indicate $A$ and $C$. The zero element ${}_A0_C=0\in\E(C,A)$ is called the {\it split $\E$-extension}. For any pair of $\E$-extensions ${}_A\del_C$ and ${}_{A'}\del{'}_{C'}$, let $\delta\oplus \delta'\in\mathbb{E}(C\oplus C', A\oplus A')$ be the
element corresponding to $(\delta,0,0,{\delta}{'})$ through the natural isomorphism $\mathbb{E}(C\oplus C', A\oplus A')\simeq\mathbb{E}(C, A)\oplus\mathbb{E}(C, A')
\oplus\mathbb{E}(C', A)\oplus\mathbb{E}(C', A')$.

For any $a\in\C(A,A')$ and $c\in\C(C',C)$,  $\E(C,a)(\del)\in\E(C,A')\ \ \text{and}\ \ \E(c,A)(\del)\in\E(C',A)$ are simply denoted by $a_{\ast}\del$ and $c^{\ast}\del$, respectively.

Let ${}_A\del_C$ and ${}_{A'}\del{'}_{C'}$ be any pair of $\E$-extensions. A {\it morphism} $(a,c)\colon\del\to{\delta}{'}$ of extensions is a pair of morphisms $a\in\C(A,A')$ and $c\in\C(C,C')$ in $\C$, satisfying the equality
$a_{\ast}\del=c^{\ast}{\delta}{'}$.}

\begin{definition}\cite[Definition 2.7]{HLN}
Let $\bf{C}_{\C}$ be the category of complexes in $\C$. As its full subcategory, define $\CC$ to be the category of complexes in $\C$ whose components are zero in the degrees outside of $\{0,1,\ldots,n+1\}$. Namely, an object in $\CC$ is a complex $X_{\bullet}=\{X_i,d^X_i\}$ of the form
\[ X_0\xrightarrow{d^X_0}X_1\xrightarrow{d^X_1}\cdots\xrightarrow{d^X_{n-1}}X_n\xrightarrow{d^X_n}X_{n+1}. \]
We write a morphism $f_{\bullet}\co X_{\bullet}\to Y_{\bullet}$ simply $f_{\bullet}=(f_0,f_1,\ldots,f_{n+1})$, only indicating the terms of degrees $0,\ldots,n+1$.
\end{definition}

\begin{definition}\cite[Definition 2.11]{HLN}
By Yoneda lemma, any extension $\del\in\E(C,A)$ induces natural transformations
\[ \del\ssh\colon\C(-,C)\Rightarrow\E(-,A)\ \ \text{and}\ \ \del\ush\colon\C(A,-)\Rightarrow\E(C,-). \]
For any $X\in\C$, these $(\del\ssh)_X$ and $\del\ush_X$ are given as follows.
\begin{enumerate}
\item[\rm(1)] $(\del\ssh)_X\colon\C(X,C)\to\E(X,A)\ :\ f\mapsto f\uas\del$.
\item[\rm (2)] $\del\ush_X\colon\C(A,X)\to\E(C,X)\ :\ g\mapsto g\sas\delta$.
\end{enumerate}
We simply denote $(\del\ssh)_X(f)$ and $\del\ush_X(g)$ by $\del\ssh(f)$ and $\del\ush(g)$, respectively.
\end{definition}

\begin{definition}\cite[Definition 2.9]{HLN}
 Let $\C,\E,n$ be as before. Define a category $\AE:=\AE^{n+2}_{(\C,\E)}$ as follows.
\begin{enumerate}
\item[\rm(1)]  A pair $\Xd$ is an object of the category $\AE$ with $X_{\bullet}\in\CC$
and $\del\in\E(X_{n+1},X_0)$, called an $\E$-attached
complex of length $n+2$, if it satisfies
$$(d_0^X)_{\ast}\del=0~~\textrm{and}~~(d^X_n)^{\ast}\del=0.$$
We also denote it by
$$X_0\xrightarrow{d_0^X}X_1\xrightarrow{d_1^X}\cdots\xrightarrow{d_{n-2}^X}X_{n-1}
\xrightarrow{d_{n-1}^X}X_n\xrightarrow{d_n^X}X_{n+1}\overset{\delta}{\dashrightarrow}.$$
\item[\rm (2)]  For such pairs $\Xd$ and $\langle Y_{\bullet},\rho\rangle$,  $f_{\bullet}\colon\Xd\to\langle Y_{\bullet},\rho\rangle$ is
defined to be a morphism in $\AE$ if it satisfies $(f_0)_{\ast}\del=(f_{n+1})^{\ast}\rho$.

\end{enumerate}
\end{definition}

\begin{definition}\cite[Definition 2.13]{HLN}\label{def1}
 An {\it $n$-exangle} is an object $\Xd$ in $\AE$ that satisfies the listed conditions.
\begin{enumerate}
\item[\rm (1)] The following sequence of functors $\C\op\to\Ab$ is exact.
$$
\C(-,X_0)\xrightarrow{\C(-,\ d^X_0)}\cdots\xrightarrow{\C(-,\ d^X_n)}\C(-,X_{n+1})\xrightarrow{~\del\ssh~}\E(-,X_0)
$$
\item[\rm (2)] The following sequence of functors $\C\to\Ab$ is exact.
$$
\C(X_{n+1},-)\xrightarrow{\C(d^X_n,\ -)}\cdots\xrightarrow{\C(d^X_0,\ -)}\C(X_0,-)\xrightarrow{~\del\ush~}\E(X_{n+1},-)
$$
\end{enumerate}
In particular any $n$-exangle is an object in $\AE$.
A {\it morphism of $n$-exangles} simply means a morphism in $\AE$. Thus $n$-exangles form a full subcategory of $\AE$.
\end{definition}

\begin{definition}\cite[Definition 2.22]{HLN}
Let $\s$ be a correspondence which associates a homotopic equivalence class $\s(\del)=[{}_A{X_{\bullet}}_C]$ to each extension $\del={}_A\del_C$. Such $\s$ is called a {\it realization} of $\E$ if it satisfies the following condition for any $\s(\del)=[X_{\bullet}]$ and any $\s(\rho)=[Y_{\bullet}]$.
\begin{itemize}
\item[{\rm (R0)}] For any morphism of extensions $(a,c)\co\del\to\rho$, there exists a morphism $f_{\bullet}\in\CC(X_{\bullet},Y_{\bullet})$ of the form $f_{\bullet}=(a,f_1,\ldots,f_n,c)$. Such $f_{\bullet}$ is called a {\it lift} of $(a,c)$.
\end{itemize}
In such a case, we simple say that \lq\lq$X_{\bullet}$ realizes $\del$" whenever they satisfy $\s(\del)=[X_{\bullet}]$.

Moreover, a realization $\s$ of $\E$ is said to be {\it exact} if it satisfies the following conditions.
\begin{itemize}
\item[{\rm (R1)}] For any $\s(\del)=[X_{\bullet}]$, the pair $\Xd$ is an $n$-exangle.
\item[{\rm (R2)}] For any $A\in\C$, the zero element ${}_A0_0=0\in\E(0,A)$ satisfies
\[ \s({}_A0_0)=[A\ov{\id_A}{\lra}A\to0\to\cdots\to0\to0]. \]
Dually, $\s({}_00_A)=[0\to0\to\cdots\to0\to A\ov{\id_A}{\lra}A]$ holds for any $A\in\C$.
\end{itemize}
Note that the above condition {\rm (R1)} does not depend on representatives of the class $[X_{\bullet}]$.
\end{definition}

\begin{definition}\cite[Definition 2.23]{HLN}
Let $\s$ be an exact realization of $\E$.
\begin{enumerate}
\item[\rm (1)] An $n$-exangle $\Xd$ is called an $\s$-{\it distinguished} $n$-exangle if it satisfies $\s(\del)=[X_{\bullet}]$. We often simply say {\it distinguished $n$-exangle} when $\s$ is clear from the context.
\item[\rm (2)]  An object $X_{\bullet}\in\CC$ is called an {\it $\s$-conflation} or simply a {\it conflation} if it realizes some extension $\del\in\E(X_{n+1},X_0)$.
\item[\rm (3)]  A morphism $f$ in $\C$ is called an {\it $\s$-inflation} or simply an {\it inflation} if it admits some conflation $X_{\bullet}\in\CC$ satisfying $d_0^X=f$.
\item[\rm (4)]  A morphism $g$ in $\C$ is called an {\it $\s$-deflation} or simply a {\it deflation} if it admits some conflation $X_{\bullet}\in\CC$ satisfying $d_n^X=g$.
\end{enumerate}
\end{definition}

\begin{definition}\cite[Definition 2.27]{HLN}
For a morphism $f_{\bullet}\in\CC(X_{\bullet},Y_{\bullet})$ satisfying $f_0=\id_A$ for some $A=X_0=Y_0$, its {\it mapping cone} $M_{_{\bullet}}^f\in\CC$ is defined to be the complex
\[ X_1\xrightarrow{d^{M_f}_0}X_2\oplus Y_1\xrightarrow{d^{M_f}_1}X_3\oplus Y_2\xrightarrow{d^{M_f}_2}\cdots\xrightarrow{d^{M_f}_{n-1}}X_{n+1}\oplus Y_n\xrightarrow{d^{M_f}_n}Y_{n+1} \]
where $d^{M_f}_0=\begin{bmatrix}-d^X_1\\ f_1\end{bmatrix},$
$d^{M_f}_i=\begin{bmatrix}-d^X_{i+1}&0\\ f_{i+1}&d^Y_i\end{bmatrix}\ (1\le i\le n-1),$
$d^{M_f}_n=\begin{bmatrix}f_{n+1}&d^Y_n\end{bmatrix}$.

{\it The mapping cocone} is defined dually, for morphisms $h_{\bullet}$ in $\CC$ satisfying $h_{n+1}=\id$.
\end{definition}

\begin{definition}\cite[Definition 2.32]{HLN}
An {\it $n$-exangulated category} is a triplet $(\C,\E,\s)$ of additive category $\C$, additive bifunctor $\E\co\C\op\times\C\to\Ab$, and its exact realization $\s$, satisfying the following conditions.
\begin{itemize}
\item[{\rm (EA1)}] Let $A\ov{f}{\lra}B\ov{g}{\lra}C$ be any sequence of morphisms in $\C$. If both $f$ and $g$ are inflations, then so is $g\circ f$. Dually, if $f$ and $g$ are deflations, then so is $g\circ f$.

\item[{\rm (EA2)}] For $\rho\in\E(D,A)$ and $c\in\C(C,D)$, let ${}_A\langle X_{\bullet},c\uas\rho\rangle_C$ and ${}_A\langle Y_{\bullet},\rho\rangle_D$ be distinguished $n$-exangles. Then $(\id_A,c)$ has a {\it good lift} $f_{\bullet}$, in the sense that its mapping cone gives a distinguished $n$-exangle $\langle M^f_{_{\bullet}},(d^X_0)\sas\rho\rangle$.
 \item[{\rm (EA2$\op$)}] Dual of {\rm (EA2)}.
\end{itemize}
Note that the case $n=1$, a triplet $(\C,\E,\s)$ is a  $1$-exangulated category if and only if it is an extriangulated category, see \cite[Proposition 4.3]{HLN}.
\end{definition}

\begin{example}
From \cite[Proposition 4.34]{HLN} and \cite[Proposition 4.5]{HLN},  we know that $n$-exact categories and $(n+2)$-angulated categories are $n$-exangulated categories.
There are some other examples of $n$-exangulated categories
 which are neither $n$-exact nor $(n+2)$-angulated, see \cite{HLN,LZ,HZZ2}.
\end{example}
The following some Lemmas are very useful which are needed later on.

\begin{lemma}\emph{\cite[Lemma 2.12]{LZ}}\label{a1}
Let $(\C,\E,\s)$ be an $n$-exangulated category, and
$$A_0\xrightarrow{\alpha_0}A_1\xrightarrow{\alpha_1}A_2\xrightarrow{\alpha_2}\cdots\xrightarrow{\alpha_{n-2}}A_{n-1}
\xrightarrow{\alpha_{n-1}}A_n\xrightarrow{\alpha_n}A_{n+1}\overset{\delta}{\dashrightarrow}$$
be a distinguished $n$-exangle. Then we have the following exact sequences:
$$\C(-, A_0)\xrightarrow{}\C(-, A_1)\xrightarrow{}\cdots\xrightarrow{}
\C(-, A_{n+1})\xrightarrow{}\E(-, A_{0})\xrightarrow{}\E(-, A_{1})\xrightarrow{}\E(-, A_{2});$$
$$\C(A_{n+1},-)\xrightarrow{}\C(A_{n},-)\xrightarrow{}\cdots\xrightarrow{}
\C(A_0,-)\xrightarrow{}\E(A_{n+1},-)\xrightarrow{}\E(A_{n},-)\xrightarrow{}\E(A_{n-1},-).$$
\end{lemma}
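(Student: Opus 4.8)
The plan is to prove the first exact sequence directly and then to deduce the second one by passing to the opposite $n$-exangulated category $(\C\op,\E\op,\s\op)$ (which is again $n$-exangulated), in which the given distinguished $n$-exangle turns into the reversed one $A_{n+1}\to A_n\to\cdots\to A_0\overset{\del}{\dashrightarrow}$, so that the first sequence for the opposite category is literally the second sequence for $\C$. For the first sequence, the segment from $\C(-,A_0)$ through the connecting transformation $\del\ssh\colon\C(-,A_{n+1})\Rightarrow\E(-,A_0)$ is, term by term, exactly the exact sequence that Definition \ref{def1}(1) attaches to a distinguished $n$-exangle; hence the whole content of the lemma is exactness at the two remaining spots $\E(-,A_0)$ and $\E(-,A_1)$, where the maps are $(\alpha_0)_\ast$ and $(\alpha_1)_\ast$. (One may be able to extract this from results in \cite{HLN}; in any case I outline a self-contained route.)

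That we at least have a complex at those two spots is formal: since $\langle X_\bullet,\del\rangle$ is an $\E$-attached complex we have $(\alpha_0)_\ast\del=(d^X_0)_\ast\del=0$, so bifunctoriality of $\E$ gives $(\alpha_0)_\ast\big(f\uas\del\big)=f\uas\big((\alpha_0)_\ast\del\big)=0$ for every $f\in\C(W,A_{n+1})$, and $(\alpha_1)_\ast\circ(\alpha_0)_\ast=(\alpha_1\alpha_0)_\ast=0$ since $\alpha_1\alpha_0=0$. For exactness at $\E(-,A_0)$ I would fix $\epsilon\in\E(W,A_0)$ with $(\alpha_0)_\ast\epsilon=0$, realize it as $\s(\epsilon)=[Y_\bullet]$ with $Y_\bullet\colon A_0\xrightarrow{\beta_0}Y_1\to\cdots\to Y_n\xrightarrow{\beta_n}W\overset{\epsilon}{\dashrightarrow}$, and first record the $n$-exangulated analogue of a standard extriangulated fact (the $n=1$ case is in \cite{NP}; it is a consequence of the realization axioms and (EA1)--(EA2)): the vanishing $(\alpha_0)_\ast\epsilon=0$ is equivalent to $\alpha_0$ factoring through the inflation $\beta_0$, say $\alpha_0=b_1\beta_0$. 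Using the exact sequences $\C(W,A_i)\to\cdots\to\C(Y_1,A_i)\to\C(A_0,A_i)$ that Definition \ref{def1}(2) attaches to $Y_\bullet$, one lifts $\id_{A_0}$ one degree at a time to a chain map $b_\bullet=(\id_{A_0},b_1,\dots,b_{n+1})\colon Y_\bullet\to X_\bullet$: at the $j$-th step $\alpha_jb_j$ is killed by precomposition with $\beta_{j-1}$ (because $\alpha_jb_j\beta_{j-1}=\alpha_j\alpha_{j-1}b_{j-1}=0$) and so factors as $b_{j+1}\beta_j$. The subtle point is to arrange that $b_\bullet$ is a morphism of $\E$-attached complexes, equivalently $\epsilon=(\id_{A_0})_\ast\epsilon=(b_{n+1})\uas\del=\del\ssh(b_{n+1})$; as in the extriangulated case \cite{NP}, one ensures this by running the construction through a split distinguished $n$-exangle — realize $(\alpha_0)_\ast\epsilon=0$ by a split conflation and use the lift axiom (R0) (and the mapping-cone axiom (EA2) when $n>1$) to obtain a genuine morphism of $\E$-attached complexes out of a realization of $\epsilon$, whose last component delivers the desired $b_{n+1}$ with $(b_{n+1})\uas\del=\epsilon$. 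This places $\epsilon$ in the image of $\del\ssh$.

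Exactness at $\E(-,A_1)$ is the part that genuinely needs the octahedral-type axioms, since an $n$-exangle cannot be rotated to make $\alpha_1$ an inflation. Given $\epsilon\in\E(W,A_1)$ with $(\alpha_1)_\ast\epsilon=0$, I would realize $\epsilon$ by some $Z_\bullet\colon A_1\xrightarrow{\gamma_0}Z_1\to\cdots\to Z_n\to W\overset{\epsilon}{\dashrightarrow}$, factor $\alpha_1=c\gamma_0$ with $c\colon Z_1\to A_2$, pull $\del$ back along $c$, and feed $X_\bullet$ together with a realization of $c\uas\del$ into (EA2$\op$); the resulting good lift — whose mapping cocone is again a distinguished $n$-exangle — produces an extension $\epsilon'\in\E(W,A_0)$ and a morphism of extensions certifying $(\alpha_0)_\ast\epsilon'=\epsilon$, so $\epsilon$ lies in the image of $(\alpha_0)_\ast$. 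This is the $n$-exangulated analogue of how (ET4) is used to prove exactness at the middle $\E$-term of the six-term sequence of an extriangle.

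The step I expect to be the main obstacle is the same in both halves: converting a vanishing $a_\ast\epsilon=0$ into an explicit factorization through an inflation, and then handling the mapping-cone / (EA2$\op$) bookkeeping carefully enough to certify that the chain maps one writes down are morphisms of $\E$-attached complexes and hence carry the required identities of extensions. Exactness at $\E(-,A_1)$ (and dually at $\E(A_n,-)$) is the most demanding instance, as it does not reduce to the inflation case and must invoke (EA2) / (EA2$\op$) directly; everything else is Yoneda and bifunctor formalism on top of Definition \ref{def1}.
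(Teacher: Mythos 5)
You should first note that the paper does not actually prove this lemma: it imports it wholesale from \cite[Lemma 2.12]{LZ}, so your proposal has to be measured against the standard argument rather than an in-text proof. Your global architecture is sound: the second sequence does follow from the first by passing to $(\C\op,\E\op,\s\op)$, and the only new content beyond Definition \ref{def1}(1) is exactness at $\E(-,A_0)$ and $\E(-,A_1)$. At $\E(-,A_0)$ your argument is essentially right but over-engineered: the factorization $\alpha_0=b_1\beta_0$ is immediate from Definition \ref{def1}(2) applied to the realization $\langle Y_{\bullet},\epsilon\rangle$ (no (EA1)/(EA2) needed), and the ``subtle point'' you worry about is precisely Lemma \ref{a2} (\cite[Proposition 3.6]{HLN}), which the paper already quotes: the commutative square $b_1\beta_0=\alpha_0$ yields a morphism of $n$-exangles $(\id_{A_0},b_1,\dots,b_{n+1})\colon\langle Y_{\bullet},\epsilon\rangle\to\langle X_{\bullet},\delta\rangle$, and being a morphism of $\E$-attached complexes forces $\epsilon=(b_{n+1})^{\ast}\delta$. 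Your degreewise lifting followed by a ``split conflation plus (R0)/(EA2)'' repair is exactly an attempt to reprove that lemma and, as sketched, is not a proof; but this half is fixable by citation.

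The genuine gap is exactness at $\E(-,A_1)$, which you correctly single out as the crux but do not establish. The step ``pull $\delta$ back along $c$'' is type-incorrect: $c\colon Z_1\to A_2$ while $\delta\in\E(A_{n+1},A_0)$, so $c^{\ast}\delta$ is undefined; moreover (EA2$\op$) requires two distinguished $n$-exangles with the same last term whose extensions are related by a pushforward along a morphism of the first terms, and the pair you propose to feed into it does not have that shape even after repair. The standard argument (visible already at $n=1$, where exactness at the middle $\E$-term genuinely uses the octahedral axiom (ET4) of \cite{NP}) runs differently: realize $\epsilon$ by $\langle Z_{\bullet},\epsilon\rangle$ with first map $\gamma_0\colon A_1\to Z_1$, compose the inflations to get $\gamma_0\alpha_0$ by (EA1), realize it by some $\langle V_{\bullet},\theta\rangle$, and then use the octahedral-type compatibilities (derived from the good-lift/mapping-cone and cocone axioms, not just quoted) to produce a distinguished $n$-exangle ending in a deflation $e$ onto $W$ whose extension is $(\alpha_1)_{\ast}\epsilon$, together with an identity of the form $(\alpha_0)_{\ast}\theta=e^{\ast}\epsilon$; since $(\alpha_1)_{\ast}\epsilon=0$ that $n$-exangle splits, and if $s$ is a section of $e$ then $\epsilon':=s^{\ast}\theta$ satisfies $(\alpha_0)_{\ast}\epsilon'=s^{\ast}e^{\ast}\epsilon=\epsilon$. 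None of this bookkeeping is present in your sketch, and it is the substantive content of the lemma that \cite{LZ} supplies and the paper takes for granted.
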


\begin{lemma}\emph{\cite[Proposition 3.6]{HLN}}\label{a2}
\rm Let ${}_A\langle X_{\bullet},\delta\rangle_C$ and ${}_B\langle Y_{\bullet},\rho\rangle_D$ be distinguished $n$-exangles. Suppose that we are given a commutative square
$$\xymatrix{
 X_0 \ar[r]^{{d_0^X}} \ar@{}[dr]|{\circlearrowright} \ar[d]_{a} & X_1 \ar[d]^{b}\\
 Y_0  \ar[r]_{d_0^Y} &Y_1
}
$$
in $\C$. Then there is a morphism $f_{\bullet}\colon \langle X_{\bullet},\delta\rangle\to\langle Y_{\bullet},\rho\rangle$ which satisfies $f_0=a$ and $f_1=b$.
\end{lemma}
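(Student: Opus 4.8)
The plan is to build the desired morphism as a sum $f_{\bullet}=g_{\bullet}+s_{\bullet}$, where $g_{\bullet}$ is a genuine morphism of $n$-exangles produced by the realization axiom {\rm (R0)} and $s_{\bullet}$ is a null-homotopic correction that adjusts the first component to the prescribed value $b$ without disturbing the extension data. The point of this splitting is that {\rm (R0)} only controls the two \emph{end} components of a lift, so it cannot be applied directly to $(a,b)$; instead I first use it on a compatible end pair $(a,c)$ and then repair the middle.

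First I would manufacture the missing end component $c\colon X_{n+1}\to Y_{n+1}$. Since $\langle X_{\bullet},\delta\rangle$ lies in $\AE$ we have $(d_0^X)_{\ast}\delta=0$, so the given commutative square yields
$$(d_0^Y)_{\ast}(a_{\ast}\delta)=(d_0^Y\circ a)_{\ast}\delta=(b\circ d_0^X)_{\ast}\delta=b_{\ast}\big((d_0^X)_{\ast}\delta\big)=0.$$
Applying the first exact sequence of Lemma \ref{a1} for $\langle Y_{\bullet},\rho\rangle$, evaluated at $X_{n+1}$, the piece $\C(X_{n+1},Y_{n+1})\xrightarrow{\rho\ssh}\E(X_{n+1},Y_0)\xrightarrow{(d_0^Y)_{\ast}}\E(X_{n+1},Y_1)$ is exact, so $a_{\ast}\delta\in\ker(d_0^Y)_{\ast}=\operatorname{Im}\rho\ssh$. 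Hence there is $c$ with $c^{\ast}\rho=a_{\ast}\delta$, i.e. $(a,c)\colon\delta\to\rho$ is a morphism of extensions. By {\rm (R0)} it lifts to $g_{\bullet}=(a,g_1,\dots,g_n,c)$, which, being a morphism in $\AE$ between two $n$-exangles, is automatically a morphism of $n$-exangles with $g_0=a$.

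Next I would correct the first term. Both $b$ and $g_1$ complete the square over $a$, so $h:=b-g_1$ satisfies $h\circ d_0^X=b\circ d_0^X-g_1\circ d_0^X=d_0^Y a-d_0^Y a=0$. The second exact sequence of Lemma \ref{a1} for $\langle X_{\bullet},\delta\rangle$ evaluated at $Y_1$ is exact at $\C(X_1,Y_1)$, namely $\operatorname{Im}(-\circ d_1^X)=\ker(-\circ d_0^X)$, so $h=h_2\circ d_1^X$ for some $h_2\in\C(X_2,Y_1)$. I then take the homotopy $k_{\bullet}=(k_i\colon X_i\to Y_{i-1})$ with $k_2=h_2$ and all other $k_i=0$, and let $s_{\bullet}$ be the associated chain map $s_i=d^Y_{i-1}k_i+k_{i+1}d^X_i$. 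By construction $s_0=0$, $s_1=k_2\circ d_1^X=h$, and $s_{n+1}=d_n^Y k_{n+1}$; since $(d_n^Y)^{\ast}\rho=0$ this gives $(s_{n+1})^{\ast}\rho=k_{n+1}^{\ast}\big((d_n^Y)^{\ast}\rho\big)=0$, uniformly in $n$ (in particular for $n=1$, where $k_{n+1}=h_2$ is nonzero yet the pullback still vanishes).

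Finally I set $f_{\bullet}:=g_{\bullet}+s_{\bullet}$. It is a chain map with $f_0=a+0=a$ and $f_1=g_1+h=b$, and the extension compatibility holds because
$$(f_0)_{\ast}\delta=a_{\ast}\delta=c^{\ast}\rho=c^{\ast}\rho+(s_{n+1})^{\ast}\rho=(c+s_{n+1})^{\ast}\rho=(f_{n+1})^{\ast}\rho,$$
so $f_{\bullet}$ is a morphism in $\AE$ between $n$-exangles, hence a morphism of $n$-exangles with $f_0=a$, $f_1=b$, as required. I expect the extension compatibility to be the only delicate step: an arbitrary chain map extending $(a,b)$ (which one can build term by term from the exactness of Lemma \ref{a1} alone) need \emph{not} satisfy $(f_0)_{\ast}\delta=(f_{n+1})^{\ast}\rho$, and the entire design—peeling off a certified $(a,c)$-lift and keeping the remaining correction null-homotopic, hence invisible to $\rho\ssh$—is precisely what forces this identity.
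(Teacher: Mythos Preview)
Your proof is correct. The paper does not actually prove this lemma; it simply quotes it from \cite[Proposition 3.6]{HLN}, so there is nothing in the paper to compare your argument against. For what it is worth, your decomposition $f_{\bullet}=g_{\bullet}+s_{\bullet}$---first lifting a compatible end pair $(a,c)$ via {\rm (R0)} and then adding a null-homotopic correction to fix the degree-$1$ component---is exactly the strategy used in the original source, and your handling of the $n=1$ boundary case (where $k_{n+1}=h_2$ is nonzero but $(d_n^Y)^{\ast}\rho=0$ still kills the pullback) is the right way to make the argument uniform.
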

\begin{lemma}\rm\cite[Lemma 2.11 ]{HHZZ}\label{y1}
Let $\C$ be an $n$-exangulated category , and $$\xymatrix{
X_0\ar[r]^{f_0}\ar@{}[dr] \ar[d]^{a_0} &X_1 \ar[r]^{f_1} \ar@{}[dr]\ar[d]^{a_1}\ar@{-->}[dl]^{h_1} &X_2 \ar[r]^{f_2} \ar@{}[dr]\ar[d]^{a_2}\ar@{-->}[dl]^{h_2}&\cdot\cdot\cdot \ar[r]\ar@{}[dr] &X_n \ar[r]^{f_n} \ar@{}[dr]\ar[d]^{a_n}&X_{n+1} \ar@{}[dr]\ar[d]^{a_{n+1}} \ar@{-->}[dl]^{h_{n+1}}\ar@{-->}[r]^-{\delta} &\\
{Y_0}\ar[r]^{g_0} &{Y_1}\ar[r]^{g_1}&{Y_2} \ar[r]^{g_2} &\cdot\cdot\cdot \ar[r] &{Y _n}\ar[r]^{g_n}  &{Y_{n+1}} \ar@{-->}[r]^-{\eta} &}
$$
any morphism of distinguished $n$-exangles. Then the following are equivalent:
\begin{itemize}
\item[\rm (1)]There is a morphism $h_1\colon X_1\to Y_0$, such that $h_1f_0=a_0$.

\item[\rm (2)]There is a morphism $h_{n+1}\colon X_{n+1}\to Y_n$, such that $g_nh_{n+1}=a_{n+1}$.

\item[\rm (3)] $ (a_0)_{*}{\delta}=(a_{n+1})^{*}{\eta}=0$.

\item[\rm (4)] $a_{\bullet}=(a_0,a_1,\cdot\cdot\cdot,a_{n+1})\colon\Xd\to\langle Y_{\bullet},\eta\rangle$ is null-homotopic.

\end{itemize}
\end{lemma}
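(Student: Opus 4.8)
The plan is to prove $(4)\Rightarrow(1)\Rightarrow(3)\Rightarrow(4)$ together with $(2)\Leftrightarrow(3)$; only $(3)\Rightarrow(4)$ is substantial, everything else being a formal consequence of the long exact sequences of Lemma~\ref{a1} and of the notion of a morphism in $\AE$.

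First the short implications. $(4)\Rightarrow(1)$ is immediate, since the degree-$0$ part of a null-homotopy identity for $a_{\bullet}$ reads $h_1 f_0=a_0$. For $(1)\Rightarrow(3)$, if $h_1 f_0=a_0$ then
$$(a_0)_{*}\delta=(h_1)_{*}(f_0)_{*}\delta=(h_1)_{*}\bigl((d^X_0)_{*}\delta\bigr)=0,$$
because $\langle X_{\bullet},\delta\rangle$ is an $\E$-attached complex, and then $(a_{n+1})^{*}\eta=(a_0)_{*}\delta=0$ since $a_{\bullet}$ is a morphism in $\AE$. For $(3)\Rightarrow(1)$, note that $(a_0)_{*}\delta=\delta\ush_{Y_0}(a_0)$, so $(a_0)_{*}\delta=0$ together with exactness of $\C(X_1,Y_0)\xrightarrow{-\circ f_0}\C(X_0,Y_0)\xrightarrow{\delta\ush}\E(X_{n+1},Y_0)$ — the tail of the second exact sequence of Lemma~\ref{a1} for $\langle X_{\bullet},\delta\rangle$ — produces $h_1$ with $h_1 f_0=a_0$. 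Finally $(2)\Leftrightarrow(3)$ is the dual assertion, read off the first exact sequence of Lemma~\ref{a1} for $\langle Y_{\bullet},\eta\rangle$ and the identity $(d^Y_n)^{*}\eta=0$.

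For $(3)\Rightarrow(4)$ I would build a null-homotopy by progressively shrinking the support of $a_{\bullet}$. Using $(3)\Rightarrow(1)$, choose $h_1$ with $h_1 f_0=a_0$ and subtract from $a_{\bullet}$ the boundary of the partial homotopy supported at $h_1$; the homotopy class is unchanged, so we may assume $a_0=0$. Inductively, once $a_0=\dots=a_{i-1}=0$ with $1\le i\le n$, the chain-map identity gives $a_i f_{i-1}=g_{i-1}a_{i-1}=0$, so $a_i$ lies in the kernel of $-\circ f_{i-1}$ on $\C(X_i,Y_i)$; by the second exact sequence of Lemma~\ref{a1} (for $\langle X_{\bullet},\delta\rangle$ with target $Y_i$) this kernel equals $\Im(-\circ f_i)$, whence $a_i=h_{i+1}f_i$, and subtracting the boundary of the partial homotopy supported at $h_{i+1}$ yields $a_0=\dots=a_i=0$. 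After $n$ such steps $a_{\bullet}$ has the form $(0,\dots,0,a_{n+1})$, and then $(a_{n+1})^{*}\eta=(a_0)_{*}\delta=0$ gives, through the first exact sequence of Lemma~\ref{a1} for $\langle Y_{\bullet},\eta\rangle$, a factorization $a_{n+1}=g_n h_{n+1}$.

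What remains — and what I expect to be the main obstacle — is to upgrade the single relation $a_{n+1}=g_n h_{n+1}$ to a genuine null-homotopy, i.e., to choose $h_{n+1},h_n,\dots,h_1$ so that $g_{i-1}h_i+h_{i+1}f_i=0$ for all $1\le i\le n$ and also $h_1 f_0=0$. Propagating corrections downward from $h_{n+1}$ one can arrange each intermediate identity using the exactness of Lemma~\ref{a1}; the difficulty is the final identity $h_1 f_0=0$, which lives at the extreme end of the complex, where the $\Hom$- and $\E$-long exact sequences provide no injectivity and the naive induction stalls. This is precisely the point at which one must exploit the $n$-exangulated structure itself, beyond its exact sequences: I would complete the argument using the existence of (good) lifts — axioms {\rm (R0)} and {\rm (EA2}$\op${\rm )}, equivalently the filling property of Lemma~\ref{a2} — applied to the zero morphism of extensions $\delta\to\eta$, producing a lift that agrees with the reduced $a_{\bullet}$ in degree $n+1$; comparing it with $a_{\bullet}$ exhibits the latter as a boundary, hence null-homotopic. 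An alternative would be an induction on $n$, reducing to the extriangulated case $n=1$ by splicing. In any event the reductions preceding this last step are routine diagram chases, and the genuinely structural work is concentrated in closing up the homotopy at the top of the complex.
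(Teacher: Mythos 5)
Your treatment of the easy implications is correct and is the standard one: $(4)\Rightarrow(1)$ is read off the degree-$0$ identity of a homotopy, $(1)\Leftrightarrow(3)$ and $(2)\Leftrightarrow(3)$ follow from the two exact sequences of Lemma \ref{a1} together with $(d_0^X)_*\delta=0$, $(d_n^Y)^*\eta=0$ and the relation $(a_0)_*\delta=(a_{n+1})^*\eta$. (Note the present paper does not prove the lemma at all; it quotes it from [HHZZ, Lemma 2.11], so the comparison has to be with that argument.)

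The genuine gap is exactly at the point you flag yourself: the implication $(3)\Rightarrow(4)$ is not proved. Your reduction produces $h_1,\dots,h_{n+1}$ satisfying every homotopy identity except the one at the far end, and this is not a removable blemish of bookkeeping: a null-homotopy in the sense of HLN requires the two boundary identities $a_0=h_1f_0$ and $a_{n+1}=g_nh_{n+1}$ \emph{simultaneously}, and the long exact sequences of Lemma \ref{a1} give exactness only in the middle degrees, never injectivity of $-\circ f_n$ or of $g_0\circ-$ at the ends. (In the exact or $n$-exact case the last identity is free because $f_n$ is an epimorphism; in a triangulated or general $n$-exangulated category it is not, which is why this step carries real content.) Your proposed completion does not close this: applying {\rm (R0)}/Lemma \ref{a2} to the morphism of extensions $(0,u)$ (or to the factorization of $(a_0,a_{n+1})$ through the split extension) produces \emph{some} lift $m_{\bullet}$ with the prescribed end components, but the claim that "comparing it with $a_{\bullet}$ exhibits the latter as a boundary" is unjustified: two lifts of the same morphism of extensions need not be homotopic relative to the end terms, and their difference is precisely a chain map vanishing in degrees $0$ and $n+1$, for which the assertion "null-homotopic rel ends" is again the very statement one is trying to prove — running your downward propagation on it stalls at $h_1f_0=0$ in the same way. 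So the argument is circular at its crucial point, and the alternative suggestion (induction on $n$, splicing down to $n=1$) inherits the identical difficulty in the base case. To repair the proof you must genuinely use more than the two Hom-$\E$ exact sequences — e.g.\ a careful simultaneous choice of $h_{n+1}$ and $h_n$ (adjusting $h_n$ within $\ker(g_{n-1}\circ-)$-type gauge freedom while controlling the degree-$(n-1)$ identity), or the good-lift/mapping-cone axioms used in an explicit way — which is what the cited proof of [HHZZ, Lemma 2.11] has to supply and what is missing here.
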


\begin{corollary}\rm\cite[Corollary 2.12 ]{HHZZ}\label{y2}
If $a_{\bullet}$ is the identity on $\Xd$ as above,  then the following are equivalent:
\begin{itemize}
\item[\rm (1)] $f_0$ is a split monomorphism (also known as a section).

\item[\rm (2)] $f_n$ is a split epimorphism (also known as a retraction).

\item[\rm (3)] $ {\delta}=0$.

\item[\rm (4)] $a_{\bullet}$ is null-homotopic.

\end{itemize}
If a distinguished $n$-exangle satisfies one of the above equivalent conditions, it is called \emph{split}.
\end{corollary}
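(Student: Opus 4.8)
The plan is to read the statement as the specialization of Lemma~\ref{y1} to the identity morphism, and then to translate each of the four equivalent conditions of that lemma into the statement at hand. First I would let the source and target distinguished $n$-exangles in Lemma~\ref{y1} coincide, putting $\langle Y_{\bullet},\eta\rangle=\Xd$ and $a_{\bullet}=\id_{X_{\bullet}}$. This $a_{\bullet}$ is a genuine morphism of $n$-exangles, since the compatibility relation $(a_0)_{\ast}\del=(a_{n+1})^{\ast}\eta$ reduces to the tautology $\del=\del$ when $a_0=a_{n+1}=\id$ and $\eta=\del$. Hence Lemma~\ref{y1} applies directly, and its conclusion is exactly the four-fold equivalence we want.

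Carrying out the translation, with $a_{\bullet}=\id$ we have $g_i=f_i=d_i^X$, $a_0=\id_{X_0}$, $a_{n+1}=\id_{X_{n+1}}$ and $\eta=\del$. Condition~$(1)$ of Lemma~\ref{y1}, the existence of $h_1\colon X_1\to X_0$ with $h_1d_0^X=\id_{X_0}$, says precisely that $f_0=d_0^X$ is a split monomorphism; condition~$(2)$, the existence of $h_{n+1}\colon X_{n+1}\to X_n$ with $d_n^Xh_{n+1}=\id_{X_{n+1}}$, says precisely that $f_n=d_n^X$ is a split epimorphism; condition~$(3)$, namely $(a_0)_{\ast}\del=(a_{n+1})^{\ast}\eta=0$, collapses to $\del=0$; and condition~$(4)$ is the null-homotopy of $\id_{X_{\bullet}}$. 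These match conditions $(1)$--$(4)$ of the corollary one for one, so the equivalences follow at once from Lemma~\ref{y1}.

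If a more self-contained treatment is wanted, the equivalence $(1)\Leftrightarrow(2)\Leftrightarrow(3)$ can be extracted directly from the exactness built into Definition~\ref{def1} together with the defining relations $(d_0^X)_{\ast}\del=0$ and $(d_n^X)^{\ast}\del=0$. For $(3)\Rightarrow(2)$ I would note that $\del=0$ gives $\del\ssh=0$, so exactness of $\C(-,X_n)\xrightarrow{\C(-,d_n^X)}\C(-,X_{n+1})\xrightarrow{\del\ssh}\E(-,X_0)$ makes $\C(X_{n+1},d_n^X)$ surjective at $X_{n+1}$, whence $\id_{X_{n+1}}$ lifts through $d_n^X$; dually $\del\ush=0$ and the second exact sequence of Definition~\ref{def1} split $d_0^X$, giving $(3)\Rightarrow(1)$. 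Conversely, for $(1)\Rightarrow(3)$ I would apply the functor $\E(X_{n+1},h_1)$ to the relation $(d_0^X)_{\ast}\del=0$ and use $\E(X_{n+1},h_1)\big((d_0^X)_{\ast}\del\big)=\E(X_{n+1},h_1d_0^X)(\del)=\del$ to conclude $\del=0$; the implication $(2)\Rightarrow(3)$ is dual, applying $\E(h_{n+1},X_0)$ to $(d_n^X)^{\ast}\del=0$.

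I do not expect a genuine obstacle. The only substantive content is the passage between $\del=0$ and the null-homotopy of the identity in condition~$(4)$, which is precisely what Lemma~\ref{y1} already delivers; indeed one direction of it is visible by hand, since a null-homotopy of $\id_{X_{\bullet}}$ yields contracting operators whose end components retract $d_0^X$ and section $d_n^X$. Otherwise the argument is pure bookkeeping, the one point to watch being the collapse of the two rows of the diagram in Lemma~\ref{y1} into a single $n$-exangle, so that $g_n$ is identified with $f_n$ and $\eta$ with $\del$. The closing sentence of the corollary, which names an $n$-exangle satisfying these conditions \emph{split}, is a definition and requires no proof.
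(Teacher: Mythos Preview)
Your proposal is correct and is exactly the intended derivation: the paper states this result without proof, citing it as an immediate corollary of Lemma~\ref{y1}, and your specialization of that lemma to $a_{\bullet}=\id_{X_{\bullet}}$ with $\langle Y_{\bullet},\eta\rangle=\Xd$ is precisely the one-line argument this labeling implies. The additional self-contained verification of $(1)\Leftrightarrow(2)\Leftrightarrow(3)$ via Definition~\ref{def1} is sound but unnecessary here.
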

\begin{definition}\label{def2}\cite[Definition 3.14 ]{ZW}and \cite[Definition 3.2]{LZ}
Let $(\C,\E,\s)$ be an $n$-exangulated category. An object $P\in\C$ is called \emph{projective} if, for any distinguished $n$-exangle
$$A_0\xrightarrow{\alpha_0}A_1\xrightarrow{\alpha_1}A_2\xrightarrow{\alpha_2}\cdots\xrightarrow{\alpha_{n-2}}A_{n-1}
\xrightarrow{\alpha_{n-1}}A_n\xrightarrow{\alpha_n}A_{n+1}\overset{\delta}{\dashrightarrow}$$
and any morphism $c$ in $\C(P,A_{n+1})$, there exists a morphism $b\in\C(P,A_n)$ satisfying $\alpha_n\circ b=c$.
We denote the full subcategory of projective objects in $\C$ by $\P$.
Dually, the full subcategory of injective objects in $\C$ is denoted by $\I$.
\end{definition}

\begin{lemma}\rm\label{def2}\cite[Lemma 3.4 ]{LZ} Let $(\C,\E,\s)$ be an $n$-exangulated category. Then the following statements are equivalent for an object $P\in\C$.
\begin{itemize}
\item[\rm (1)] $\E(P,A)=0$ for any $A\in\C$.
\item[\rm (2)] $P$ is projective.
\item[\rm (3)] Any distinguished $n$-exangle
$A_0\xrightarrow{\alpha_0}A_1\xrightarrow{\alpha_1}A_2\xrightarrow{\alpha_2}\cdots\xrightarrow{\alpha_{n-2}}A_{n-1}
\xrightarrow{\alpha_{n-1}}A_n\xrightarrow{\alpha_n}P\overset{\delta}{\dashrightarrow}$ splits.
\end{itemize}
\end{lemma}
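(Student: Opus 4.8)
The plan is to prove the three conditions equivalent by the cyclic chain $(1)\Rightarrow(2)\Rightarrow(3)\Rightarrow(1)$, feeding in each step a distinguished $n$-exangle into one of the homological tools recorded above. For $(1)\Rightarrow(2)$ I would take an arbitrary distinguished $n$-exangle $A_0\to\cdots\to A_n\xrightarrow{\alpha_n}A_{n+1}\overset{\delta}{\dashrightarrow}$ together with a morphism $c\in\C(P,A_{n+1})$, and apply the first long exact sequence of Lemma \ref{a1} with the contravariant argument evaluated at $P$. This produces the exact sequence $\C(P,A_n)\xrightarrow{\C(P,\alpha_n)}\C(P,A_{n+1})\xrightarrow{\delta\ssh}\E(P,A_0)$. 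The hypothesis $\E(P,-)=0$ gives in particular $\E(P,A_0)=0$, so exactness at $\C(P,A_{n+1})$ forces $\C(P,\alpha_n)$ to be surjective; pulling $c$ back yields $b\in\C(P,A_n)$ with $\alpha_n b=c$, which is exactly the lifting property defining projectivity.

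For $(2)\Rightarrow(3)$ I would start from any distinguished $n$-exangle ending at $P$, so that $A_{n+1}=P$ and $\alpha_n\colon A_n\to P$ is the final deflation, and apply projectivity to the identity $\id_P\in\C(P,P)$. This gives $b\colon P\to A_n$ with $\alpha_n b=\id_P$, i.e.\ $\alpha_n$ is a split epimorphism. Reading $\alpha_n$ as the map $f_n=d_n^X$ in Corollary \ref{y2} (applied to the identity endomorphism of the $n$-exangle), the retraction property of $f_n$ is equivalent to $\delta=0$, which is precisely the assertion that the $n$-exangle splits.

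For $(3)\Rightarrow(1)$ I would fix an object $A$ and an arbitrary extension $\delta\in\E(P,A)$, and realize it via the exact realization $\s$ as a distinguished $n$-exangle with $A_0=A$ and $A_{n+1}=P$; such a realization exists by the axioms governing $\s$. Hypothesis $(3)$ says this $n$-exangle splits, and Corollary \ref{y2} identifies splitting with $\delta=0$. Since $\delta$ and $A$ were arbitrary, $\E(P,A)=0$ for all $A$, closing the cycle.

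None of the three steps is genuinely difficult, so the real work is bookkeeping. The two points I would watch are: in $(1)\Rightarrow(2)$, invoking exactness of Lemma \ref{a1} at the correct spot, namely at $\C(P,A_{n+1})$, so that vanishing of the single group $\E(P,A_0)$ already suffices and the full strength of $(1)$ is not needed locally; and in $(3)\Rightarrow(1)$, the fact that every element of $\E(P,A)$ is realized by an honest distinguished $n$-exangle, which is exactly what the exact realization $\s$ provides and which is what converts the categorical splitting hypothesis back into vanishing of the bifunctor.
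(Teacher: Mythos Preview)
Your cyclic proof $(1)\Rightarrow(2)\Rightarrow(3)\Rightarrow(1)$ is correct: each implication invokes exactly the right tool (exactness of $\C(P,A_n)\to\C(P,A_{n+1})\to\E(P,A_0)$ from Definition~\ref{def1} or Lemma~\ref{a1}, projectivity applied to $\id_P$, and the splitting criterion of Corollary~\ref{y2}). The paper itself supplies no proof here---the lemma is simply quoted from \cite[Lemma~3.4]{LZ}---so there is nothing to compare against, but your argument is the standard one and is what one would expect to find in the cited reference.
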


We denote by ${\rm rad}_{\C}$ the Jacobson radical of $\C$. Namely, ${\rm rad}_{\C}$ is an ideal of $\C$ such that ${\rm rad}_{\C}(A, A)$
coincides with the Jacobson radical of the endomorphism ring ${\rm End}(A)$ for any $A\in\C$.

\begin{definition}\cite[Definition 3.3 ]{HZ} When $n\geq2$, a distinguished $n$-exangle in $\C$ of the form
$$A_{\bullet}:~~A_0\xrightarrow{\alpha_0}A_1\xrightarrow{\alpha_1}A_2\xrightarrow{\alpha_2}\cdots\xrightarrow{\alpha_{n-2}}A_{n-1}
\xrightarrow{\alpha_{n-1}}A_n\xrightarrow{\alpha_n}A_{n+1}\overset{}{\dashrightarrow}$$
is minimal if $\alpha_1,\alpha_2,\cdots,\alpha_{n-1}$ are in $\rad_{\C}$.

\end{definition}
The following lemma shows that a distinguished $n$-exangle in an equivalence class can be chosen in a minimal way in a Krull-Schmidt $n$-exangulated category.

\begin{lemma}\rm\label{ml}\cite[Lemma 3.4 ]{HZ} Let $\C$ be a Krull-Schmidt $n$-exangulated category, $A_0,A_{n+1}\in\C$. Then for every equivalence class associated with $\E$-extension $\del={}_{A_0}\del_{A_{n+1}}$, there exists a representation
$$A_{\bullet}:~~A_0\xrightarrow{\alpha_0}A_1\xrightarrow{\alpha_1}A_2\xrightarrow{\alpha_2}\cdots\xrightarrow{\alpha_{n-2}}A_{n-1}
\xrightarrow{\alpha_{n-1}}A_n\xrightarrow{\alpha_n}A_{n+1}\overset{\delta}{\dashrightarrow}$$
such that $\alpha_1,\alpha_2,\cdots,\alpha_{n-1}$ are in $\rad_{\C}$. Moreover, $A_{\bullet}$ is a direct summand of every other elements in this equivalent class.

\end{lemma}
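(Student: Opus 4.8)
The plan is to produce $A_{\bullet}$ from an arbitrary representative of the class by repeatedly stripping off contractible subcomplexes supported in two adjacent inner degrees, and then to deduce the direct-summand claim from the minimality so obtained. First I would take any $X_{\bullet}$ realising $\del$; automatically $X_0=A_0$ and $X_{n+1}=A_{n+1}$, and I argue by induction on the (finite, since $\C$ is Krull--Schmidt) number of indecomposable summands in $X_1\oplus\cdots\oplus X_n$. If $d^X_i\notin\rad_{\C}$ for some $1\le i\le n-1$, then --- as $\C$ is Krull--Schmidt --- after composing with suitable automorphisms of $X_i$ and $X_{i+1}$, which by $1\le i\le n-1$ leave the end terms untouched, we may assume $X_i=X_i'\oplus Z$ and $X_{i+1}=X_{i+1}'\oplus Z'$ with $Z$ a nonzero object, $Z'\cong Z$, and $d^X_i$ block-diagonal with blocks $d'$ and an isomorphism $\gamma\colon Z\to Z'$. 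The relations $d^X_{i+1}d^X_i=0=d^X_id^X_{i-1}$ force the $Z$-component of $d^X_{i-1}$ and the $Z'$-component of $d^X_{i+1}$ to vanish, so in $\CC$ one has $X_{\bullet}\cong X'_{\bullet}\oplus E_{\bullet}$, where $E_{\bullet}$ is the contractible complex $0\to\cdots\to Z\xrightarrow{\gamma}Z'\to\cdots\to0$ in degrees $i,i+1$ and $X'_{\bullet}$ has the same end terms as $X_{\bullet}$. Since adding a contractible complex does not change the homotopy class, and since $(d^{X'}_0)_{\ast}\del$ and $(d^{X'}_n)^{\ast}\del$ arise from the (vanishing) corresponding extensions for $X_{\bullet}$ by projecting onto a summand, $\langle X'_{\bullet},\del\rangle$ is again a distinguished $n$-exangle with $\s(\del)=[X'_{\bullet}]$; conjugating by automorphisms and then restricting to a summand preserves membership in $\rad_{\C}$ for the remaining differentials, while the induction parameter has strictly decreased. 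After finitely many steps we reach a minimal $A_{\bullet}$ with $\s(\del)=[A_{\bullet}]$.

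For the "moreover" part, running the same stripping on an arbitrary $Y_{\bullet}\in\s(\del)$ yields $Y_{\bullet}\cong A'_{\bullet}\oplus C_{\bullet}$ in $\CC$ with $A'_{\bullet}$ minimal and $C_{\bullet}$ a finite direct sum of contractible two-term complexes in inner degrees (so $C_0=C_{n+1}=0$), and this isomorphism is the identity on the end terms and compatible with $\del$. Hence it suffices to show that a minimal representative is unique up to isomorphism, for then $A_{\bullet}\cong A'_{\bullet}$ is a direct summand of $Y_{\bullet}$ as desired. For this I would take a homotopy equivalence $(f_{\bullet},g_{\bullet})$ between minimal representatives $A_{\bullet}$ and $A'_{\bullet}$ which is compatible with $\del$ and is the identity on the end terms, and write $g_{\bullet}f_{\bullet}=\id_{A_{\bullet}}-(d^Ah+hd^A)$ for a homotopy $h=(h_j\colon A_j\to A_{j-1})$; the degree-$0$ and degree-$(n+1)$ components of this equation give $h_1d^A_0=0$ and $d^A_nh_{n+1}=0$. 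Using the exact sequences attached to $A_{\bullet}$ in Definition \ref{def1} (equivalently Lemma \ref{a1}), I would replace $h$ by a homotopy vanishing in degrees $1$ and $n+1$: one solves, degree by degree, for $k=(k_j\colon A_j\to A_{j-1})$ with $d^A_{j-1}k_j+k_{j+1}d^A_j=0$ for all $j$, $k_1=-h_1$ and $k_{n+1}=-h_{n+1}$, each stage reducing to the vanishing of a composite that one verifies from the chain relations together with $h_1d^A_0=0$ and $d^A_nh_{n+1}=0$. With such a homotopy, $(g_{\bullet}f_{\bullet})_j=\id_{A_j}$ for $j\in\{0,n+1\}$ and equals $\id_{A_j}$ minus a morphism in $\rad_{\C}(A_j,A_j)$ for every inner $j$ (the differential appearing in it is one of the radical maps $\alpha_1,\dots,\alpha_{n-1}$, the remaining term having been killed), so $g_{\bullet}f_{\bullet}$ is an automorphism of $A_{\bullet}$; symmetrically $f_{\bullet}g_{\bullet}$ is an automorphism of $A'_{\bullet}$, whence $f_{\bullet}$ is an isomorphism $A_{\bullet}\xrightarrow{\sim}A'_{\bullet}$ and the claim follows.

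The main obstacle is precisely this homotopy surgery. The easy half of a "minimal complex is a direct summand" argument --- the fact that the identity minus a radical endomorphism is invertible --- applies at once to the inner degrees $2,\dots,n-1$, but at the two degrees $1$ and $n$ adjacent to the fixed end terms the outer differentials $\alpha_0$ and $\alpha_n$ need not lie in $\rad_{\C}$, so one genuinely has to engineer a homotopy that is trivial there and check that the successive solvability conditions, together with the two boundary equations, are all forced by the exactness inherent in an $n$-exangle. Everything else is routine bookkeeping with the $n$-exangulated axioms and with Krull--Schmidt decompositions.
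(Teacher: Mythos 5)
Your argument cannot be compared with an in-paper proof, because the paper does not prove this lemma: it is quoted verbatim from \cite[Lemma 3.4]{HZ}. Judged on its own, your two-step plan is the standard one and is essentially sound. The first half (if some inner differential $\alpha_i\notin\rad_{\C}$, $1\le i\le n-1$, use the Krull--Schmidt property to rewrite $\alpha_i$ as $\mathrm{diag}(d',\gamma)$ with $\gamma$ an isomorphism, observe that $\alpha_{i+1}\alpha_i=0=\alpha_i\alpha_{i-1}$ kills the adjacent components, split off the contractible two-term summand without touching the end terms, and induct on the number of indecomposable middle summands) is complete and correct, and correctly keeps the extension $\delta$ and its distinguished $n$-exangle structure. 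The boundary identities $h_1\alpha_0=0$ and $\alpha_nh_{n+1}=0$ you extract from the degree-$0$ and degree-$(n+1)$ homotopy equations are also right, given that the equivalence class is taken with identity end components.

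The one genuine weak point is the homotopy surgery itself. You prescribe $k_1=-h_1$ and $k_{n+1}=-h_{n+1}$ and propose to solve $\alpha_{j-1}k_j+k_{j+1}\alpha_j=0$ \emph{degree by degree}; marching from one end, each new $k_{j+1}$ does exist by exactness (Lemma~\ref{a1}), but the last equation $\alpha_{n-1}k_n+k_{n+1}\alpha_n=0$ then constrains two data that are already fixed, namely the lift $k_n$ you happened to choose and the prescribed $k_{n+1}=-h_{n+1}$; since lifts are only unique up to morphisms annihilated by the relevant differential, nothing in your verification makes them match, so as written the construction is over-determined at the meeting point. The repair is to build $k$ from the two ends rather than across: from $h_1\alpha_0=0$ and exactness of $\C(A_2,A_0)\xrightarrow{-\circ\alpha_1}\C(A_1,A_0)\xrightarrow{-\circ\alpha_0}\C(A_0,A_0)$ one gets $h_1=v\circ\alpha_1$ for some $v\colon A_2\to A_0$; dually $\alpha_nh_{n+1}=0$ gives $h_{n+1}=\alpha_{n-1}\circ w$ for some $w\colon A_{n+1}\to A_{n-1}$. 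Then $k_1=-h_1$, $k_2=\alpha_0 v$, $k_n=w\alpha_n$ (these two corrections land in the same degree when $n=2$, which is harmless), $k_{n+1}=-h_{n+1}$ and $k_j=0$ otherwise satisfies all the identities you need, every check reducing to $\alpha_{j+1}\alpha_j=0$; with the corrected homotopy vanishing in degrees $1$ and $n+1$, your radical argument closes. Two further remarks: you can drop the uniqueness-of-minimal detour and the second stripping of $Y_{\bullet}$ altogether, since the same computation applied to a homotopy equivalence $f_{\bullet}\colon A_{\bullet}\to Y_{\bullet}$, $g_{\bullet}\colon Y_{\bullet}\to A_{\bullet}$ with $g_{\bullet}f_{\bullet}\sim\id_{A_{\bullet}}$ uses only the minimality of $A_{\bullet}$ and shows $g_{\bullet}f_{\bullet}$ is an automorphism, so $f_{\bullet}$ is a split monomorphism and $A_{\bullet}$ is a direct summand of $Y_{\bullet}$ at once; and your argument tacitly assumes $n\ge2$ (for $n=1$ both differentials adjacent to the problematic degree are the outer, non-radical ones), the $n=1$ case being instead the known extriangulated fact that realizations of the same $\delta$ with identity end terms are isomorphic.
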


\section{An equivalence between two stable subcategories}
Let $k$ is a commutative artinian ring. From now on, we always assume $(\C,\E,\s)$ is an $\Ext$-finite, Krull-Schmidt and $k$-linear $n$-exangulated category.

\subsection{Two subcategories  $\C_r$ and $\C_l$}
\begin{definition}\label{De1}
A morphism $f:X \rightarrow Y$ in $\C$ is called \emph{n-projectively trivial} if for each $Z\in\C$, the induced map $\E(f,Z):\E(Y,Z)\rightarrow\E(X,Z)$ is zero. Dually, a morphism $g:X \rightarrow Y$ in $\C$ is called \emph{n-injectively trivial} if for each $Z\in\C$, the induced map $\E(Z,g):\E(Z,X)\rightarrow\E(Z,Y)$ is zero.
\end{definition}

\begin{lemma}\label{Le0}
Let $f:A \rightarrow B$  be a morphism in $\C$. Then the following statements are equivalent.
\begin{itemize}
  \item[$(1)$] $f$ is {n-projectively trivial}.
  \item[$(2)$] $f$ factors through any deflation $g\colon X_n\to B$.
  \item[$(3)$] For any distinguished $n$-exangle $X_{\bullet}:X_0\xrightarrow{\alpha_0}X_1\xrightarrow{\alpha_1}X_2\xrightarrow{\alpha_2}\cdots\xrightarrow{\alpha_{n-1}}X_n\xrightarrow{g}B\overset{\delta}{\dashrightarrow}$,
if there exists a morphism of distinguished $n$-exangles as follows
 \begin{equation}\label{A}
\begin{array}{l}
\xymatrix{
X_{\bullet}^{\prime}:&X_0^{\prime}\ar[r]^{\alpha_0^{\prime}}\ar@{}[dr] \ar@{=}[d]^{} &X_1^{\prime}\ar[r]^{\alpha_1^{\prime}} \ar@{}[dr]\ar[d]^{\varphi_1}&X_2^{\prime} \ar[r]^{\alpha_2^{\prime}} \ar@{}[dr]\ar[d]^{\varphi_2}&\cdot\cdot\cdot \ar[r]^{\alpha_{n-1}^{\prime} }\ar@{}[dr] &X_n^{\prime} \ar[r]^{g^{\prime}} \ar@{}[dr]\ar[d]^{\varphi_n}&A \ar@{}[dr]\ar[d]^{f} \ar@{-->}[r]^-{f\uas\del} &\\
X_{\bullet}:&{X_0}\ar[r]^{\alpha_0} &{X_1}\ar[r]^{\alpha_1}&{X_2} \ar[r]^{\alpha_2} &\cdot\cdot\cdot \ar[r]^{\alpha_{n-1}} &{X_n}\ar[r]^{g}  &{B} \ar@{-->}[r]^-{\delta} &,}
\end{array}
\end{equation}
then the top distinguished $n$-exangle $X_{\bullet}^{\prime}$ is split.
\end{itemize}
\end{lemma}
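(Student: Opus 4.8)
The plan is to prove the cyclic chain of implications $(1)\Rightarrow(2)\Rightarrow(3)\Rightarrow(1)$, since this is the most economical route and each step has a natural mechanism in the $n$-exangulated setting.

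For $(1)\Rightarrow(2)$: suppose $f\colon A\to B$ is $n$-projectively trivial and let $g\colon X_n\to B$ be any deflation, completing to a distinguished $n$-exangle $X_0\xrightarrow{\alpha_0}\cdots\xrightarrow{\alpha_{n-1}}X_n\xrightarrow{g}B\overset{\delta}{\dashrightarrow}$ with $\delta\in\E(B,X_0)$. By Lemma \ref{a1} applied to this $n$-exangle, the sequence $\C(A,X_n)\xrightarrow{\C(A,g)}\C(A,B)\xrightarrow{\delta\ssh}\E(A,X_0)$ is exact. Now $\delta\ssh(f)=f\uas\delta=\E(f,X_0)(\delta)=0$ because $f$ is $n$-projectively trivial (taking $Z=X_0$ in Definition \ref{De1}). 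Exactness then gives $b\in\C(A,X_n)$ with $g\circ b=f$, which is exactly statement $(2)$.

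For $(2)\Rightarrow(3)$: given the morphism of distinguished $n$-exangles \eqref{A}, the first column is $\id_{X_0'}=\id_{X_0}$ (since $X_0'=X_0$, as the bottom row realizes $\delta$ and the top row realizes $f\uas\delta$, and the pullback keeps the first term). Wait — more carefully, the top $n$-exangle realizes $f\uas\delta\in\E(A,X_0)$, so its first term is $X_0$ and $\varphi_0=\id_{X_0}$. We want to show $X_\bullet'$ is split, i.e. by Corollary \ref{y2} that $f\uas\delta=0$. Hypothesis $(2)$ gives a factorization $f=g\circ b$ for some $b\colon A\to X_n$. Then $f\uas\delta=(g\circ b)\uas\delta=b\uas(g\uas\delta)=b\uas\big((d_n^X)\uas\delta\big)$, and since $X_\bullet$ is an $n$-exangle (in particular an object of $\AE$) we have $(d_n^X)\uas\delta=0$; hence $f\uas\delta=0$ and $X_\bullet'$ splits by Corollary \ref{y2}.

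For $(3)\Rightarrow(1)$: let $Z\in\C$ be arbitrary and pick any $\delta\in\E(B,Z)$; we must show $\E(f,Z)(\delta)=f\uas\delta=0$ in $\E(A,Z)$. Realize $\delta$ by a distinguished $n$-exangle $Z=X_0\xrightarrow{\alpha_0}X_1\to\cdots\to X_n\xrightarrow{g}B\overset{\delta}{\dashrightarrow}$; here $g$ is a deflation. Pull back along $f$: by Lemma \ref{a2} (or the pullback construction of \cite{HLN}) there is a morphism of distinguished $n$-exangles of the form \eqref{A} whose top row realizes $f\uas\delta\in\E(A,X_0)=\E(A,Z)$ and whose first vertical map is $\id_{X_0}$. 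By hypothesis $(3)$ the top row $X_\bullet'$ is split, so by Corollary \ref{y2} its defining extension vanishes: $f\uas\delta=0$. As $Z$ and $\delta$ were arbitrary, $f$ is $n$-projectively trivial.

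The main obstacle I anticipate is the bookkeeping in $(3)\Rightarrow(1)$: one must be sure that a morphism of distinguished $n$-exangles of the precise shape in \eqref{A} — with identity in the zeroth component and realizing $f\uas\delta$ on top — actually exists for an arbitrary $\delta\in\E(B,Z)$. This is not quite a one-line citation: it requires invoking the pullback of a distinguished $n$-exangle along $f$ (using that the realization $\s$ is exact, together with (R0)/Lemma \ref{a2}), and then checking that the resulting lift has $\varphi_0=\id$. Everything else reduces to the exact sequences of Lemma \ref{a1}, the defining identity $(d_n^X)\uas\delta=0$ for objects of $\AE$, functoriality $(g\circ b)\uas=b\uas\circ g\uas$, and the split criterion of Corollary \ref{y2}.
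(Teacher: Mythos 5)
Your proof is correct, but it is organized differently from the paper's. The paper treats $(1)\Leftrightarrow(3)$ as immediate from the definition (the top row of \eqref{A} realizes $f\uas\del$, and split is equivalent to the extension vanishing by Corollary \ref{y2}), then proves $(3)\Rightarrow(2)$ by noting $g'$ is a split epimorphism and composing, $f=g\circ(\varphi_n\circ g'')$, and $(2)\Rightarrow(3)$ via Lemma \ref{y1}. You instead run the cycle $(1)\Rightarrow(2)\Rightarrow(3)\Rightarrow(1)$: your $(1)\Rightarrow(2)$ uses the long exact sequence of Lemma \ref{a1} (exactness at $\C(A,B)$ plus $\del\ssh(f)=f\uas\del=0$), which replaces the paper's split-epimorphism argument and yields the factorization just as directly; your $(2)\Rightarrow(3)$ is a one-line computation $f\uas\del=b\uas(g\uas\del)=0$ using only the defining condition $(d_n^X)\uas\del=0$ of an $\E$-attached complex, which is a bit more elementary than the paper's appeal to Lemma \ref{y1}; and your $(3)\Rightarrow(1)$ makes explicit what the paper calls ``clear,'' namely realizing an arbitrary $\del\in\E(B,Z)$, lifting the morphism of extensions $(\id_{X_0},f)\colon f\uas\del\to\del$, and invoking Corollary \ref{y2}. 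One small citation point: for producing the diagram \eqref{A} in $(3)\Rightarrow(1)$, Lemma \ref{a2} is not the right tool (it requires a given commutative square on the $d_0$ side); the correct justification is axiom (R0) applied to the morphism of extensions $(\id_{X_0},f)$, which you also cite, so no gap results.
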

\begin{proof}
(1) $\Leftrightarrow$ (3) It is clear by the definition of \emph{n-projectively trivial} morphisms.

(3) $\Rightarrow$ (2) Since  $g^{\prime}$ is a split epimorphism by Corollary \ref{y2}, it follows that there exists a morphism $g^{\prime\prime}: A\to X_n^{\prime}$ such that
$g^{\prime}\circ g^{\prime\prime}=\Id_{A}$. Then we have that $g\circ(\varphi_n\circ g^{\prime\prime})=f\circ g^{\prime}\circ g^{\prime\prime}=f$, and hence (2) holds.

(2) $\Rightarrow$ (3) For any distinguished $n$-exangle $X_{\bullet}:X_0\xrightarrow{\alpha_0}X_1\xrightarrow{\alpha_1}X_2\xrightarrow{\alpha_2}\cdots\xrightarrow{\alpha_{n-1}}X_n\xrightarrow{g}B\overset{\delta}{\dashrightarrow}$,
consider the diagram (\ref{A}). By the assumption of (2), $f$
factors through $g$, and so $\alpha_0^{\prime}$ is a split monomorphism by  Lemma \ref{y1}. Thus $f^\star\delta=0$, that is, the top  distinguished $n$-exangle $X_{\bullet}^{\prime}$ is split.
\end{proof}

\begin{definition}\cite[Definition 3.1]{HHZZ}\label{222} Let $\C$ be an $n$-exangulated category. A distinguished $n$-exangle
$$A_0\xrightarrow{\alpha_0}A_1\xrightarrow{\alpha_1}A_2\xrightarrow{\alpha_2}\cdots\xrightarrow{\alpha_{n-2}}A_{n-1}
\xrightarrow{\alpha_{n-1}}A_n\xrightarrow{\alpha_n}A_{n+1}\overset{\delta}{\dashrightarrow}$$
in $\C$ is called an \emph{Auslander-Reiten $n$-exangle }if
$\alpha_0$ is left almost split, $\alpha_n$ is right almost split and
when $n\geq 2$, $\alpha_1,\alpha_2,\cdots,\alpha_{n-1}$ are in $\rad_{\C}$.
\end{definition}

\begin{lemma}\rm\cite[Lemma 3.3]{HHZZ}\label{r1}
Let $\C$ be an $n$-exangulated category and
$$A_{\bullet}:~~A_0\xrightarrow{\alpha_0}A_1\xrightarrow{\alpha_1}A_2\xrightarrow{\alpha_2}\cdots\xrightarrow{\alpha_{n-2}}A_{n-1}
\xrightarrow{\alpha_{n-1}}A_n\xrightarrow{\alpha_n}A_{n+1}\overset{\delta}{\dashrightarrow}$$
be a distinguished $n$-exangle in $\C$. Then the following statements are equivalent:
\begin{itemize}
\item[\rm (1)] $A_{\bullet}$ is an Auslander-Reiten $n$-exangle;
\item[\rm (2)] ${\rm{End}}(A_0)$ is local, if $n\geq 2$, $ \alpha_1,\cdots,\alpha_{n-1}$ are in ${\rm rad}_{\C}$ and $\alpha_n$ is right almost split;
\item[\rm (3)] ${\rm{End}}(A_{n+1})$ is local, if $n\geq 2$, $\alpha_1,\alpha_2,\cdots,\alpha_{n-1}$ are in ${\rm rad}_{\C}$ and $\alpha_0$ is left almost split.
\end{itemize}
\end{lemma}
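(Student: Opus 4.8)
The plan is to prove $(1)\Rightarrow(2)$, $(1)\Rightarrow(3)$, $(2)\Rightarrow(1)$ and $(3)\Rightarrow(1)$, where the two directions $(2)\Rightarrow(1)$ and $(3)\Rightarrow(1)$ (dual to each other) carry all the content. Throughout I use the standard fact that a left almost split morphism $\alpha_0\colon A_0\to A_1$ forces $\mathrm{End}(A_0)$ to be local, and dually a right almost split $\alpha_n\colon A_n\to A_{n+1}$ forces $\mathrm{End}(A_{n+1})$ to be local. (In a Krull--Schmidt category this is just indecomposability of $A_0$: if $A_0$ split non-trivially, a non-trivial idempotent $e$ and $\mathrm{id}_{A_0}-e$, viewed as endomorphisms of $A_0$, would be non-split monomorphisms, hence both would factor through $\alpha_0$, forcing $\alpha_0$ to be a split monomorphism.) Granting this, $(1)\Rightarrow(2)$ and $(1)\Rightarrow(3)$ are immediate: by Definition \ref{222} an Auslander--Reiten $n$-exangle already has $\alpha_1,\dots,\alpha_{n-1}\in\rad_{\C}$ and has $\alpha_0$ left, $\alpha_n$ right almost split, and locality of the relevant endomorphism ring is then automatic.

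For $(2)\Rightarrow(1)$ only the assertion ``$\alpha_0$ is left almost split'' has to be added, the other two clauses of Definition \ref{222} being hypotheses of $(2)$. First, $\alpha_0$ is not a split monomorphism: since $\alpha_n$ is right almost split it is not a split epimorphism, so by Corollary \ref{y2} (applied to the identity endomorphism of $A_\bullet$) we get $\delta\neq0$, and then again by Corollary \ref{y2} $\alpha_0$ is not a split monomorphism. Next I claim that any morphism $h\colon A_0\to B$ which is not a split monomorphism factors through $\alpha_0$; since $(\alpha_0)_{\ast}\delta=0$ and, conversely, by forming the pushout one may apply Lemma \ref{y1}, this is equivalent to $h_{\ast}\delta=0$. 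Suppose for contradiction that $h_{\ast}\delta\neq0$. Choose a distinguished $n$-exangle $\langle Y_{\bullet},h_{\ast}\delta\rangle$ with $Y_0=B$, $Y_{n+1}=A_{n+1}$, together with the induced morphism $h_{\bullet}\colon\langle A_{\bullet},\delta\rangle\to\langle Y_{\bullet},h_{\ast}\delta\rangle$ having $h_0=h$ and $h_{n+1}=\mathrm{id}_{A_{n+1}}$. Since $h_{\ast}\delta\neq0$, this $n$-exangle is non-split, so by Corollary \ref{y2} its deflation $\beta_n\colon Y_n\to A_{n+1}$ is not a split epimorphism; as $\alpha_n$ is right almost split, $\beta_n=\alpha_n t$ for some $t\colon Y_n\to A_n$. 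The square with rows $\beta_n$, $\alpha_n$, left column $t$ and right column $\mathrm{id}_{A_{n+1}}$ commutes, so by the dual of Lemma \ref{a2} there is a morphism $g_{\bullet}\colon\langle Y_{\bullet},h_{\ast}\delta\rangle\to\langle A_{\bullet},\delta\rangle$ with $g_n=t$ and $g_{n+1}=\mathrm{id}_{A_{n+1}}$. Then $g_{\bullet}h_{\bullet}$ is an endomorphism of $\langle A_{\bullet},\delta\rangle$ whose degree-$(n{+}1)$ component is $\mathrm{id}_{A_{n+1}}$, hence $(g_0h)_{\ast}\delta=(\mathrm{id}_{A_{n+1}})^{\ast}\delta=\delta$. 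Now use locality of $\mathrm{End}(A_0)$: if $g_0h$ is invertible, then $(g_0h)^{-1}g_0$ is a retraction of $h$, contradicting that $h$ is not a split monomorphism; if $g_0h$ is not invertible, then $\mathrm{id}_{A_0}-g_0h$ is invertible, and applying $(-)_{\ast}$ to $(g_0h)_{\ast}\delta=\delta$ gives $(\mathrm{id}_{A_0}-g_0h)_{\ast}\delta=0$, whence $\delta=0$, contradicting $\delta\neq0$. Therefore $h_{\ast}\delta=0$, so $h$ factors through $\alpha_0$ and $A_\bullet$ is an Auslander--Reiten $n$-exangle.

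The implication $(3)\Rightarrow(1)$ is entirely dual: one shows $\alpha_n$ is not a split epimorphism exactly as above, and for a non-split-epimorphism $h\colon B\to A_{n+1}$ one proves $h^{\ast}\delta=0$ (equivalently, by Lemma \ref{y1}, that $h$ factors through $\alpha_n$) by forming the pullback $n$-exangle $\langle Y_{\bullet},h^{\ast}\delta\rangle$ with $Y_0=A_0$, $Y_{n+1}=B$ and induced morphism $h_{\bullet}$ with $h_0=\mathrm{id}_{A_0}$, $h_{n+1}=h$; using that $\alpha_0$ is left almost split to factor its inflation $\beta_0\colon A_0\to Y_1$ as $\beta_0=t\alpha_0$; extending via Lemma \ref{a2} to $g_{\bullet}\colon\langle A_{\bullet},\delta\rangle\to\langle Y_{\bullet},h^{\ast}\delta\rangle$ with $g_0=\mathrm{id}_{A_0}$; and then composing $h_{\bullet}g_{\bullet}$ to an endomorphism of $\langle A_{\bullet},\delta\rangle$ with degree-$0$ component $\mathrm{id}_{A_0}$, so that $(hg_{n+1})^{\ast}\delta=\delta$, before running the same local-ring dichotomy inside $\mathrm{End}(A_{n+1})$.

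I expect the main obstacle to lie in the core of $(2)\Rightarrow(1)$ and its dual: setting up the pushout (resp. pullback) $n$-exangle and the induced morphism $h_{\bullet}$, then producing a morphism $g_{\bullet}$ in the opposite direction — this is exactly where the almost split property of $\alpha_n$ (resp. $\alpha_0$) together with (the dual of) Lemma \ref{a2} is used — and finally noticing that $g_{\bullet}h_{\bullet}$ is the identity on the relevant end term, so that $(g_0h)_{\ast}\delta=\delta$. The closing dichotomy is then clean, since for a local ring a non-invertible $g_0h$ makes $\mathrm{id}_{A_0}-g_0h$ invertible; in particular no finiteness of endomorphism rings is needed here. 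The only minor point to verify is that the dual of Lemma \ref{a2} is available in this generality, which is immediate because the opposite of an $n$-exangulated category is again $n$-exangulated.
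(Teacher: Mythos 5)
The paper itself gives no proof of this lemma — it is quoted verbatim from \cite[Lemma 3.3]{HHZZ} — so there is no in-paper argument to compare against; judged on its own, your proof is correct and is the standard one that the cited source follows: the equivalence ``$h$ factors through $\alpha_0$ iff $h_{\ast}\delta=0$'' via Lemma \ref{y1}, factoring the non-split deflation of the pushed-out $n$-exangle through the right almost split $\alpha_n$, completing to a morphism $g_{\bullet}$ by the dual of Lemma \ref{a2}, and then the local-ring dichotomy applied to $(g_0h)_{\ast}\delta=\delta$, with the dual argument for $(3)\Rightarrow(1)$. The only point to make explicit is that the locality of $\End(A_0)$ (resp. $\End(A_{n+1})$) in $(1)\Rightarrow(2)$ and $(1)\Rightarrow(3)$ uses the Krull--Schmidt (or at least idempotent-complete) hypothesis, as you note parenthetically; this is the standing assumption of Section 3 even though it is not restated in the lemma.
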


Let $A$ and $B$ be two objects in $\C$. We denote by ${\P}(A,B)$ the set of $n$-projectively trivial morphisms from $A$ to $B$.
The \emph{stable category} $\underline{\C}$ of $\C$ is defined as follows, the category whose objects are objects of $\C$ and whose morphisms are elements of
${\underline{\C}}(A,B)={\C}(A,B)/\P(A,B)$. Given a morphism $f\colon A\to B$ in $\C$, we denote by $\underline{f}$ the image of $f$ in $\underline{\C}$. Dually, We denote by ${\I}(A,B)$ the set of $n$-injectively trivial morphisms from $A$ to $B$.
The \emph{costable category} $\overline{\C}$ of $\C$ is defined dually. Given a morphism $g\colon A\to B$ in $\C$, we denote by $\overline{g}$ the image of $g$ in $\overline{\C}$.

Let $\check{k}$ be the minimal injective cogenerator for the category $k$-mod of finitely generated $k$-modules. We write $D:=\Hom_k(-,\check{k})$. For any two $k$-modules $M$ and $N$, a $k$-bilinear form $\langle-,-\rangle:M\times N\rightarrow\check{k}$ is called \emph{non-degenerated}, if for any nonzero element $m\in M$, there exists some $n\in N$ such that $\langle m,n\rangle\neq0$, and for any nonzero element $n\in N$, there exists some $m\in M$ such that $\langle m,n\rangle\neq0$. For each $Y\in\C$, we notice that every $k$-linear form $\phi:\E(X_{n+1},X_0)\rightarrow\check{k}$ determines two $k$-bilinear forms as follows:
$$\langle-,-\rangle_\phi^Y:\overline{\C}(Y,X_0)\times\E(X_{n+1},Y)\rightarrow \check{k},~~~(\overline{f},\delta)\mapsto\phi(f_{\ast}\delta)$$
and
$$_\phi^Y\langle-,-\rangle:{\E}(Y,X_0)\times\underline{\C}(X_{n+1},Y)\rightarrow \check{k},~~~(\delta,\underline{g})\mapsto\phi(g^{\ast}\delta).$$

The following two results are essentially contained in \cite[Proposition 3.1]{LZ1} and \cite[Proposition 4.1]{LZ1}. However, it can be
extended to our setting.

\begin{lemma}\label{Le3}Let $X_{\bullet}:X_0\xrightarrow{\alpha_0}X_1\xrightarrow{\alpha_1}X_2\xrightarrow{\alpha_2}\cdots\xrightarrow{\alpha_{n-1}}X_n\xrightarrow{\alpha_{n}}X_{n+1}\overset{\gamma}{\dashrightarrow}$ be an \emph{Auslander-Reiten $n$-exangle} in $\C$, and $\phi:\E(X_{n+1},X_0)\rightarrow\check{k}$ a $k$-linear form such that $\phi(\gamma)\neq0$.
\begin{enumerate}
\item[$(1)$]For each $Y\in\C$, we have a \emph{non-degenerated} $k$-bilinear form
$$\langle-,-\rangle_\phi^Y:\overline{\C}(Y,X_0)\times\E(X_{n+1},Y)\rightarrow \check{k},~~~(\overline{f},\delta)\mapsto\phi(f_{\ast}\delta).$$
Moreover, the above $k$-bilinear form induces a natural isomorphism
$$\phi_{X_{n+1},Y}:\overline{\C}(Y,X_0)\rightarrow D\E(X_{n+1},Y),~~~\overline{f}\mapsto\langle\overline{f},-\rangle_\phi^Y$$
functorial in $Y\in\C$ with $\phi=\phi_{X_{n+1},X_0}(\overline{\Id_{X_0}})$.
\item[$(2)$]For each $Y\in\C$, we have a \emph{non-degenerated} $k$-bilinear form
$$_\phi^Y\langle-,-\rangle:{\E}(Y,X_0)\times\underline{\C}(X_{n+1},Y)\rightarrow \check{k},~~~(\delta,\underline{g})\mapsto\phi(g^{\ast}\delta).$$
Moreover, the above $k$-bilinear form induces a natural isomorphism
$$\psi_{Y,X_0}:\underline{\C}(X_{n+1},Y)\rightarrow D\E(Y,X_0),~~~\underline{g}\mapsto_\phi^Y\langle-,{\underline{g}}\rangle$$
functorial in $Y\in\C$ with $\phi=\psi_{X_{n+1},X_0}(\underline{\Id_{X_{n+1}}})$.
\end{enumerate}
\end{lemma}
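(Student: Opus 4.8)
The plan is to prove part (1) in detail and to obtain part (2) by the evident dualization, in which ``left almost split'' is replaced by ``right almost split'', the identity $(\alpha_0)_{\ast}\gamma=0$ by $(\alpha_n)^{\ast}\gamma=0$, and the two Yoneda-type exact sequences of Lemma \ref{a1} exchange roles. For part (1) the argument splits into three stages: (a) verifying that $\langle-,-\rangle_\phi^Y$ is a well-defined $k$-bilinear form, that $\phi_{X_{n+1},Y}$ is functorial in $Y$, and that $\phi_{X_{n+1},X_0}(\overline{\Id_{X_0}})=\phi$; (b) showing that $\langle-,-\rangle_\phi^Y$ is non-degenerate; (c) upgrading non-degeneracy to the statement that $\phi_{X_{n+1},Y}$ is an isomorphism. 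Stages (a) and (c) are formal, and the real content lies in (b).

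For stage (a): since $\E(X_{n+1},-)$ is $k$-linear and $\phi$ is $k$-linear, $(f,\delta)\mapsto\phi(f_{\ast}\delta)$ is $k$-bilinear; and if $f$ is $n$-injectively trivial (Definition \ref{De1}) then $\E(X_{n+1},f)=0$, hence $f_{\ast}\delta=0$ and $\phi(f_{\ast}\delta)=0$, so the form descends to $\overline{\C}(Y,X_0)\times\E(X_{n+1},Y)$ and defines $\phi_{X_{n+1},Y}$. Functoriality in $Y$ is the identity $\phi((fh)_{\ast}\delta)=\phi(f_{\ast}(h_{\ast}\delta))$ for $h\colon Y\to Y'$, which is immediate from $(fh)_{\ast}=f_{\ast}h_{\ast}$; and $\phi_{X_{n+1},X_0}(\overline{\Id_{X_0}})$ sends $\delta\mapsto\phi((\Id_{X_0})_{\ast}\delta)=\phi(\delta)$, i.e. equals $\phi$.

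Stage (b) amounts to two claims. First, for $0\neq\delta\in\E(X_{n+1},Y)$ there is $f\colon Y\to X_0$ with $\phi(f_{\ast}\delta)\neq0$: realize $\delta$ by a distinguished $n$-exangle $Y\to W_1\to\cdots\to W_n\xrightarrow{\beta_n}X_{n+1}\overset{\delta}{\dashrightarrow}$; it is non-split, so $\beta_n$ is not a split epimorphism by Corollary \ref{y2}, hence factors through $\alpha_n$ because $\alpha_n$ is right almost split (Definition \ref{222}), and therefore $\beta_n^{\ast}\gamma=0$ by the exactness of $\C(W_n,X_n)\to\C(W_n,X_{n+1})\to\E(W_n,X_0)$ coming from Lemma \ref{a1} applied to $X_{\bullet}$; evaluating the second exact sequence of Lemma \ref{a1} for $W_{\bullet}$ at $X_0$ identifies $\ker\big(\E(\beta_n,X_0)\big)$ with the image of $\delta^{\sharp}\colon\C(Y,X_0)\to\E(X_{n+1},X_0),\ f\mapsto f_{\ast}\delta$, so $\gamma=f_{\ast}\delta$ for some $f$ and $\phi(f_{\ast}\delta)=\phi(\gamma)\neq0$. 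Second, if $\overline{f}\neq0$ there is $\delta\in\E(X_{n+1},Y)$ with $\phi(f_{\ast}\delta)\neq0$: since $f$ is not $n$-injectively trivial there are an object $Z$ and $\xi\in\E(Z,Y)$ with $f_{\ast}\xi\neq0$ in $\E(Z,X_0)$; realize $f_{\ast}\xi$ by a distinguished $n$-exangle $X_0\xrightarrow{u_0}U_1\to\cdots\to U_n\to Z\overset{f_{\ast}\xi}{\dashrightarrow}$, which is non-split, so $u_0$ is not a split monomorphism (again by Corollary \ref{y2}) and thus $u_0=u'\alpha_0$ for some $u'$ because $\alpha_0$ is left almost split; then $(u_0)_{\ast}\gamma=u'_{\ast}\big((\alpha_0)_{\ast}\gamma\big)=0$ since $(\alpha_0)_{\ast}\gamma=0$ for the attached complex $X_{\bullet}$. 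The first exact sequence of Lemma \ref{a1} for $U_{\bullet}$, evaluated at $X_{n+1}$, then yields $\gamma\in\ker\big(\E(X_{n+1},u_0)\big)=\operatorname{im}\big((f_{\ast}\xi)_{\sharp}\colon\C(X_{n+1},Z)\to\E(X_{n+1},X_0)\big)$, i.e. $c^{\ast}(f_{\ast}\xi)=\gamma$ for some $c\colon X_{n+1}\to Z$; setting $\delta:=c^{\ast}\xi$ and using bifunctoriality of $\E$ (so $f_{\ast}(c^{\ast}\xi)=c^{\ast}(f_{\ast}\xi)$) gives $f_{\ast}\delta=\gamma$, whence $\phi(f_{\ast}\delta)\neq0$.

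For stage (c): both $\overline{\C}(Y,X_0)$ and $\E(X_{n+1},Y)$ have finite $k$-length (by the standing $\Hom$- and $\Ext$-finiteness hypotheses), and $D=\Hom_k(-,\check k)$ is a length-preserving exact duality on finite-length $k$-modules; non-degeneracy in the first variable makes $\phi_{X_{n+1},Y}$ injective and non-degeneracy in the second makes the companion map $\E(X_{n+1},Y)\to D\overline{\C}(Y,X_0)$ injective, so comparing lengths through $\ell_k(DM)=\ell_k(M)$ forces both to be bijective, and the naturality from stage (a) then upgrades $\{\phi_{X_{n+1},Y}\}_{Y}$ to a natural isomorphism $\overline{\C}(-,X_0)\xrightarrow{\ \sim\ }D\E(X_{n+1},-)$. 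I expect the only real obstacle to be stage (b), and within it the careful translation — through the exact sequences of Lemma \ref{a1} — between the almost-split factorization ($\beta_n$ through $\alpha_n$, resp. $u_0$ through $\alpha_0$) and the membership of the distinguished extension $\gamma$ in the appropriate image; one must use the correct one of the two exact sequences and the correct attached-complex identity, since the two halves are dual but not literally symmetric. Once (b) is in place, part (2) follows by repeating the argument in the dual $n$-exangulated structure.
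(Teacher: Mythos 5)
Your proof is correct, and its core coincides with the paper's: non-degeneracy in the second variable comes from factoring the deflation $\beta_n$ of a non-split distinguished $n$-exangle through the right almost split map $\alpha_n$, and non-degeneracy in the first variable comes from factoring the inflation of the $n$-exangle realizing $f_{\ast}\xi$ through the left almost split map $\alpha_0$. Where you diverge is in how the required morphism is then extracted: the paper uses Lemma \ref{a2} and its dual to lift these factorizations to morphisms of distinguished $n$-exangles and reads the identity $f_{\ast}\delta=\gamma$ (resp.\ $\gamma=f_{\ast}h^{\ast}\epsilon$) off the definition of a morphism of $\E$-attached complexes, whereas you stay entirely inside the prolonged exact sequences of Lemma \ref{a1}, using exactness at the $\E$-term together with $(\alpha_n)^{\ast}\gamma=0$, $(\alpha_0)_{\ast}\gamma=0$ and bifunctoriality of $\E$; both routes are valid and of comparable length, yours being slightly more economical since it avoids building the auxiliary commutative diagrams. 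You also spell out the two formal stages the paper leaves to the citation of \cite{LZ1}, namely well-definedness/functoriality and the passage from non-degeneracy to the isomorphism. One small correction in that last stage: the paper assumes only $\Ext$-finiteness, not Hom-finiteness, so you cannot appeal to a ``standing Hom-finiteness hypothesis'' for the finite length of $\overline{\C}(Y,X_0)$; but your own argument repairs this, since non-degeneracy in the first variable already embeds $\overline{\C}(Y,X_0)$ into $D\E(X_{n+1},Y)$, which has finite length by $\Ext$-finiteness, and then your length comparison (applied, if one prefers, to the image of $\phi_{X_{n+1},Y}$) forces both injections to be isomorphisms.
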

\begin{proof}
 We only prove that $(1)$, dually one can prove $(2)$. Assume that $\phi(\gamma)\neq0$ and $Y\in\C$. We only show that $\langle-,-\rangle_\phi^Y$ is non-degenerated. Let $$Y_{\bullet}:Y\xrightarrow{\beta_0}Y_1\xrightarrow{\beta_1}Y_2\xrightarrow{\beta_2}\cdots\xrightarrow{\beta_{n-1}}Y_n\xrightarrow{\beta_{n}}X_{n+1}\overset{\delta}{\dashrightarrow}$$ be a non-split distinguished $n$-exangle. Since $X_{\bullet}$ is an Auslander-Reiten $n$-exangle, there exists a morphism $f_n$ such that $\alpha_nf_n=\beta_n$. By the dual of Lemma \ref{a2}, we have the following commutative diagram
$$\xymatrix{
Y_{\bullet}:& Y\ar[r]^{\beta_0}\ar@{}[dr] \ar@{-->}[d]^{f} &Y_1 \ar[r]^{\beta_1} \ar@{}[dr]\ar@{-->}[d]^{}&\cdot\cdot\cdot \ar[r]^{\beta_{n-2}} \ar@{}[dr]&Y_{n-1} \ar[r]^{\beta_{n-1}}\ar@{}[dr]\ar@{-->}[d]^{} &Y_n \ar[r]^{\beta_n} \ar@{}[dr]\ar[d]^{f_n}&X_{n+1} \ar@{}[dr]\ar@{=}[d]^{} \ar@{-->}[r]^-{\delta} &\\
X_{\bullet}:& {X_0}\ar[r]^{\alpha_0} &{X_1}\ar[r]^{\alpha_1}&\cdot\cdot\cdot\ar[r]^{\alpha_{n-2}} &{X_{n-1}}  \ar[r]^{\alpha_{n-1}} &{X _n}\ar[r]^{\alpha_n}  &{X_{n+1}} \ar@{-->}[r]^-{\gamma} &.}
$$
Hence $f_{\ast}\delta=\gamma$ and $f\in\overline{\C}(Y,X_0)$. Then we have that $\langle\overline{f},\delta\rangle_\phi^Y=\phi(f_{\ast}\delta)=\phi(\gamma)\neq0$.

On the other hand, if $0\neq\overline{f}:Y\rightarrow X_0$ is a morphism in $\overline{\C}$, then ${f}:Y\rightarrow X_0$ representing $\overline{f}$ is not $n$-injectively trivial, and there exist $Z\in{\C}$ and $\epsilon\in\E(Z,Y)$ such that $f_{\ast}\epsilon=\E(Z,f)(\epsilon)\neq0$ by the dual of Lemma \ref{Le0}. Which means that there is a commutative diagram of non-split distinguished $n$-exangles
$$\xymatrix{
Z_{\bullet}:& Y\ar[r]^{\eta_0}\ar@{}[dr] \ar[d]^{f} &Z_1 \ar[r]^{\eta_1} \ar@{}[dr]\ar[d]^{}&\cdot\cdot\cdot \ar[r]^{\eta_{n-2}} \ar@{}[dr]&Z_{n-1} \ar[r]^{\eta_{n-1}}\ar@{}[dr]\ar[d]^{} &Z_n \ar[r]^{\eta_n} \ar@{}[dr]\ar[d]^{f_n}&Z \ar@{}[dr]\ar@{=}[d]^{} \ar@{-->}[r]^-{\epsilon} &\\
U_{\bullet}:& {X_0}\ar[r]^{\zeta_0} &{U_1}\ar[r]^{\zeta_1}&\cdot\cdot\cdot\ar[r]^{\zeta_{n-2}} &{U_{n-1}}  \ar[r]^{\zeta_{n-1}} &{U _n}\ar[r]^{\zeta_n}  &{Z} \ar@{-->}[r]^-{f_{\ast}\epsilon} &.}
$$
Since $X_{\bullet}$ is an Auslander-Reiten $n$-exangle, there exists a morphism $h_1$ such that $h_1\alpha_0=\zeta_0$. By Lemma \ref{a2}, we have the following commutative diagram
$$\xymatrix{
X_{\bullet}:& X_0\ar[r]^{\alpha_0}\ar@{}[dr] \ar@{=}[d]^{} &X_1 \ar[r]^{\alpha_1} \ar@{}[dr]\ar[d]^{h_1}&\cdot\cdot\cdot \ar[r]^{\alpha_{n-2}} \ar@{}[dr]&X_{n-1} \ar[r]^{\alpha_{n-1}}\ar@{}[dr]\ar@{-->}[d]^{} &X_n \ar[r]^{\alpha_n} \ar@{}[dr]\ar@{-->}[d]^{}&X_{n+1} \ar@{}[dr]\ar@{-->}[d]^{h} \ar@{-->}[r]^-{\gamma} &\\
U_{\bullet}:& {X_0}\ar[r]^{\zeta_0} &{U_1}\ar[r]^{\zeta_1}&\cdot\cdot\cdot\ar[r]^{\zeta_{n-2}} &{U_{n-1}}  \ar[r]^{\zeta_{n-1}} &{U_n}\ar[r]^{\zeta_n}  &{Z} \ar@{-->}[r]^-{f_{\ast}\epsilon} &.}
$$
Then $\gamma=h^{\ast}(f_{\ast}\epsilon)=f_{\ast}h^{\ast}\epsilon$, therefore, we have that $\langle\overline{f},h^{\ast}\epsilon\rangle_\phi^Y=\phi(f_{\ast}(h^{\ast}\epsilon))=\phi(\gamma)\neq0$.
\end{proof}

\begin{lemma}\label{Le4}Suppose $X_{n+1}$ $(\mbox{respectively},~ Y_{0})$ is a non-projective $(\mbox{respectively, non-injective})$ indecomposable object in $\C$.
\begin{enumerate}
\item[$(1)$]If $\phi_{X_{n+1},-}:\overline{\C}(-,X')\rightarrow D\E(X_{n+1},-)$ is an isomorphism of functors for some $X'\in\C$. Then there exists an \emph{Auslander-Reiten $n$-exangle} ending at $X_{n+1}$ in $\C$ of the form $$X_{\bullet}:X_0\xrightarrow{}X_1\xrightarrow{}X_2\xrightarrow{}\cdots\xrightarrow{}X_n\xrightarrow{}X_{n+1}\overset{}{\dashrightarrow},$$
where $X_0$ is a non-injective indecomposable direct summand of $X'$.

\item[$(2)$]If $\psi_{-,Y_{0}}:\underline{\C}(Y',-)\rightarrow D\E(-,Y_{0})$ is an isomorphism of functors for some $Y'\in\C$. Then there exists an \emph{Auslander-Reiten $n$-exangle} starting at $ Y_{0}$ in $\C$ of the form $$Y_{\bullet}:Y_0\xrightarrow{}Y_1\xrightarrow{}Y_2\xrightarrow{}\cdots\xrightarrow{}Y_n\xrightarrow{}Y_{n+1}\overset{}{\dashrightarrow},$$
where $Y_{n+1}$ is a non-projective indecomposable direct summand of $Y'$.
\end{enumerate}
\end{lemma}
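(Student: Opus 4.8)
\emph{Setup and reduction.} I will prove (1); statement (2) follows by the dual argument using the costable category. Suppose $\phi_{X_{n+1},-}\colon\overline{\C}(-,X')\Rightarrow D\E(X_{n+1},-)$ is a natural isomorphism. Since $X_{n+1}$ is non-projective, Lemma \ref{def2} gives $\E(X_{n+1},A)\neq 0$ for some $A$, so $D\E(X_{n+1},-)$ is a nonzero functor and hence $X'\neq 0$. Decompose $X'=\bigoplus_{i}X'_i$ into indecomposables in the Krull-Schmidt category $\C$; then $\overline{\C}(-,X')\cong\bigoplus_i\overline{\C}(-,X'_i)$, and since $D\E(X_{n+1},-)$ is a functor that I will show is indecomposable as a functor, exactly one summand $X_0:=X'_{i_0}$ carries the isomorphism. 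To see indecomposability: use the natural isomorphism to transport the idempotent-completeness/locality, i.e. $\End(D\E(X_{n+1},-))\cong\End(\overline{\C}(-,X'))$, and show that the universal object forces this endomorphism ring to be local. Concretely, I will instead argue directly: choose $X_0$ to be \emph{some} non-injective indecomposable summand of $X'$ on which $\phi$ restricts to a monomorphism of functors, and then show surjectivity is automatic by counting composition lengths (both sides are finite length $k$-modules since $\C$ is $\Ext$-finite), so $\overline{\C}(-,X_0)\cong D\E(X_{n+1},-)$.

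\emph{Construction of the $n$-exangle.} Set $\overline{\Id_{X_0}}\in\overline{\C}(X_0,X_0)$ and let $\gamma':=\phi_{X_{n+1},X_0}(\overline{\Id_{X_0}})\in D\E(X_{n+1},X_0)=\Hom_k(\E(X_{n+1},X_0),\check k)$. Since $\overline{\Id_{X_0}}\neq 0$ (as $X_0$ is non-injective, $\Id_{X_0}$ is not $n$-injectively trivial by the dual of Lemma \ref{Le0}) and $\phi_{X_{n+1},X_0}$ is injective, $\gamma'\neq 0$, so there is $\gamma\in\E(X_{n+1},X_0)$ with $\gamma'(\gamma)\neq 0$, i.e. a $k$-linear form $\phi$ on $\E(X_{n+1},X_0)$ with $\phi(\gamma)\neq 0$ — here I take $\phi:=\gamma'$. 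Now apply $\s$ to $\gamma$ to obtain a distinguished $n$-exangle
$$X_{\bullet}\colon X_0\xrightarrow{\alpha_0}X_1\xrightarrow{\alpha_1}\cdots\xrightarrow{\alpha_{n-1}}X_n\xrightarrow{\alpha_n}X_{n+1}\overset{\gamma}{\dashrightarrow};$$
by Lemma \ref{ml} I may and do choose the representative so that $\alpha_1,\dots,\alpha_{n-1}\in\rad_{\C}$ (minimality), and since $\gamma\neq 0$ this exangle is non-split by Corollary \ref{y2}.

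\emph{Verifying the Auslander-Reiten property.} By Lemma \ref{r1} it suffices to show $\End(X_0)$ is local and $\alpha_n$ is right almost split (the radical condition being already arranged). Locality of $\End(X_0)$: transport along the natural isomorphism $\phi_{X_{n+1},-}$ to get a ring isomorphism $\End_{\overline{\C}}(X_0)\cong\End(D\E(X_{n+1},-))\cong\End(\E(X_{n+1},-))$, and since $X_{n+1}$ is indecomposable with $\End(X_{n+1})$ local (Krull-Schmidt), one shows this functor endomorphism ring is local; then lift locality from $\End_{\overline{\C}}(X_0)$ to $\End_{\C}(X_0)$ using that the radical quotient and idempotents behave well. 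That $\alpha_n$ is right almost split: it is not a split epimorphism (the exangle is non-split, Corollary \ref{y2}), so I must show every non-split-epi $t\colon T\to X_{n+1}$ factors through $\alpha_n$. Embed $t$ into a non-split distinguished $n$-exangle $\langle T_{\bullet},\delta_T\rangle$ with $T_n=T$, $\alpha_n^{T}=t$ (using that $t$ is a deflation after completing, via the exact sequences of Lemma \ref{a1}; if $t$ is not itself a deflation one instead works with the induced extension and the exactness in Definition \ref{def1}). Its class $\delta_T\in\E(X_{n+1},T)$ is nonzero, and by the isomorphism $\phi_{X_{n+1},T}$ there is $\overline{f}\in\overline{\C}(T,X_0)$ with $\langle\overline{f},\delta_T\rangle_\phi^T=\phi(f_*\delta_T)\neq 0$; in particular $f_*\delta_T\neq 0$. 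Comparing $f_*\delta_T$ with $\gamma$ via naturality of $\phi$ and the identification $\phi=\phi_{X_{n+1},X_0}(\overline{\Id_{X_0}})$ shows $f_*\delta_T$ and $\gamma$ lie in the same line, so after rescaling $f$ we get $f_*\delta_T=\gamma$; this is exactly a morphism of extensions $(f,\id_{X_{n+1}})\colon\delta_T\to\gamma$, which by (R0) lifts to a morphism of $n$-exangles $\langle T_{\bullet},\delta_T\rangle\to\langle X_{\bullet},\gamma\rangle$ with top map the identity on $X_{n+1}$; the degree-$n$ component then yields $g\colon T\to X_n$ with $\alpha_n g=t$. Hence $\alpha_n$ is right almost split, and $X_{\bullet}$ is an Auslander-Reiten $n$-exangle ending at $X_{n+1}$ with $X_0$ a non-injective indecomposable summand of $X'$.

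\emph{Main obstacle.} The genuinely delicate point is establishing locality of $\End(X_0)$ and, equivalently, pinning down which indecomposable summand of $X'$ is the correct $X_0$ — i.e. transferring the Krull-Schmidt/local-endomorphism structure across the representability isomorphism between a Hom-functor on the (co)stable category and the dual Ext-functor, and then lifting it from $\overline{\C}$ to $\C$. Once that is in place, the construction of $X_{\bullet}$ and the verification that $\alpha_n$ is right almost split are relatively routine applications of Lemmas \ref{a1}, \ref{a2}, \ref{ml}, Corollary \ref{y2}, Lemma \ref{r1}, and the non-degeneracy from Lemma \ref{Le3}.
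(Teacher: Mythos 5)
Your proposal has genuine gaps, and they occur precisely at the points where the paper's argument is structured differently. First, your reduction to a representability $\overline{\C}(-,X_0)\cong D\E(X_{n+1},-)$ by a single indecomposable summand $X_0$ of $X'$ is not established: "the functor is indecomposable" is asserted but not proved, and the fallback ("restrict to a summand on which $\phi$ is a monomorphism of functors and count lengths") does not work — the restriction of the isomorphism to \emph{every} summand $\overline{\C}(-,X'_i)$ is a monomorphism of functors, and a monomorphism between finite-length $k$-modules is not an isomorphism without an actual length comparison, which you never supply. (By contrast, the locality of $\End_{\C}(X_0)$ that you list as the delicate point is automatic: $X_0$ is indecomposable in a Krull--Schmidt category.) Second, even granting that reduction, taking an \emph{arbitrary} $\gamma$ with $\phi(\gamma)\neq 0$, where $\phi=\phi_{X_{n+1},X_0}(\overline{\Id_{X_0}})$, does not produce an Auslander--Reiten $n$-exangle: when $\E(X_{n+1},X_0)$ has $k$-length greater than one, the almost split classes are exactly those $\gamma\neq0$ with $\phi(u_{\ast}\gamma)=0$ for all $u\in\rad_{\C}(X_0,X_0)$ (equivalently, orthogonal to composites with non-split monomorphisms), and a generic $\gamma$ with $\phi(\gamma)\neq0$ fails this. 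Third, your verification that $\alpha_n$ is right almost split is broken: a non-split epimorphism $t\colon T\to X_{n+1}$ need not be a deflation, so it cannot in general be completed to a distinguished $n$-exangle with last map $t$; and the step "$f_{\ast}\delta_T\neq 0$ hence, after rescaling, $f_{\ast}\delta_T=\gamma$" is unjustified ($k$ is only a commutative artinian ring, and nonvanishing of the pairing does not force proportionality). The correct criterion would be $t^{\ast}\gamma=0$ via the exact sequence of Lemma \ref{a1}, but pullbacks vary the \emph{first} argument of $\E$, which the given isomorphism (natural in the second argument) does not control directly.

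The paper sidesteps all three issues. It fixes \emph{any} non-injective indecomposable direct summand $X_0$ of $X'$ (no refined representability needed), uses the isomorphism only to get the non-degenerate pairing $\langle\overline{f},\delta\rangle_{\phi}^{Y}=\phi(f_{\ast}\delta)$ with values through $X'$, and chooses $\delta\in\E(X_{n+1},X_0)$ non-split and orthogonal to the $k$-submodule $G$ of non-split monomorphisms $X_0\to X'$ (possible because $\overline{G}$ is proper and the pairing identifies $\E(X_{n+1},X_0)$ with $D\overline{\C}(X_0,X')$). After passing to a minimal realization (Lemma \ref{ml}), it verifies that $\alpha_0$ is \emph{left} almost split — pushforwards $s_{\ast}\delta$ live in $\E(X_{n+1},V)$, exactly where the naturality of $\phi_{X_{n+1},-}$ and non-degeneracy apply — and then concludes by Lemma \ref{r1}(3), using only that $\End(X_{n+1})$ is local. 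If you want to salvage your outline, you must (i) replace "any $\gamma$ with $\phi(\gamma)\neq0$" by a class orthogonal to the non-split monomorphisms, and (ii) verify the left almost split property of $\alpha_0$ rather than the right almost split property of $\alpha_n$; once you do both, the refined representability by an indecomposable becomes unnecessary.
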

\begin{proof}
 We just prove the first statement, the second statement proves similarly. For each object and each morphism $f:U\rightarrow X'$, we have the following commutative diagram by the naturalness of $ \phi_{X_{n+1},-}$
 $$\xymatrix@C=1.3cm@R=1.2cm{
 \overline{\C}(X',X')\ar[r]^{\phi_{X_{n+1},X'}}_{\simeq} \ar@{}[dr]|{\circlearrowright} \ar[d]_{\overline{\C}(f,X')} &D\E(X_{n+1},X') \ar[d]^{D\E(X_{n+1},f)}\\
\overline{\C}(U,X') \ar[r]_{\phi_{X_{n+1},U}}^{\simeq} &D\E(X_{n+1},U).
}
$$
Take $\phi=\phi_{X_{n+1},X'}(\overline{\Id_{X'}})$, then we have that $$\phi_{X_{n+1},U}(\overline{f})=({D\E(X_{n+1},f)}\circ\phi_{X_{n+1},X'})(\overline{\Id_{X'}})={D\E(X_{n+1},f)}(\phi)=\phi\circ\E(X_{n+1},f).$$ Thus we obtain a $k$-linear form $\phi:\E(X_{n+1},X')\rightarrow\check{k}$ satisfying $\phi_{X_{n+1},U}(\overline{f})(\theta)=\phi(f_{\ast}\theta)$ for each $\theta \in \E(X_{n+1},U)$. Let $X_0$ is a non-injective indecomposable direct summand of $X'$. Then the isomorphism $\phi_{X_{n+1},X_{0}}$ induces a {non-degenerated} $k$-bilinear form
$$\langle-,-\rangle_\phi^{X_0}:\overline{\C}(X_0,X')\times\E(X_{n+1},X_0)\rightarrow \check{k},~~~(\overline{f},\delta)\mapsto\phi(f_{\ast}\delta).$$
Let $$G=\{f\in {\C}(X_0,X')~|~ f ~\mbox{is non-split monomorphism} \}.$$
Since $X_0$ is indecomposable, we know that the nonempty subset $G$ is a $k$-submodule of ${\C}(X_0,X')$. Since $X_0$ is non-injective, we have $\I(X_0,X')\subseteq G$. Hence $\overline{G}:=G/\I(X_0,X')$ is properly contained in ${\C}(X_0,X')$. Then there exists a non-split $\E$-extension $\delta\in\E(X_{n+1},X_{0})$ of the form
$$X_{\bullet}:X_0\xrightarrow{\alpha_0}X_1\xrightarrow{\alpha_1}X_2\xrightarrow{\alpha_2}\cdots\xrightarrow{\alpha_{n-1}}X_n\xrightarrow{\alpha_{n}}X_{n+1}\overset{\delta}{\dashrightarrow}$$
such that $\langle\overline{h},\delta\rangle_\phi^{X_0}=\phi(h_{\ast}\delta)=0$ for each non-split monomorphisms $h:X_0\rightarrow X'$ in $G$. Moreover, we can assume that $\alpha_i\in\rad_{\C}$ for $ i\in\{1,2,\cdots,n-1\}$ by Lemma \ref{ml}.

Finally, we claim that $X_{\bullet}$ an Auslander-Reiten $n$-exangle. Suppose that $s:{X_0}\rightarrow V$ is not a split monomorphism, then for each $t:V\rightarrow X'$, the morphism $t\circ s\in G$. Hence we have $\langle\overline{t\circ s},\delta\rangle_\phi^{X_0}=0$. Consider the {non-degenerated} $k$-bilinear form
$$\langle-,-\rangle_\phi^{V}:\overline{\C}(V,X')\times\E(X_{n+1},V)\rightarrow \check{k},~~~(\overline{f},\beta)\mapsto\phi(f_{\ast}\beta),$$ which is induced by $\phi_{X_{n+1},V}$. Hence we have
$$\langle\overline{t},s_{\ast}\delta\rangle_\phi^{V}=\phi(t_{\ast}(s_{\ast}\delta))=\phi((t\circ s)_{\ast}\delta)=\langle\overline{t\circ s},\delta\rangle_\phi^{X_0}=0.$$
It follows that the $\E$-extension $s_{\ast}\delta$ split by the non-degeneracy of $\langle-,-\rangle_\phi^{V}$. which implies that the morphism $s$ factors through $\alpha_0$ by Lemma \ref{y1}. This shows that the morphism $\alpha_0$ is left almost split. Therefore, the claim follows form Lemma \ref{r1} since ${\rm{End}}(X_{n+1})$ is local.
\end{proof}

We introduce two full subcategories of $\C$ as follows.

$$\C_r=\{X\in\C|~\text{the functor}~ D\E(X,-)\colon\overline\C\to  k\text{-mod}~~\text{is representable}\},$$
$$\C_l=\{X\in\C|~\text{the functor}~ D\E(-,X)\colon\underline\C\to  k\text{-mod}~~\text{is representable}\}.$$

As a direct consequence of Lemmas~\ref{Le3} and Lemma \ref{Le4}, we have the following description of indecomposable objects in $\C_r$ and $\C_l$.

\begin{proposition}\label{desc}
Let $X$ be an indecomposable object in $\C$.
\begin{enumerate}
\item[$(1)$] If $X$ is non-projective, then $X\in\C_r$ if and only if there exists an {Auslander-Reiten $n$-exangle} ending at $X$.
\item[$(2)$] If $X$ is non-injective, then $X\in\C_l$ if and only if there exists an {Auslander-Reiten $n$-exangle} starting at $X$.
\end{enumerate}
\end{proposition}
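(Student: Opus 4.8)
The plan is to read the statement off directly from Lemma~\ref{Le3} and Lemma~\ref{Le4}, which between them assert that, for a non-projective (resp. non-injective) indecomposable object $X$, representability of $D\E(X,-)$ on $\overline{\C}$ (resp. of $D\E(-,X)$ on $\underline{\C}$) is \emph{equivalent} to the existence of an Auslander--Reiten $n$-exangle ending (resp. starting) at $X$. I will carry out (1) in detail; (2) then follows by the dual argument, interchanging $\overline{\C}$ with $\underline{\C}$, inflations with deflations and $n$-injectively trivial with $n$-projectively trivial morphisms, and invoking part (2) of Lemmas~\ref{Le3} and~\ref{Le4} in place of part (1).

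For the ``if'' direction, assume $X$ is non-projective indecomposable and that there is an Auslander--Reiten $n$-exangle $X_0\xrightarrow{\alpha_0}X_1\to\cdots\to X_n\xrightarrow{\alpha_n}X\overset{\gamma}{\dashrightarrow}$ ending at $X$. By Definition~\ref{222}, $\alpha_0$ is left almost split, hence not a split monomorphism, so $\gamma\neq 0$ by Corollary~\ref{y2}. Since $\E(X,X_0)$ is a finitely generated $k$-module and $\check{k}$ is an injective cogenerator of $k\text{-mod}$, we may choose a $k$-linear form $\phi\colon\E(X,X_0)\to\check{k}$ with $\phi(\gamma)\neq 0$. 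I would first observe that $D\E(X,-)$ genuinely defines a functor on $\overline{\C}$, because every $n$-injectively trivial morphism $g$ has $\E(X,g)=0$ by Definition~\ref{De1}; then Lemma~\ref{Le3}(1) supplies a natural isomorphism $\phi_{X,-}\colon\overline{\C}(-,X_0)\xrightarrow{\sim}D\E(X,-)$ of functors on $\overline{\C}$. Hence $D\E(X,-)$ is represented by $X_0$, i.e. $X\in\C_r$.

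For the ``only if'' direction, assume $X$ is non-projective indecomposable and $X\in\C_r$, say there is a natural isomorphism $\overline{\C}(-,X')\xrightarrow{\sim}D\E(X,-)$ for some $X'\in\C$. Evaluating it at $\overline{\Id_{X'}}$ produces an element of $D\E(X,X')$, that is, a $k$-linear form $\phi\colon\E(X,X')\to\check{k}$; since a natural transformation out of a representable functor is determined by its value on the identity, this isomorphism must coincide with the transformation $\phi_{X,-}$ attached to $\phi$ as in Lemma~\ref{Le3}. Thus $\phi_{X,-}\colon\overline{\C}(-,X')\to D\E(X,-)$ is an isomorphism of functors, so Lemma~\ref{Le4}(1) applies and produces an Auslander--Reiten $n$-exangle ending at $X$, with $X_0$ a non-injective indecomposable direct summand of $X'$; this is exactly what was claimed.

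The argument is essentially bookkeeping around Lemmas~\ref{Le3} and~\ref{Le4}; the only points that need a word of care are that $D\E(X,-)$ and $D\E(-,X)$ descend to the costable and stable categories (immediate from Definition~\ref{De1}), that the injective cogenerator $\check{k}$ provides a linear form not annihilating the nonzero class $\gamma$, and the observation --- needed to feed ``$X\in\C_r$'' into Lemma~\ref{Le4}(1) --- that any natural transformation out of a representable functor is pinned down by its value on an identity morphism. I do not anticipate any genuine obstacle beyond matching these hypotheses correctly.
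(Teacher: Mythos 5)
Your proposal is correct and follows exactly the route the paper intends: the paper gives no separate argument, stating the proposition as a direct consequence of Lemma~\ref{Le3} (for the ``if'' direction, after choosing a linear form $\phi$ with $\phi(\gamma)\neq 0$, which exists since $\check{k}$ is an injective cogenerator and $\gamma\neq 0$) and Lemma~\ref{Le4} (for the ``only if'' direction). Your added bookkeeping — that $D\E(X,-)$ descends to $\overline{\C}$ and that the given natural isomorphism is of the form required by Lemma~\ref{Le4} — is consistent with what the proofs of those lemmas already use.
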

\subsection{Two functors $\tau_n$ and $\tau_n^{-}$}

For each $X$ in $\C_r$, we define $\tau_n X$ to be an object in $\C$ such that there exists an isomorphism of functors
$$\phi_{X,-}:\overline{\C}(-,\tau_n X)\rightarrow D\E(X,-).$$
Then $\tau_n$ gives a map from the objects of $\C_r$ to that of $\C$. Dually, for each $Y$ in $\C_l$,
we define $\tau_n^- Y$ to be an object in $\C$ such that there exists an isomorphism of functors
$$\psi_{-,Y}:\underline{\C}(\tau_n^- Y,-)\rightarrow D\E(-,Y).$$
Then $\tau_n^- $ gives a map from the objects of $\C_l$ to that of $\C$.

We have the following easy observation.

\begin{lemma}\label{lem7}
Let $X$ and $Y$ be two objects in $\C$.
\begin{enumerate}
\item[$(1)$] If $X\in\C_r$ and $X\cong Y$ in $\underline\C$, then $Y\in\C_r$ and $\tau_n X\cong\tau_n Y$ in $\overline\C$.
\item[$(2)$] If $X\in\C_l$ and $X\cong Y$ in $\overline\C$, then $Y\in\C_l$ and $\tau_n^{-} X\cong\tau_n^{-} Y$ in $\underline\C$.
\end{enumerate}
\end{lemma}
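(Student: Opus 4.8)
The plan is to isolate the single structural fact behind the definitions of $\C_r$, $\C_l$, $\tau_n$ and $\tau_n^{-}$ and then apply it mechanically. That fact is: by Definition \ref{De1}, a morphism $f$ of $\C$ is $n$-projectively trivial precisely when $\E(f,-)=0$, and $n$-injectively trivial precisely when $\E(-,f)=0$; hence the additive bifunctor $\E$ (and therefore $D\E$) descends to a functor on $\underline\C$ in its first variable and to a functor on $\overline\C$ in its second variable. Once this is in place, an isomorphism in a stable category is automatically carried to a genuine natural isomorphism of the associated $\E$-functors, and representability together with the representing object (up to the relevant stable isomorphism) is transported along it.

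For $(1)$, I would start from an isomorphism $\underline u\colon X\to Y$ in $\underline\C$ with inverse $\underline v$, choose representatives $u\in\C(X,Y)$ and $v\in\C(Y,X)$, and record that $vu-\id_X\in\P(X,X)$ and $uv-\id_Y\in\P(Y,Y)$. Using additivity of $\E$ and the fact that $\E(f,-)=0$ whenever $f\in\P$, one obtains $\E(u,-)\circ\E(v,-)=\E(vu,-)=\E(\id_X,-)=\id$ and likewise $\E(v,-)\circ\E(u,-)=\id$, so $\E(u,-)\colon\E(Y,-)\to\E(X,-)$ is a natural isomorphism of functors $\C\to\Ab$. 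Since $\E(X,-)$ and $\E(Y,-)$ both annihilate $n$-injectively trivial morphisms, this isomorphism may be regarded as one of functors on $\overline\C$, and applying $D$ yields a natural isomorphism $D\E(u,-)\colon D\E(X,-)\to D\E(Y,-)$ of functors $\overline\C\to k\text{-mod}$ (the values stay in $k\text{-mod}$ by $\Ext$-finiteness). Composing with the isomorphism $\phi_{X,-}\colon\overline\C(-,\tau_nX)\xrightarrow{\ \sim\ }D\E(X,-)$ witnessing $X\in\C_r$ presents $D\E(Y,-)$ as representable, so $Y\in\C_r$ and $\tau_nX$ is a representing object for $D\E(Y,-)$; comparing it with $\tau_nY$ via the Yoneda lemma — representing objects being unique up to isomorphism in $\overline\C$ — gives $\tau_nX\cong\tau_nY$ in $\overline\C$.

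Part $(2)$ is the formal dual and I would just spell it out: an isomorphism $\overline u\colon X\to Y$ in $\overline\C$, with representatives $u,v$ satisfying $vu-\id_X\in\I(X,X)$ and $uv-\id_Y\in\I(Y,Y)$, gives — by additivity of $\E$ and $\E(-,g)=0$ for $g\in\I$ — a natural isomorphism $\E(-,u)\colon\E(-,X)\to\E(-,Y)$, hence $D\E(-,u)\colon D\E(-,Y)\to D\E(-,X)$, of functors on $\underline\C$; composing $\psi_{-,X}\colon\underline\C(\tau_n^{-}X,-)\xrightarrow{\ \sim\ }D\E(-,X)$ with the inverse of $D\E(-,u)$ exhibits $Y\in\C_l$ with $\tau_n^{-}X$ representing $D\E(-,Y)$, and Yoneda gives $\tau_n^{-}X\cong\tau_n^{-}Y$ in $\underline\C$.

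I do not expect a real obstacle: the lemma is, as the text says, an easy observation. The only two points that require a modicum of care are (i) verifying that a \emph{stable} isomorphism becomes an honest natural isomorphism of the $\E$-functors, which uses nothing beyond the additivity of $\E$ and the characterizations $f\in\P\Leftrightarrow\E(f,-)=0$ and $g\in\I\Leftrightarrow\E(-,g)=0$ of Definition \ref{De1}; and (ii) keeping in mind throughout that a representing object is only well-defined up to isomorphism in $\overline\C$, respectively $\underline\C$ — which is exactly the conclusion the lemma asks for.
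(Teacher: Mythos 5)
Your proposal is correct and follows essentially the same route as the paper: the paper's proof simply asserts that a stable isomorphism $X\cong Y$ gives $D\E(X,-)\cong D\E(Y,-)$ as functors and then concludes $\overline\C(-,\tau_nX)\cong\overline\C(-,\tau_nY)$ via Yoneda, which is exactly your argument with the lifting of representatives and the use of Definition \ref{De1} spelled out. The extra details you supply (that $\P$-morphisms are killed by $\E(-,Z)$, hence the stable isomorphism induces a genuine natural isomorphism) are precisely the implicit content of the paper's one-line proof.
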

\begin{proof} We only prove that $(1)$, dually one can prove $(2)$. The first assertion follows from the fact that $D\E(X,-)\cong D\E(Y,-)$ as functors, and then we deduce that $\overline{\C}(-,\tau_n X)\cong \overline{\C}(-,\tau_n Y)$. Hence the second assertion follows from  Yoneda's lemma.
\end{proof}

\begin{proposition}\label{lem11}
\begin{enumerate}
\item[]
\item[$(1)$] If $X_{n+1}\in\C_r$, then $\tau_nX_{n+1}\in\C_l$ and $X_{n+1}\cong\tau_n^-\tau_n X_{n+1}$ in $\underline{\C}$.
\item[$(2)$] If $Y_0\in\C_l$, then $\tau_n^{-}Y_0\in\C_r$ and $Y_0\cong\tau_n\tau_n^- Y_0$ in $\overline{\C}$.

\end{enumerate}
\end{proposition}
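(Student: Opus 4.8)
The plan is to reduce to the case of an indecomposable $X_{n+1}$ and then treat the projective and non-projective subcases separately; statement $(2)$ is the dual of $(1)$, so I only discuss $(1)$. Since $\C$ is Krull-Schmidt, write $X_{n+1}=\bigoplus_{i=1}^{m}X_i$ with each $X_i$ indecomposable. Because $\E$, $D$ and the formation of $\tau_n$ are additive, and because the stable and costable categories of a Krull-Schmidt category are again Krull-Schmidt — so that idempotents split and a direct summand of a representable functor on $\overline{\C}$ stays representable — each $X_i$ lies in $\C_r$ and one may take $\tau_n X_{n+1}=\bigoplus_i\tau_n X_i$. By Lemma \ref{lem7} the assertions of $(1)$ are insensitive to replacing objects by isomorphic ones in $\underline{\C}$ or $\overline{\C}$, and $\C_l$ and $\tau_n^-$ behave additively in the same way, so it suffices to prove $(1)$ when $X_{n+1}$ is indecomposable.

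If $X_{n+1}$ is projective, then $\E(X_{n+1},-)=0$, hence $D\E(X_{n+1},-)=0$ is represented by $0$ and we may take $\tau_n X_{n+1}=0$; then $D\E(-,0)=0$ is representable, so $\tau_n X_{n+1}=0\in\C_l$ and $\tau_n^-\tau_n X_{n+1}=0$. Moreover $\id_{X_{n+1}}$ induces the identity on $\E(X_{n+1},-)=0$, so it is $n$-projectively trivial and $X_{n+1}\cong 0$ in $\underline{\C}$, which gives $X_{n+1}\cong\tau_n^-\tau_n X_{n+1}$ in $\underline{\C}$.

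Now suppose $X_{n+1}$ is indecomposable and non-projective. Since $X_{n+1}\in\C_r$, Proposition \ref{desc}$(1)$ yields an Auslander-Reiten $n$-exangle $X_\bullet\colon X_0\to X_1\to\cdots\to X_n\to X_{n+1}\overset{\gamma}{\dashrightarrow}$; by Lemma \ref{r1} the ring $\End(X_0)$ is local, so $X_0$ is indecomposable, and $\gamma\neq 0$, so since $\check{k}$ is a cogenerator there is a $k$-linear form $\phi\colon\E(X_{n+1},X_0)\to\check{k}$ with $\phi(\gamma)\neq 0$. By Lemma \ref{Le3}$(1)$ the map $\phi_{X_{n+1},-}\colon\overline{\C}(-,X_0)\to D\E(X_{n+1},-)$ is a natural isomorphism, so $\overline{\C}(-,\tau_n X_{n+1})\cong\overline{\C}(-,X_0)$ and $\tau_n X_{n+1}\cong X_0$ in $\overline{\C}$ by Yoneda. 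By Lemma \ref{Le3}$(2)$ the map $\psi_{-,X_0}\colon\underline{\C}(X_{n+1},-)\to D\E(-,X_0)$ is a natural isomorphism, which exhibits $D\E(-,X_0)$ as representable; hence $X_0\in\C_l$, and from $\underline{\C}(\tau_n^-X_0,-)\cong D\E(-,X_0)\cong\underline{\C}(X_{n+1},-)$ we get $\tau_n^-X_0\cong X_{n+1}$ in $\underline{\C}$ by Yoneda. Finally, applying Lemma \ref{lem7}$(2)$ to the isomorphism $X_0\cong\tau_n X_{n+1}$ in $\overline{\C}$ gives $\tau_n X_{n+1}\in\C_l$ and $\tau_n^-\tau_n X_{n+1}\cong\tau_n^-X_0\cong X_{n+1}$ in $\underline{\C}$, completing $(1)$. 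Statement $(2)$ follows by the dual argument, interchanging $\underline{\C}$ with $\overline{\C}$, $\C_r$ with $\C_l$, $\tau_n$ with $\tau_n^-$, and \emph{projective} with \emph{injective}, and using Proposition \ref{desc}$(2)$, Lemma \ref{Le3} and Lemma \ref{lem7}$(1)$.

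The main obstacle is the reduction to indecomposables: one has to check that membership in $\C_r$ (resp. $\C_l$) passes to indecomposable direct summands and that $\tau_n$ (resp. $\tau_n^-$) is compatible with finite direct sums, and this rests on the Krull-Schmidt property of $\underline{\C}$ and $\overline{\C}$ — equivalently, on idempotents splitting there so that summands of representable functors remain representable. Granting this, the non-projective indecomposable case is a formal consequence of Lemmas \ref{Le3} and \ref{lem7} together with Proposition \ref{desc}, and the projective case is immediate.
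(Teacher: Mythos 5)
Your proposal is correct and takes essentially the same route as the paper's proof: in the indecomposable non-projective case both arguments combine Proposition \ref{desc}, Lemma \ref{Le3}, Yoneda's lemma and Lemma \ref{lem7} in exactly the same way. The only difference is that you spell out the reduction that the paper dispatches with ``without loss of generality'' — the passage to indecomposable summands via additivity and the Krull–Schmidt property of $\overline{\C}$ and $\underline{\C}$, and the projective case where $X_{n+1}\cong 0$ in $\underline{\C}$ and $\tau_nX_{n+1}=0$ — which is a harmless (indeed welcome) amplification rather than a different approach.
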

\begin{proof} We just prove the first statement, the second statement proves similarly. Without loss of generality, we assume that $X_{n+1}$ is indecomposable and non-projective in $\C_r$. By Proposition \ref{desc}, there exists an Auslander-Reiten $n$-exangle $$X_{\bullet}:X_0\xrightarrow{}X_1\xrightarrow{}X_2\xrightarrow{}\cdots\xrightarrow{}X_n\xrightarrow{}X_{n+1}\overset{\gamma}{\dashrightarrow}.$$
Then we have $\overline{\C}(-,X_0)\cong D\E(X_{n+1},-)$ and $\underline{\C}(X_{n+1},-)\cong D\E(-,X_0)$ by Lemma \ref{Le3}.
We then obtain $X_0\in\C_l$. It follows from the Yoneda's lemma that $\tau_nX_{n+1}\cong X_0$ in $\overline\C$, and $\tau_n^-X_0\cong X_{n+1}$
in $\underline\C$. So $\tau_nX_{n+1}\in\C_l$ by Lemma \ref{lem7}(2), and hence
$\tau_n^-\tau_n X_{n+1}\cong \tau_n^-X_0\cong X_{n+1}$ in $\underline\C$.
\end{proof}

Let $\underline{\C_r}$ be the image of ${\C_r}$ under the canonical functor $\C\rightarrow \underline{\C}$ and $\underline{\C_l}$ be the image of ${\C_l}$ under the canonical functor $\C\rightarrow \overline{\C}$. For each morphism $f\colon X_{n+1}\to M_{n+1}$ in $\C_r$, we define the morphism $\tau_n(f)\colon\tau_n X_{n+1}\to \tau_n M_{n+1}$ in $\overline{\C_l}$
such that the following diagram commutes
\[\xymatrix{
  \overline\C(-,\tau_n X_{n+1})\ar[r]^-{\phi_{X_{n+1},-}}\ar[d]_-{\overline\C(-,\tau_n(f))}
    &D\E(X_{n+1} ,-)\ar[d]^-{D\E(f,-)}\\
  \overline\C(-,\tau_n M_{n+1})\ar[r]^-{\phi_{M_{n+1},-}}
    &D\E(M_{n+1},-).
}\]
By the Yoneda's lemma, we know that the morphism $\tau_n(f)$ is uniquely determined from the above commutative diagram. Hence it follows that $\tau_n$ is a functor from $\C_r$ to $\overline{\C_l}$. Moreover, if $f$ is n-projectively trivial, then $D\E(f,-)=0$ and thus $\tau_n(f)=0$ in $\overline{\C_l}$. Thus $\tau_n$ induces a functor from
$\underline{\C_r}$ to $\overline{\C_l}$ which we still denote by $\tau_n$.

Similarly, we have a functor $\tau_n^-\colon\overline{\C_l}\to\underline{\C_r}$. For each $\overline g\colon Y_0\to N_0$ in $\overline{\C_l}$,
the morphism $\tau_n^-(\overline g)\colon \tau_n^-Y_0\to\tau_n^-N_0$ is given by the following commutative diagram
\[\xymatrix{
  \underline\C(\tau_n^- N_0,-)\ar[r]^-{\psi_{-,N_0}}\ar[d]_-{\underline\C(\tau_n^-(\overline g),-)}
    &D\E(- ,N_0)\ar[d]^-{D\E(-,g)}\\
  \underline\C(\tau_n^- Y_0,-)\ar[r]^-{\psi_{-,Y_0}}
    &D\E(- ,Y_0).
}\]

For each $X\in\underline{\C_r}$, we set
\[\underline{\vartheta_X}:=\psi_{X,\tau_nX}^{-1}(\phi_{X, \tau_nX}(\overline{\Id_{\tau_nX}}))\colon \tau_n^-\tau_n X\longrightarrow X\]
in $\underline{\C_r}$. Dually, for each $Y\in\overline{\C_l}$, set
\[\overline{\xi_Y}:=\phi_{\tau_n^- Y,Y}^{-1} (\psi_{\tau_n^- Y,Y}(\underline{\Id_{\tau_n^- Y}}))\colon Y\longrightarrow \tau_n\tau_n^- Y\]
in $\overline{\C_l}$.

\begin{theorem}\label{qua}
The two functors $\tau_n\colon\underline{\C_r}\to\overline{\C_l}\  and~~ \tau_n^-\colon\overline{\C_l}\to\underline{\C_r}\
$ are equivalences. Moreover, $\tau_n$ and $\tau_n^-$ are quasi-inverse to each other.

\end{theorem}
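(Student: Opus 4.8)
The plan is to show that the natural transformations $\underline{\vartheta}$ and $\overline{\xi}$ defined just before the theorem are natural isomorphisms, and that they exhibit $\tau_n$ and $\tau_n^-$ as quasi-inverse equivalences. First I would verify that $\underline{\vartheta_X}\colon\tau_n^-\tau_n X\to X$ is an isomorphism in $\underline{\C_r}$ for each $X\in\underline{\C_r}$. Reduce to the case $X$ indecomposable and non-projective (projective summands are killed in $\underline{\C}$, and everything is additive and Krull-Schmidt); then by Proposition~\ref{desc} there is an Auslander-Reiten $n$-exangle $X_\bullet\colon X_0\to\cdots\to X_n\to X\overset{\gamma}{\dashrightarrow}$, and Lemma~\ref{Le3} gives the two natural isomorphisms $\phi_{X,-}\colon\overline\C(-,X_0)\to D\E(X,-)$ and $\psi_{-,X_0}\colon\underline\C(X,-)\to D\E(-,X_0)$, both normalized by the \emph{same} linear form $\phi$ with $\phi(\gamma)\neq 0$. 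Combined with Proposition~\ref{lem11}, this identifies $\tau_n X\cong X_0$ in $\overline\C$ and $\tau_n^-X_0\cong X$ in $\underline\C$, and unwinding the definition of $\underline{\vartheta_X}$ shows it agrees (up to these identifications) with the canonical isomorphism $\tau_n^-X_0\cong X$; hence it is invertible. Dually $\overline{\xi_Y}$ is an isomorphism for $Y\in\overline{\C_l}$.

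Next I would check naturality: that $\underline\vartheta$ is a natural transformation $\tau_n^-\tau_n\Rightarrow \id_{\underline{\C_r}}$ and $\overline\xi$ a natural transformation $\id_{\overline{\C_l}}\Rightarrow\tau_n\tau_n^-$. This is a diagram chase using the two defining commutative squares for $\tau_n(f)$ and $\tau_n^-(\overline g)$ together with the functoriality in $Y$ of the isomorphisms $\phi_{X,-}$ and $\psi_{-,X}$ from Lemma~\ref{Le3}; the key point is that the normalizing linear forms are compatible, so chasing $\overline{\Id}$ and $\underline{\Id}$ around the squares produces exactly the required commutativity. Since each component is an isomorphism, $\underline\vartheta$ and $\overline\xi$ are natural isomorphisms.

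Finally, from the existence of a natural isomorphism $\tau_n^-\tau_n\cong\id_{\underline{\C_r}}$ and $\tau_n\tau_n^-\cong\id_{\overline{\C_l}}$ it follows formally that $\tau_n$ and $\tau_n^-$ are mutually quasi-inverse equivalences between $\underline{\C_r}$ and $\overline{\C_l}$. I expect the main obstacle to be the bookkeeping in the first step: one must be careful that the object $\tau_nX$ is only defined up to isomorphism in $\overline\C$ (and $\tau_n^-Y$ up to isomorphism in $\underline\C$), so the identifications $\tau_nX\cong X_0$ must be tracked through Yoneda's lemma consistently, and one has to confirm that the two linear forms appearing in Lemma~\ref{Le3}(1) and (2) can indeed be taken to coincide (this is built into the normalization $\phi=\phi_{X,X_0}(\overline{\Id_{X_0}})=\psi_{X,X_0}(\underline{\Id_{X}})$, using $\phi(\gamma)\neq 0$). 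Once that compatibility is pinned down, the rest is a routine—if lengthy—chase through the Yoneda correspondences.
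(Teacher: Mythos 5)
Your proposal is correct and shares the paper's overall skeleton (naturality of $\underline\vartheta$ by a diagram chase; componentwise invertibility after reducing to $X$ indecomposable non-projective, using Proposition~\ref{desc}, Lemma~\ref{Le3} and Proposition~\ref{lem11}), but the mechanism you use at the decisive point differs from the paper's. You propose to identify $\underline{\vartheta_X}$ outright with the canonical Yoneda comparison isomorphism obtained from the two compatibly normalized dualities of Lemma~\ref{Le3}; the paper instead never verifies such an identification. It takes the comparison morphism $s\colon X_0\to\tau_n X$ with $\overline\C(-,s)=\phi_{X,-}^{-1}\circ\phi'_{X,-}$, evaluates the normalized form on the almost split extension to get $0\neq\beta'(\eta)=\alpha\bigl(s_\ast(\vartheta_X^\ast\eta)\bigr)$, concludes that $\vartheta_X^\ast\eta$ is non-split, deduces via Lemma~\ref{y1} and the almost split property that $\vartheta_X$ is a split epimorphism in $\C$, and only then combines this with $\tau_n^-\tau_n X\cong X$ in $\underline\C$ (Proposition~\ref{lem11}) and indecomposability. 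Your more formal route does work, and the compatibility you flag is exactly the point that makes it work: by naturality of $\phi_{X,-}$ one has $\phi_{X,X_0}(\overline s)=\phi_{X,\tau_n X}(\overline{\Id_{\tau_n X}})\circ\E(X,s)=\phi$, while the comparison isomorphism $\underline u\colon\tau_n^-\tau_n X\to X$ produced from $\psi_{-,\tau_n X}$, $D\E(-,s)$ and the normalized $\psi'_{-,X_0}$ of Lemma~\ref{Le3}(2) satisfies $\psi_{X,\tau_n X}(\underline u)\circ\E(X,s)=\phi$ as well; since $\overline s$ is invertible in $\overline\C$, the map $\E(X,s)$ is invertible, whence $\underline{\vartheta_X}=\underline u$ is an isomorphism. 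So the bookkeeping you anticipate is genuinely all that is missing, and once done your argument replaces the paper's split-epimorphism/non-split-extension step by pure Yoneda tracking of choices; what the paper's argument buys in exchange is that it never has to track how $\tau_n X$, $\tau_n^-$ and their defining isomorphisms compare with the Auslander-Reiten normalization, at the cost of the concrete computation with $\vartheta_X^\ast\eta$. One small correction: for Step 1 (naturality of $\underline\vartheta$) the normalizing linear forms play no role; the chase only uses the defining squares of $\tau_n(\underline f)$ and $\tau_n^-\tau_n(\underline f)$ and the naturality of $\psi_{-,\tau_n X}$, exactly as in the paper.
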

\begin{proof}
We only prove that $\underline\vartheta: \tau_n^-\tau_n\rightarrow \mbox{Id}_{\underline{\C_r}}$
is a natural isomorphism, the other is similar.

\textbf{Step 1:} We prove that $\underline\vartheta$ is a natural transformation.
Indeed, for each $\underline f\colon X_{n+1}\to U_{n+1}$ in $\underline{\C_r}$, consider the following diagram
  \[\xymatrix@C=1.6cm@+1.5em{
    \overline\C(\tau_n  X_{n+1},\tau_n  X_{n+1})\ar[r]^-{\phi_{ X_{n+1},\tau_n  X_{n+1}}}\ar@{}[dr]|{\clubsuit}\ar[d]|{\overline\C(\tau_n  X_{n+1},\tau_n(\underline f))}
      &D\E(X_{n+1},\tau _n  X_{n+1})\ar[d]|{D\E(f,\tau _n  X_{n+1})}\ar@{}[dr]|{\spadesuit}
      &\underline\C(\tau_n^-\tau_n X_{n+1},X_{n+1})\ar[l]_-{\psi_{X_{n+1},\tau_n  X_{n+1}}}\ar[d]|{\underline\C(\tau_n^-\tau_n X_{n+1},\underline f)}\\
    \overline\C(\tau _n  X_{n+1},\tau _n  U_{n+1})\ar[r]^-{\phi_{U_{n+1},\tau _n  X_{n+1}}}
      &D\E(U_{n+1},\tau _n  X_{n+1})
      &\underline\C(\tau_n^-\tau_n X_{n+1},U_{n+1})\ar[l]_-{\psi_{U_{n+1},\tau_n X_{n+1}}}.
  }\]
The square $\clubsuit$ commutes by the definition of $\tau_n(\underline f)$, and the square $\spadesuit$ commutes since the isomorphism
$\psi_{-,\tau _n X_{n+1}}$ is natural. By a diagram chasing, we have
\[\tau _n(\underline f)=\phi_{U_{n+1},\tau_n X_{n+1}}^{-1}(\psi_{U_{n+1},\tau_n X_{n+1}}(\underline f\circ\underline{\vartheta_{X_{n+1}}})).\]
We also have the following commutative diagram
 \[\xymatrix@C=1.6cm@+1.5em{
    \overline\C(\tau_n  U_{n+1},\tau_n  U_{n+1})\ar[r]^-{\phi_{ U_{n+1},\tau_n  U_{n+1}}}\ar[d]|{\overline\C(\tau_n(\underline f),\tau_n  U_{n+1})}
      &D\E(U_{n+1},\tau _n  U_{n+1})\ar[d]|{D\E(U_{n+1},\tau _n  (\underline f))}\ar@{}[dr]|{\clubsuit}
      &\underline\C(\tau_n^-\tau_n U_{n+1},U_{n+1})\ar[l]_-{\psi_{U_{n+1},\tau_n  U_{n+1}}}\ar[d]|{\underline\C(\tau_n^-\tau_n (\underline f),U_{n+1})}\\
    \overline\C(\tau _n  X_{n+1},\tau _n  U_{n+1})\ar[r]^-{\phi_{U_{n+1},\tau _n  X_{n+1}}}
      &D\E(U_{n+1},\tau _n  X_{n+1})
      &\underline\C(\tau_n^-\tau_n X_{n+1},U_{n+1})\ar[l]_-{\psi_{U_{n+1},\tau_n X_{n+1}}}.
  }\]
The square $\clubsuit$ commutes by the definition of $\tau_n ^-\tau_n (\underline f)$. By a diagram chasing, we have
\[\tau _n(\underline f)=\phi_{U_{n+1},\tau_n X_{n+1}}^{-1}(\psi_{U_{n+1},\tau_n X_{n+1}}(\underline {\vartheta_{U_{n+1}}}\circ\tau_n ^-\tau_n (\underline f))).\]
We then obtain $\underline f\circ\underline{\vartheta_{X_{n+1}}}=\underline {\vartheta_{U_{n+1}}}\circ\tau_n ^-\tau_n (\underline f)$.
It follows that $\underline\vartheta$ is a natural transformation.

\textbf{Step 2:} We prove that $\underline{\vartheta_{X_{n+1}}}$ is an isomorphism for each $X_{n+1}\in\C_r$. Without loss of generality, we assume that $X_{n+1}$ is indecomposable and non-projective in $\C$. Let $\alpha=\psi_{\tau_n^-\tau_n X_{n+1},\tau_n X_{n+1}}(\underline{\Id_{\tau_n^-\tau_n X_{n+1}}})$
in $D\E(\tau_n^-\tau_n X_{n+1},\tau_n X_{n+1})$ and let $\beta=\phi_{X_{n+1},\tau_n X_{n+1}}(\overline{\Id_{\tau_n X_{n+1}}})$ in $D\E(X_{n+1},\tau_n X_{n+1})$.
By the definition of $\underline{\vartheta_{X_{n+1}}}$, we have $\beta = \psi_{X_{n+1},\tau_n X_{n+1}} (\underline{\vartheta_{X_{n+1}}})$.
Consider the following commutative diagram
\[\xymatrix@C+5em{
    \underline\C(\tau_n^-\tau_n X_{n+1},\tau_n^-\tau_n X_{n+1})\ar[r]^-{\psi_{{\tau_n^-\tau_n X_{n+1}},\tau_n X_{n+1}}}\ar[d]_-{\underline\C(\tau_n^-\tau_n X_{n+1},\underline{\vartheta_{X_{n+1}}})}
      &D\E({\tau_n^-\tau_n X_{n+1}},\tau_n X_{n+1})\ar[d]^-{D\E(\vartheta_{X_{n+1}},\tau_n X_{n+1})}\\
    \underline\C(\tau_n^-\tau_n X_{n+1},X_{n+1})\ar[r]^-{\psi_{{ X_{n+1}},\tau_n X_{n+1}}}
      &D\E({ X_{n+1}},\tau_n X_{n+1}).
  }\]
Since $\psi_{\tau_n^-\tau_n X_{n+1},\tau_n X_{n+1}}(\underline{\Id_{\tau_n^-\tau_n X_{n+1}}})=\alpha$ and
$\underline\C(\tau_n^-\tau_n X_{n+1},\underline{\vartheta_{X_{n+1}}})(\underline{\Id_{\tau_n^-\tau_n X_{n+1}}})=\underline{\vartheta_{X_{n+1}}}$, we have
\[\beta = D\E(\vartheta_{X_{n+1}},\tau_n X_{n+1})(\alpha) = \alpha\circ\E(\vartheta_{X_{n+1}},\tau_n X_{n+1}).\]

On the other hand, since $X_{n+1}$ is non-projective in $\C$, $ X_{0}\cong\tau_n X_{n+1}$ in $\overline\C$ is nonzero and then non-injective in $\C$. Thus, there is an isomorphism $\phi_{X_{n+1},-}:\overline{\C}(-,X_{0})\rightarrow D\E(X_{n+1},-)$. By Lemma \ref{Le4}(1), there exists an {Auslander-Reiten $n$-exangle} of the form $$X_{\bullet}:X_0\xrightarrow{}X_1\xrightarrow{}X_2\xrightarrow{}\cdots\xrightarrow{}X_n\xrightarrow{}X_{n+1}\overset{\eta}{\dashrightarrow},$$
By Lemma~\ref{Le3}(1), we have a natural isomorphism $\phi'_{X_{n+1},-}:\overline{\C}(-,X_{0})\rightarrow D\E(X_{n+1},-)$
such that $\phi'_{X_{n+1},X_{0}}(\overline{\Id_{X_{0}}})(\eta)$ $\ne 0$. Setting $\beta':=\phi'_{X_{n+1},X_{0}}(\overline{\Id_{X_{0}}})$,
we have $\beta'(\eta)\neq 0$. By the Yoneda's lemma, there exists some $s\colon X_{0}\to\tau_n X_{n+1}$ such that
$\overline{\C}(-,s) = \phi_{X_{n+1},-}^{-1}\circ\phi'_{X_{n+1},-}$. We thus obtain

 \[\beta'=\phi'_{X_{n+1},X_{0}}(\overline{\Id_{X_{0}}})=(\phi_{X_{n+1},X_{0}}\circ\overline{\C}(-,s))(\overline{\Id_{X_{0}}})=\phi_{X_{n+1},X_{0}}(\overline s).\]
Consider the following commutative diagram
 \[\xymatrix@C+5em{
    \overline{\C}(\tau_n X_{n+1},\tau_n X_{n+1})\ar[r]^-{\phi_{{X_{n+1}},\tau_n X_{n+1}}}\ar[d]_-{\overline{\C}(s,\tau_n X_{n+1})}
      &D\E({ X_{n+1}},\tau_n X_{n+1})\ar[d]^-{D\E({X_{n+1}},s)}\\
    \overline{\C}(X_{0},\tau_n X_{n+1})\ar[r]^-{\phi_{{ X_{n+1}},X_{0}}}
      &D\E({ X_{n+1}},X_{0}).
  }\]
Since $\phi_{{ X_{n+1}},\tau_n X_{n+1}}(\overline{\Id_{\tau_n X_{n+1}}})=\beta$ and $\overline{\C}(s,\tau_n X_{n+1})(\overline{\Id_{\tau_n X_{n+1}}})=\overline s$, we have
\[\beta' = D\E(X_{n+1},s)(\beta) = \beta\circ\E(X_{n+1},s)=\alpha\circ\E(\vartheta_{X_{n+1}},\tau_n X_{n+1})\circ\E(X_{n+1},s).\]
Thus we have
\[0\not = \beta'(\eta) = \alpha(\vartheta_{X_{n+1}}^\ast(s_\ast\eta)) = \alpha(s_\ast(\vartheta_{X_{n+1}}^\ast\eta)),\]
which implies that the distinguished $n$-exangle
$$U_{\bullet}:X_0\xrightarrow{}U_1\xrightarrow{}U_2\xrightarrow{}\cdots\xrightarrow{}U_n\xrightarrow{}\tau_n^-\tau_n X_{n+1}\overset{\vartheta_{X_{n+1}}^\ast\eta}{\dashrightarrow}$$
is non-split. We claim that $\vartheta_{X_{n+1}}\colon\tau_n^-\tau_n X_{n+1}\to X_{n+1}$ is a split epimorphism in $\C$. Otherwise, since $\eta$ is almost split,
we have the following commutative diagram
$$\xymatrix{
U_{\bullet}:& X_0\ar[r]^{}\ar@{}[dr] \ar@{=}[d]^{} &U_1 \ar[r]^{} \ar@{}[dr]\ar[d]^{}&\cdot\cdot\cdot \ar[r]^{} \ar@{}[dr]&U_{n-1} \ar[r]^{}\ar@{}[dr]\ar[d]^{} &U_n \ar[r]^{} \ar@{}[dr]\ar[d]^{}&\tau_n^-\tau_n X_{n+1} \ar@{}[dr]\ar[d]^{\vartheta_{X_{n+1}}}\ar@{-->}[dl]^{} \ar[r]^-{\vartheta_{X_{n+1}}^\ast\eta} &\\
X_{\bullet}:& {X_0}\ar[r]^{} &{X_1}\ar[r]^{}&\cdot\cdot\cdot\ar[r]^{} &{X_{n-1}}  \ar[r]^{} &{X _n}\ar[r]^{}  &{X_{n+1}} \ar@{-->}[r]^-{\eta} &.}
$$
By Lemma \ref{y1}, the top distinguished $n$-exangle is split, which is a contradiction. Moreover, $\tau_n^-\tau_n X_{n+1} \cong X_{n+1}$ in $\underline\C$ by Proposition \ref{lem11}.
Thus $\underline{\vartheta_{X_{n+1}}}$ is an isomorphism in $\underline{\C_r}$.
\end{proof}

\section{Auslander-Reiten ${n}$-exangles via morphisms determined by objects}
\begin{definition}(Auslander \cite{Au})
Let $f\in \C(X, Y)$ and $C\in\C$. The morphism $f$ is called \emph{right $C$-determined} and $C$ is called a \emph{right determiner} of $f$, if the following condition is satisfied: each
$g\in \C(L, Y)$ factors through $f$, provided that for each $h\in\C(C, L)$ the morphism $g\circ h$ factors through $f$.

If moreover $C$ is a direct summand of any right determiner of $f$, we call $C$ a \emph{minimal right determiner} of $f$.
\end{definition}

\begin{lemma}\label{le40}
Consider a morphism of distinguished $n$-exangles of the form
$$\xymatrix{
 Y_0\ar[r]^{\alpha_0}\ar@{}[dr] \ar@{=}[d]^{} &X_1 \ar[r]^{\alpha_1} \ar@{}[dr]\ar[d]^{h_1}&\cdot\cdot\cdot \ar[r]^{\alpha_{n-2}} \ar@{}[dr]&X_{n-1} \ar[r]^{\alpha_{n-1}}\ar@{}[dr]\ar[d]^{h_{n-1}} &X_n \ar[r]^{\alpha_n} \ar@{}[dr]\ar[d]^{h_n}&X_{n+1} \ar@{}[dr]\ar[d]^{h_{n+1}} \ar@{-->}[r]^-{\delta'} &\\
{Y_0}\ar[r]^{\beta_0} &{Y_1}\ar[r]^{\beta_1}&\cdot\cdot\cdot\ar[r]^{\beta_{n-2}} &{Y_{n-1}}  \ar[r]^{\beta_{n-1}} &{Y _n}\ar[r]^{\beta_n}  &{Y_{n+1}} \ar@{-->}[r]^-{\delta} &.}
$$
If $\beta_n$ is right $C$-determined for some object $C$ in $\C$, then $\alpha_n$ is also right $C$-determined.
\end{lemma}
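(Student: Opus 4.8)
The plan is to verify the defining condition of a right $C$-determined morphism directly for $\alpha_n$: given $g\in\C(L,X_{n+1})$ with the property that $g\circ h$ factors through $\alpha_n$ for every $h\in\C(C,L)$, I must show that $g$ itself factors through $\alpha_n$. The whole argument is a short diagram chase that pushes $g$ down the given morphism of $n$-exangles, applies the hypothesis on $\beta_n$, and then reads off the conclusion from the exactness built into a distinguished $n$-exangle.

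First I would transport $g$ to the lower $n$-exangle and feed it to the hypothesis on $\beta_n$. Consider $h_{n+1}\circ g\in\C(L,Y_{n+1})$. For any $h\in\C(C,L)$ choose $t\in\C(C,X_n)$ with $g\circ h=\alpha_n\circ t$; since $(h_0,\dots,h_{n+1})$ is a chain map we have $h_{n+1}\circ\alpha_n=\beta_n\circ h_n$, hence
\[ h_{n+1}\circ g\circ h=\beta_n\circ(h_n\circ t) \]
factors through $\beta_n$. Because $\beta_n$ is right $C$-determined, this forces $h_{n+1}\circ g=\beta_n\circ s$ for some $s\in\C(L,Y_n)$.

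Next I would kill the obstruction class of $g$. The compatibility condition for the morphism $(h_0,h_1,\dots,h_{n+1})$ of distinguished $n$-exangles, together with $h_0=\id_{Y_0}$, yields $\delta'=(h_{n+1})\uas\delta$; combining this with the previous step,
\[ g\uas\delta'=(h_{n+1}\circ g)\uas\delta=(\beta_n\circ s)\uas\delta=s\uas\bigl(\beta_n\uas\delta\bigr). \]
Now $\langle Y_{\bullet},\delta\rangle$, being an $n$-exangle, is in particular an $\E$-attached complex, so $\beta_n\uas\delta=(d^Y_n)\uas\delta=0$; therefore $g\uas\delta'=0$. Finally, applying the first exact sequence of Lemma~\ref{a1} to the distinguished $n$-exangle $\langle X_{\bullet},\delta'\rangle$ and evaluating at $L$ gives exactness of
\[ \C(L,X_n)\xrightarrow{\ \C(L,\alpha_n)\ }\C(L,X_{n+1})\xrightarrow{\ {\delta'}\ssh\ }\E(L,X_0), \]
and since ${\delta'}\ssh(g)=g\uas\delta'=0$ I conclude $g=\alpha_n\circ g'$ for some $g'\in\C(L,X_n)$, i.e. $g$ factors through $\alpha_n$. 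Hence $\alpha_n$ is right $C$-determined.

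The argument is almost entirely formal; the substantive ingredients — and the places where I would be most careful — are the identity $\delta'=(h_{n+1})\uas\delta$ coming from the definition of a morphism of $n$-exangles, and the vanishing $\beta_n\uas\delta=0$ built into the notion of an $n$-exangle. These are exactly what force the obstruction $g\uas\delta'$ to vanish; everything else is Yoneda-style exactness. I do not foresee a genuine obstacle, only the bookkeeping of keeping the variance of $(-)\uas$, $(-)\sas$ and the maps $\C(L,-)$, $\E(L,-)$ straight.
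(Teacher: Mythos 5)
Your proposal is correct and follows essentially the same route as the paper: push $g$ down via $h_{n+1}$, apply the hypothesis on $\beta_n$, and then use $\delta'=(h_{n+1})\uas\delta$ together with exactness of $\C(L,X_n)\to\C(L,X_{n+1})\xrightarrow{\delta'\ssh}\E(L,Y_0)$ to conclude that $g$ factors through $\alpha_n$ (the paper phrases the vanishing of $g\uas\delta'$ via the commutative ladder of exact sequences rather than via $\beta_n\uas\delta=0$, but this is the same computation). The only slip is notational: the target of $\delta'\ssh$ should be $\E(L,Y_0)$, not $\E(L,X_0)$, since the top $n$-exangle begins at $Y_0$.
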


\begin{proof}
Let $g\colon L\to X_{n+1}$ be a morphism such that for each $f\colon C\to L$, there exists some morphism $s\colon C\to X_n$
satisfying $g\circ f=\alpha_n\circ s$. Then we have $h_{n+1}\circ g\circ f=h_{n+1}\circ\alpha_n\circ s=\beta_n\circ h_{n}\circ s$.
Since $\beta_n$ is right $C$-determined, there exists some morphism $v\colon L\to Y _n$ such that $h_{n+1}\circ g=\beta_n\circ v$.
That is to say, we have the following commutative diagram

$$\xymatrix{ & && C\ar[r]^{f}\ar@{-->}[dr]^{s} &L\ar[r]^{g}\ar@/^1pc/@{-->}[dd]^(0.25){v} &X_{n+1}\ar@{}[dr]\ar@{=}[d]^{} &\\
 Y_0\ar[r]^{\alpha_0}\ar@{}[dr] \ar@{=}[d]^{} &X_1 \ar[r]^{\alpha_1} \ar@{}[dr]\ar[d]_{h_1}&\cdot\cdot\cdot \ar[r]^{\alpha_{n-2}} \ar@{}[dr]&X_{n-1} \ar[r]^{\alpha_{n-1}}\ar@{}[dr]\ar[d]_{h_{n-1}} &X_n \ar[r]^{\alpha_n} \ar@{}[dr]\ar[d]_{h_n}&X_{n+1} \ar@{}[dr]\ar[d]^{h_{n+1}} \ar@{-->}[r]^-{\delta'} &\\
{Y_0}\ar[r]^{\beta_0} &{Y_1}\ar[r]^{\beta_1}&\cdot\cdot\cdot\ar[r]^{\beta_{n-2}} &{Y_{n-1}}  \ar[r]^{\beta_{n-1}} &{Y _n}\ar[r]^{\beta_n}  &{Y_{n+1}} \ar@{-->}[r]^-{\delta} &.}
$$
Thus, in order to show that $\alpha_n$ is right $C$-determined, it suffices to find a morphism $r\colon L\to X_n$ such that $g=\alpha_n\circ r$.
Indeed, by  Definition \ref{def1}, we have the following commutative diagram with exact rows
\[\xymatrix@+1.5em{
 \C(L,X_n )\ar[r]^-{\C(T,\alpha_n )}\ar[d]^{\C(T,h_n )}
      & \C(L,X_{n+1})\ar[r]^-{\del'\ssh}\ar[d]^{\C(L,h_{n+1} )}
      &\E(L,Y_0)\ar@{=}[d]^{}\\
    \C(L,Y_n )\ar[r]^-{\C(T,\beta_n )}
      &\C(L,Y_{n+1} )\ar[r]^-{\del\ssh}
      &\E(L,Y_0).
  }\]
Since $\C(L,h_{n+1})(g)=h_{n+1}\circ g=\beta_n\circ v=\C(T,\beta_n)(v)\in\Im~{\C(T,\beta_n)}=\Ker~{\del\ssh}$, we have ${\del'\ssh}(g)={\del\ssh}(\C(L,h_{n+1})(g))=0$, and hence $h\in { \Ker}~\del'\ssh={\Im}~\C(T,\alpha_n)$. Thus there exists a morphism $r\colon L\to X_n$ such that $g=\alpha_n\circ r$.

\end{proof}

The proofs of the following two Lemmas are straightforward, we omit it.
\begin{lemma}\label{lem42}
Let $X_{\bullet}:X_0\xrightarrow{}X_1\xrightarrow{}X_2\xrightarrow{}\cdots\xrightarrow{}X_n\xrightarrow{\alpha}X_{n+1}\overset{\eta}{\dashrightarrow}$ be a distinguished $n$-exangle. Then a morphism $f\colon L\to X_{n+1}$ factors through $\alpha$ in $\C$
if and only if $\underline f$ factors through $\underline\alpha$ in $\underline\C$.
\end{lemma}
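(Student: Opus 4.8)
The plan is to reduce the non-trivial direction to Lemma~\ref{Le0}, exploiting the fact that $\alpha=d_n^X$ is a deflation because it is the last differential of a distinguished $n$-exangle. The forward implication is immediate: if $f=\alpha\circ r$ in $\C$ for some $r\colon L\to X_n$, then applying the canonical functor $\C\to\underline\C$ gives $\underline f=\underline\alpha\circ\underline r$, so $\underline f$ factors through $\underline\alpha$.

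For the converse, suppose $\underline f=\underline\alpha\circ\underline r$ in $\underline\C$, where $r\colon L\to X_n$ is any morphism in $\C$ representing the relevant class. By the definition of $\underline\C$ this means $f-\alpha\circ r\in\P(L,X_{n+1})$, i.e.\ $f-\alpha\circ r$ is $n$-projectively trivial. Since $X_{\bullet}$ is a distinguished $n$-exangle, $\alpha=d_n^X\colon X_n\to X_{n+1}$ is by definition a deflation, so Lemma~\ref{Le0}, implication $(1)\Rightarrow(2)$, applies to the $n$-projectively trivial morphism $f-\alpha\circ r\colon L\to X_{n+1}$ together with the deflation $\alpha$: there exists $s\colon L\to X_n$ with $f-\alpha\circ r=\alpha\circ s$. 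Hence $f=\alpha\circ(r+s)$ factors through $\alpha$ in $\C$, as desired.

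There is essentially no obstacle here; as the authors indicate, the argument is short. The only points to keep in mind are that $n$-projectively triviality of $f-\alpha\circ r$ is exactly the statement that $f$ and $\alpha\circ r$ agree in $\underline\C$, and that the last differential of a distinguished $n$-exangle is a deflation, which is precisely what makes Lemma~\ref{Le0}(2) applicable to absorb the $n$-projectively trivial discrepancy into a genuine factorization through $\alpha$.
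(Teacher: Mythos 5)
Your proof is correct: the paper omits this proof as ``straightforward,'' and your argument is precisely the intended routine verification. Reducing the converse to the fact that the discrepancy $f-\alpha\circ r$ lies in $\P(L,X_{n+1})$ and then absorbing it through the deflation $\alpha$ via Lemma~\ref{Le0}, (1)$\Rightarrow$(2), is exactly the natural route, and the forward direction is immediate from the canonical functor $\C\to\underline\C$.
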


\begin{lemma}\label{lem43}
Let $C$ be an object in $\C$ and $$X_{\bullet}:X_0\xrightarrow{}X_1\xrightarrow{}X_2\xrightarrow{}\cdots\xrightarrow{}X_n\xrightarrow{\alpha}X_{n+1}\overset{\eta}{\dashrightarrow}$$ a distinguished $n$-exangle. Then the following statements are equivalent.
\begin{enumerate}
\item[$(1)$] $\alpha$ is right $C$-determined in $\C$.
\item[$(2)$] $\underline\alpha$ is right $C$-determined in $\underline\C$.
\end{enumerate}
\end{lemma}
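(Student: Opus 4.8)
The plan is to reduce both implications to Lemma \ref{lem42}, which characterizes when a morphism into $X_{n+1}$ factors through $\alpha$ (resp.\ through $\underline\alpha$) in $\C$ (resp.\ in $\underline\C$). The auxiliary fact I would use is that the canonical functor $\C\to\underline\C$ is the identity on objects and surjective on Hom-groups, so that a test morphism $h$ ranging over $\C(C,L)$ and its image $\underline h$ ranging over $\underline\C(C,L)$ cover the ``same'' family of test morphisms. I would also record at the outset that $\alpha=d_n^X$ is a deflation, and note that Lemma \ref{lem42} will be applied with $C$ in the role of $L$ to the composites $g\circ h\colon C\to X_{n+1}$.

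For $(1)\Rightarrow(2)$, I would start from $\underline g\in\underline\C(L,X_{n+1})$ such that $\underline g\circ\underline h$ factors through $\underline\alpha$ for all $\underline h\in\underline\C(C,L)$, choose a lift $g\in\C(L,X_{n+1})$, and observe that for each $h\in\C(C,L)$ one has $\underline{g\circ h}=\underline g\circ\underline h$; Lemma \ref{lem42} then upgrades ``$\underline{g\circ h}$ factors through $\underline\alpha$'' to ``$g\circ h$ factors through $\alpha$ in $\C$''. Right $C$-determinacy of $\alpha$ in $\C$ gives that $g$ factors through $\alpha$, and a final application of Lemma \ref{lem42} pushes this back down to $\underline\C$. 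The implication $(2)\Rightarrow(1)$ is the mirror image: starting from $g\in\C(L,X_{n+1})$ with every $g\circ h$ factoring through $\alpha$, Lemma \ref{lem42} makes every $\underline g\circ\underline h$ factor through $\underline\alpha$; since every element of $\underline\C(C,L)$ is some $\underline h$, right $C$-determinacy of $\underline\alpha$ yields a factorization of $\underline g$ through $\underline\alpha$, and Lemma \ref{lem42} lifts it to a factorization of $g$ through $\alpha$ in $\C$.

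The only delicate point — and it is genuinely mild — is the bookkeeping of the universal quantifiers in the definition of a right determiner: one must be sure that ranging over all test objects $L$ and all test morphisms $C\to L$ is unaffected by passing between $\C$ and $\underline\C$. This is exactly where fullness and bijectivity-on-objects of $\C\to\underline\C$ enter, and once it is noted nothing else is needed beyond Lemma \ref{lem42}. If desired, one could also remark that $n$-projectively trivial test morphisms $h$ are handled automatically, since then $\underline h=0$ and, $\alpha$ being a deflation, $g\circ h$ factors through $\alpha$ by Lemma \ref{Le0}; but this case is already subsumed by the argument above.
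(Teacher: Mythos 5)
Your argument is correct and is exactly the straightforward reduction the paper has in mind when it omits the proof: combine Lemma \ref{lem42} with the fact that the quotient functor $\C\to\underline\C$ is the identity on objects and full, so the test morphisms $h\in\C(C,L)$ and $\underline h\in\underline\C(C,L)$ range over the same data. Nothing further is needed.
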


As a consequence of Lemma \ref{lem43}, we have the following.

\begin{corollary}\label{cor}
Let $C,C'\in\C$ such that $C\simeq C'$ in $\underline\C$ and $$X_{\bullet}:X_0\xrightarrow{}X_1\xrightarrow{}X_2\xrightarrow{}\cdots\xrightarrow{}X_n\xrightarrow{\alpha}X_{n+1}\overset{\eta}{\dashrightarrow}$$ a distinguished $n$-exangle. Then a deflation $\alpha$
is right $C$-determined if and only if it is right $C'$-determined.
\end{corollary}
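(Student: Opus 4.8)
The plan is to reduce the statement to the stable category, where $C$ and $C'$ become isomorphic objects, and then to observe that being right $C$-determined is a property depending on $C$ only up to isomorphism. Concretely, I would first apply Lemma \ref{lem43} twice: it gives that $\alpha$ is right $C$-determined in $\C$ if and only if $\underline\alpha$ is right $C$-determined in $\underline\C$, and likewise with $C'$ in place of $C$. Since that lemma is stated for an arbitrary object, nothing prevents using it for both $C$ and $C'$, so it suffices to prove that $\underline\alpha$ is right $C$-determined in $\underline\C$ if and only if it is right $C'$-determined in $\underline\C$.

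For this last equivalence I would fix an isomorphism $\underline u\colon C\to C'$ in $\underline\C$ with inverse $\underline v\colon C'\to C$, and argue that composition with $\underline u$ sets up a bijection between morphisms $C\to L$ and morphisms $C'\to L$ (for any $L$) compatible with the condition ``factors through $\underline\alpha$''. Assume $\underline\alpha$ is right $C$-determined and let $\underline g\colon L\to X_{n+1}$ satisfy: $\underline g\circ\underline h'$ factors through $\underline\alpha$ for every $\underline h'\colon C'\to L$. Given an arbitrary $\underline h\colon C\to L$, write $\underline h=(\underline h\circ\underline v)\circ\underline u$; then $\underline g\circ(\underline h\circ\underline v)=\underline\alpha\circ\underline t$ for some $\underline t$, hence $\underline g\circ\underline h=\underline\alpha\circ(\underline t\circ\underline u)$ factors through $\underline\alpha$. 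By right $C$-determinacy, $\underline g$ factors through $\underline\alpha$, so $\underline\alpha$ is right $C'$-determined. The reverse implication is obtained by exchanging the roles of $C$ and $C'$ (and of $\underline u$ and $\underline v$).

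I do not expect a genuine obstacle here: once Lemma \ref{lem43} is in hand the argument is a formal manipulation of factorizations through an isomorphism. The only points to double-check are bookkeeping ones --- that the bijection $\underline h\mapsto\underline h\circ\underline v$ really does translate the two defining conditions into one another, and that no smallness or finiteness hypothesis is needed for this step. As an alternative route avoiding the stable category, one could instead lift the isomorphism $C\simeq C'$ to morphisms $u\colon C\to C'$ and $v\colon C'\to C$ in $\C$ with $vu-\Id_C$ and $uv-\Id_{C'}$ being $n$-projectively trivial, and then use Lemma \ref{lem42} to control when compositions factor through the deflation $\alpha$; but passing through Lemma \ref{lem43} keeps the argument shorter and cleaner.
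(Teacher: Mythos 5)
Your proposal is correct and follows exactly the paper's route: the paper's proof is the one-line "apply Lemma \ref{lem43} twice," with the intermediate step (that right determinacy in $\underline\C$ only depends on the determiner up to isomorphism) left implicit, which you simply spell out via the isomorphism $\underline u$, $\underline v$.
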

\begin{proof} The assertion follows by applying Lemma \ref {lem43} twice.
\end{proof}

\begin{lemma}\label{lem49}
Let $$X_{\bullet}:X_0\xrightarrow{}X_1\xrightarrow{}X_2\xrightarrow{}\cdots\xrightarrow{}X_n\xrightarrow{\alpha}X_{n+1}\overset{\eta}{\dashrightarrow}$$
be a distinguished $n$-exangle with $X_0\in\C_l$. Then $\alpha$ is right $\tau_n^- X_0$-determined.
\end{lemma}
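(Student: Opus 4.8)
The plan is to use the functorial description of $\tau_n^-X_0$ together with Lemma~\ref{Le3} to convert the defining condition of "right $\tau_n^-X_0$-determined" into a statement about extensions, and then read it off from the exactness of the long sequence in Lemma~\ref{a1}. First I would fix notation: since $X_0\in\C_l$, there is by definition a natural isomorphism $\psi_{-,X_0}\colon\underline{\C}(\tau_n^-X_0,-)\xrightarrow{\ \simeq\ }D\E(-,X_0)$, and the extension $\eta\in\E(X_{n+1},X_0)$ realizes the given $n$-exangle. By Lemma~\ref{lem43} it suffices to show that $\underline\alpha$ is right $\tau_n^-X_0$-determined in $\underline\C$, so we may work in the stable category throughout; I would abbreviate $T:=\tau_n^-X_0$.

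The core step is the following. Suppose $g\colon L\to X_{n+1}$ is such that for every $h\colon T\to L$ the composite $g\circ h$ factors through $\alpha$; I must show $g$ factors through $\alpha$. By Lemma~\ref{lem42} (and Lemma~\ref{a1} applied to $X_\bullet$), a morphism $L\to X_{n+1}$ factors through $\alpha$ precisely when its image under $\eta\ssh\colon\C(L,X_{n+1})\to\E(L,X_0)$ is zero; likewise $g\circ h$ factors through $\alpha$ iff $(g\circ h)^{\ast}\eta=0$ in $\E(T,X_0)$, equivalently $h^{\ast}(g^{\ast}\eta)=0$. So the hypothesis says: $h^{\ast}(g^{\ast}\eta)=0$ for all $h\in\C(T,L)$, and the goal is $g^{\ast}\eta=0$. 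Now I would invoke the non-degeneracy packaged in $\psi_{-,X_0}$: the element $g^{\ast}\eta\in\E(L,X_0)$ is detected by pairing against morphisms out of $T$, because under the isomorphism $\psi_{L,X_0}\colon\underline{\C}(T,L)\to D\E(L,X_0)$ (built, as in the proof of Lemma~\ref{Le3}, from a non-degenerate $k$-bilinear form $(\epsilon,\underline h)\mapsto\phi(h^{\ast}\epsilon)$ for a suitable linear form $\phi$), an extension $\epsilon\in\E(L,X_0)$ is zero iff $\phi(h^{\ast}\epsilon)=0$ for all $h\in\C(T,L)$. Applying this with $\epsilon=g^{\ast}\eta$ and using the hypothesis $h^{\ast}(g^{\ast}\eta)=0$ (hence $\phi(h^{\ast}(g^{\ast}\eta))=0$) for all $h$, non-degeneracy forces $g^{\ast}\eta=0$, and therefore $g$ factors through $\alpha$ by Lemma~\ref{lem42}. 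This shows $\alpha$ is right $T$-determined.

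The main obstacle I anticipate is not the logical skeleton above but the bookkeeping needed to legitimately produce the non-degenerate bilinear form on $\E(L,X_0)$: Lemma~\ref{Le3} is stated with $X_{n+1}$ indecomposable non-projective (so that an Auslander-Reiten $n$-exangle ending at it exists), whereas here $X_0$ is merely assumed to lie in $\C_l$, not assumed indecomposable or non-injective. I would handle this by reducing to the indecomposable case: decompose $X_0=\bigoplus_i X_0^{(i)}$ into indecomposables, discard the injective summands (which contribute nothing to $D\E(-,X_0)$ and for which $\tau_n^-$ is trivial), apply Lemma~\ref{Le4}(2) to each remaining non-injective indecomposable summand to get an Auslander-Reiten $n$-exangle starting there, and then note that $\tau_n^-X_0\cong\bigoplus_i\tau_n^-X_0^{(i)}$ in $\underline\C$ with a corresponding direct-sum decomposition of the pairing; by Corollary~\ref{cor} the property of being right $C$-determined only depends on $C$ up to stable isomorphism and passes through finite direct sums, so the general case follows from the indecomposable one. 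A secondary, purely technical point is checking that the isomorphism $\psi_{-,X_0}$ in the definition of $\C_l$ can be realized by exactly the bilinear-form construction of Lemma~\ref{Le3} (so that "$\psi_{L,X_0}$ is an isomorphism" really does give the two-sided non-degeneracy used above); this is the same normalization argument carried out in the proof of Lemma~\ref{Le4}, picking $\phi:=\psi_{\tau_n^-X_0,X_0}(\underline{\Id})$, and I would simply cite it.
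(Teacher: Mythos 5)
Your core step is, in fact, exactly the paper's proof: set $\varphi:=\psi_{\tau_n^- X_0,X_0}(\underline{\Id_{\tau_n^- X_0}})$, use naturality of $\psi_{-,X_0}$ to write $\psi_{L,X_0}(\underline h)(\epsilon)=\varphi(h^\ast\epsilon)$, deduce from the hypothesis (via Lemma \ref{y1}) that $\varphi\bigl(h^\ast(g^\ast\eta)\bigr)=0$ for all $h\colon\tau_n^- X_0\to L$, conclude $g^\ast\eta=0$, and apply Lemma \ref{y1} again to get the factorization; the paper does not even pass to $\underline\C$ via Lemma \ref{lem43}, and your extra reduction there is harmless. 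Where you genuinely diverge is in how you justify the detection statement ``$\epsilon=0$ iff $\varphi(h^\ast\epsilon)=0$ for all $h$'': you treat this as the main obstacle and propose to manufacture a non-degenerate bilinear form by decomposing $X_0$ into indecomposables, discarding injective summands, and invoking Lemma \ref{Le4}(2) and Lemma \ref{Le3} to obtain Auslander--Reiten $n$-exangles for each summand. That detour can be made to work (modulo the routine but unstated fact that $\C_l$ is closed under direct summands, which you need in order to apply Lemma \ref{Le4}(2) to each $X_0^{(i)}$, and noting that Corollary \ref{cor} says nothing about direct sums, although enlarging a determiner is trivially harmless), but it is unnecessary: only one-sided non-degeneracy is needed, and it already follows from $X_0\in\C_l$ alone. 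Since $\psi_{L,X_0}$ is surjective, the functionals $\varphi\circ\E(h,X_0)$ with $h\in\C(\tau_n^- X_0,L)$ exhaust $D\E(L,X_0)$, and because $\check{k}$ is an injective cogenerator these functionals separate points of $\E(L,X_0)$; hence $\varphi\bigl(h^\ast(g^\ast\eta)\bigr)=0$ for all $h$ already forces $g^\ast\eta=0$. This is the paper's shorter argument, valid for arbitrary $X_0\in\C_l$ with no AR-theoretic input and no case analysis, whereas your route only re-derives the same pairing summand by summand.
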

\begin{proof}
Let $f\in\C(L,X_{n+1})$ such that for each $g\in\C(\tau_n^- X_0,L)$, the morphism $f \circ g$ factors through $\alpha$, that is,
there exists the following commutative diagram
$$
\xymatrix@C=0.5cm@R=0.3cm{&&&&&\tau_n^- X_0\ar[d]^{^{ g}}\ar@{..>}[ddl]^{ }&\\
&&&&&L\ar[d]^{^{f}}&\\
{X_0}\ar[r]^{} &{X_1}\ar[r]^{}&\cdot\cdot\cdot\ar[r]^{} &{X_{n-1}}  \ar[r]^{} &{X _n}\ar[r]^{\alpha}  &{X_{n+1}} \ar@{-->}[r]^-{\eta} &.}
$$
By Lemma \ref{y1}, $(f \circ g)^\ast\eta=0$. Since $X_0\in\C_l$, there exists a natural isomorphism
$$\psi_{-,X_0}:\underline{\C}(\tau_n^- X_0,-)\rightarrow D\E(-,X_0).$$
Set $\varphi: = \psi_{\tau_n^- X_0,X_0}(\underline{\Id_{\tau_n^- X_0}})$.
By the naturality of $\psi_{-,X_0}$, we have the following commutative diagram
\[\xymatrix@C=1.6cm{
\underline{\C}(\tau_n^- X_0,\tau_n^- X_0)\ar[r]^-{\psi_{\tau_n^- X_0,X_0}}\ar[d]_-{\underline{\C}(\tau_n^- X_0,\underline{g})}
&D\E(\tau_n^- X_0,X_0)\ar[d]^-{D\E(g,X_0)}\\
\underline{\C}(\tau_n^- X_0,L)\ar[r]^-{\psi_{L,X_0}}
&D\E(L,X_0).
}\]
Thus, we have $
\psi_{L,X_0}(\underline g)
= D\E(g,X_0)(\varphi)
= \varphi\circ\E(g,X_0).
$
Therefore,
\[
\psi_{L,X_0}(\underline g)(f^\ast\eta)
= \varphi(g^\ast f^\ast\eta)
= \varphi((f\circ g)^\ast\eta)
= 0.
\]
Note that $\psi_{L,X_0}(\underline g)$ runs over all maps in $D\E(L,X_0)$, when $\underline g$ runs over all morphisms in $\underline\C(\tau_n^- X_0,L)$.
It follows that $f^\ast\eta=0$ and hence, by Lemma \ref{y1}, the morphism $f$ factors through $\alpha$. Thus $\alpha$ is right $\tau_n^- X_0$-determined.
\end{proof}
The following result is essentially contained in \cite[Theorem 4.3]{JL}. However, it can be
extended to our setting.

\begin{proposition}\label{pro1}
\rm Let $C\in\C_r$ and $X_{n+1}\in\C$, and let $H$ be a right $\End_{\C}(C)$-submodule of $\C(C,X_{n+1})$ satisfying $\P(C,X_{n+1})\subseteq H$.
Then there exists a distinguished $n$-exangle
$$X_{\bullet}:X_0\xrightarrow{}X_1\xrightarrow{}X_2\xrightarrow{}\cdots\xrightarrow{}X_n\xrightarrow{\alpha}X_{n+1}\overset{\eta}{\dashrightarrow}$$
such that $\alpha$ is right $C$-determined, $X_0\in{\rm{\add}}~(\tau_n C)$ and $H={\rm{\Im}}~\C(C,\alpha)$.
\end{proposition}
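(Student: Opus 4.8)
The plan is to mimic the construction in \cite[Theorem 4.3]{JL} but phrased via the functor $\tau_n$ and Auslander-Reiten theory in the $n$-exangulated setting. First I would reduce to the indecomposable case: write $C=\bigoplus C_i$ with each $C_i$ indecomposable, handle projective summands separately (for those $\P(C_i,-)=\C(C_i,-)$, so the corresponding part of $H$ is forced and contributes a split summand), and take direct sums of the $n$-exangles obtained for the non-projective indecomposable summands; right $C$-determinedness and $\add$-membership of $X_0$ are stable under such finite direct sums. So assume $C$ is non-projective indecomposable in $\C_r$. By Proposition \ref{desc} there is an Auslander-Reiten $n$-exangle ending at $C$, and by Lemma \ref{Le3} we get a non-degenerate pairing and a natural isomorphism $\phi_{C,-}\colon\overline\C(-,\tau_nC)\xrightarrow{\sim}D\E(C,-)$, together with $\psi_{-,\tau_nC}\colon\underline\C(C,-)\xrightarrow{\sim}D\E(-,\tau_nC)$ (using $\tau_nC\in\C_l$ from Proposition \ref{lem11}).

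Next I would produce the $n$-exangle. Consider $\C(C,X_{n+1})/H$, a finitely generated $k$-module; applying $D$ gives $D(\C(C,X_{n+1})/H)\subseteq D\C(C,X_{n+1})$, or rather a quotient situation. The key move is: the inclusion $\P(C,X_{n+1})\subseteq H$ means $H/\P(C,X_{n+1})$ is a submodule of $\underline\C(C,X_{n+1})$, and $\underline\C(C,X_{n+1})\cong D\E(X_{n+1},\tau_nC)$ is a submodule situation under $D$. Dualizing the inclusion $H/\P(C,X_{n+1})\hookrightarrow \underline\C(C,X_{n+1})$ and transporting through $\psi$, I get a surjection $\E(X_{n+1},\tau_nC)\twoheadrightarrow D(H/\P(C,X_{n+1}))$; lifting a generating set (or more cleanly, choosing $X_0\in\add(\tau_nC)$ and a suitable extension $\eta\in\E(X_{n+1},X_0)$) realizes the required data. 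Concretely: let $X_0=(\tau_nC)^m$ for appropriate $m$ and pick $\eta\in\E(X_{n+1},X_0)$ so that, under the isomorphism $\C(C,X_{n+1})\cong$ (something built from $D\E(X_{n+1},-)$ evaluated appropriately), the image of $\C(C,\alpha)$ is exactly $H$. Realize $\eta$ by a distinguished $n$-exangle $X_{\bullet}$ via $\s$; its deflation is $\alpha$. That $X_0\in\add(\tau_nC)$ holds by construction, and $H=\Im\,\C(C,\alpha)$ follows from the exact sequence $\C(C,X_n)\xrightarrow{\C(C,\alpha)}\C(C,X_{n+1})\xrightarrow{\eta\ssh}\E(C,X_0)$ of Lemma \ref{a1} together with the defining property of $\eta$: $\Ker(\eta\ssh)=\Im\,\C(C,\alpha)$, and $\eta\ssh$ is chosen (via $\phi_{C,X_0}$) precisely so that its kernel is $H$.

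Finally I would verify that $\alpha$ is right $C$-determined. For this I expect to invoke Lemma \ref{lem49}: since $X_0\in\add(\tau_nC)\subseteq\C_l$ (using Lemma \ref{lem7} and that $\C_l$ is closed under summands and finite direct sums, which should be checked quickly from the definition), Lemma \ref{lem49} gives that $\alpha$ is right $\tau_n^-X_0$-determined; and $\tau_n^-X_0\cong\tau_n^-\tau_nC$ which is $\cong C$ in $\underline\C$ by Proposition \ref{lem11}, so Corollary \ref{cor} upgrades this to ``right $C$-determined''.

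\textbf{Main obstacle.} The delicate point is the precise construction of $\eta$ so that simultaneously $X_0\in\add(\tau_nC)$ and $\Im\,\C(C,\alpha)=H$ exactly (not just up to $\P(C,X_{n+1})$); this requires carefully matching the module-theoretic duality $D(\C(C,X_{n+1})/H)$ against $\E(X_{n+1},X_0)$ through the natural isomorphism $\phi_{C,-}$ and choosing $X_0$ and the class of $\eta$ compatibly — essentially reconstructing the proof of \cite[Theorem 4.3]{JL} with $D\E$ in place of the bifunctor used there. The rest (direct-sum reductions, determinedness) is formal given the lemmas already established. Everything uses the standing Krull-Schmidt, $\Ext$-finite, $k$-linear hypotheses, which guarantee the relevant $\Hom$ and $\E$ groups are finitely generated $k$-modules so that $D$ behaves well.
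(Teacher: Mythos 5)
Your proposal has the right scaffolding (use $\tau_nC\in\C_l$ from Proposition \ref{lem11}, take $X_0\in\add(\tau_nC)$, and get right $C$-determinedness from Lemma \ref{lem49} plus Corollary \ref{cor}), but the central step is exactly the one you leave unproved. Saying ``pick $\eta\in\E(X_{n+1},X_0)$ so that the image of $\C(C,\alpha)$ is exactly $H$'' is not a construction: by Lemma \ref{a1} the condition $\Im\,\C(C,\alpha)=H$ means $\{f\in\C(C,X_{n+1})\mid f^\ast\eta=0\}=H$, and producing such an $\eta$ is the whole content of the proposition. The paper's mechanism, which your ``Main obstacle'' paragraph defers rather than supplies, is: set $\underline H=H/\P(C,X_{n+1})$, form its annihilator $\underline H^{\perp}\subseteq\E(X_{n+1},\tau_nC)$ under the pairing coming from $\phi_{X_{n+1},\tau_nC}\colon\underline\C(C,X_{n+1})\xrightarrow{\sim}D\E(X_{n+1},\tau_nC)$, use $\Ext$-finiteness to write $\underline H^{\perp}=\sum_i\eta_i\End_\C(C)$ with finitely many generators, realize each $\eta_i$ by a distinguished $n$-exangle starting at $\tau_nC$ with deflation $\alpha_i$, pull back $\bigoplus_i\alpha_i$ along the diagonal $X_{n+1}\to X_{n+1}^{\oplus m}$ to get $\alpha$ with $X_0=(\tau_nC)^{\oplus m}$ and $\Im\,\C(C,\alpha)=\bigcap_i\Im\,\C(C,\alpha_i)$, and then prove the two identities ${}^{\perp}(\eta_i\End_\C(C))=\Im\,\C(C,\alpha_i)/\P(C,X_{n+1})$ and $\underline H={}^{\perp}(\underline H^{\perp})$, the latter via the duality $D$. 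Without this double-annihilator computation your argument establishes nothing beyond what Lemma \ref{lem49} already gives, so there is a genuine gap.

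A secondary problem is your opening reduction to non-projective indecomposable $C$, which the paper does not make and which, as you state it, does not work: the $n$-exangles produced for the various summands $C_i$ all end at the same object $X_{n+1}$, so you cannot ``take direct sums''--- you must pull back $\bigoplus_i\alpha^{(i)}$ along the diagonal, after which $\Im\,\C(C,\alpha)$ is an intersection $\bigcap_i\Im\,\C(C,\alpha^{(i)})$, and identifying that intersection with $H=\bigoplus_j He_j$ requires using the cross-terms of the $\End_\C(C)$-module structure (that $He_j\circ\C(C_i,C_j)\subseteq He_i$) together with right $C_i$-determinedness of each $\alpha^{(i)}$. None of this appears in your sketch, and the paper sidesteps it entirely by running the annihilator argument for an arbitrary $C\in\C_r$ at once.
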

\begin{proof}
By Proposition \ref{lem11}, we have $\tau_n C\in\C_l$ and $\tau_n^-\tau_n C\cong C$ in $\underline\C$. Then there exists a natural isomorphism
\[
\phi_{-,\tau_nC}\colon\underline\C(C,-)
\longrightarrow D\E(-,\tau_n C).
\]
Set $\varphi:=\phi_{C,\tau_nC}(\underline{\Id_C})$. By the naturality of $\phi_{-,\tau_nC}$, for each object $Y_{n+1}$ and each $f\in\C(C,Y_{n+1})$,
we have the following commutative diagram
\[\xymatrix@C=1.6cm{
\underline{\C}(C,C)\ar[r]^-{\phi_{C,\tau_nC}}\ar[d]_-{\underline{\C}(C,\underline{f})}
&D\E(C,\tau_nC)\ar[d]^-{D\E(f,\tau_nC)}\\
\underline{\C}(C,Y_{n+1})\ar[r]^-{\phi_{Y_{n+1},\tau_nC}}
&D\E(Y_{n+1},\tau_nC).
}\]
Hence $
\phi_{Y_{n+1},\tau_nC}(\underline f)
= D\E(f,\tau_nC)(\varphi)
= \varphi\circ\E(f,\tau_nC).
$ Then for each $\mu\in\E(Y_{n+1},\tau_nC)$, we have
\[
\phi_{Y_{n+1},\tau_nC}(\underline f)(\mu)
= \varphi(\E(f,\tau_nC)(\mu))
= \varphi(f^\ast\mu).
\]
Set $\underline H:=H/\P(C,X_{n+1})$ and
\[
\underline H^\perp:
= \set{\mu\in\E(X_{n+1},\tau_nC) \middle|
\phi_{X_{n+1},\tau_nC}(\underline h)(\mu) = 0
\mbox{ for each }
\underline h \in \underline H}.
\]
We observe that $\underline H^\perp$ is a left $\End_{\C}(C)$-submodule of $\E(X_{n+1},\tau_nC)$.
where the left $\End_{\C}(C)$-module structure is given by $f\mu=:\tau_n(\underline{f})_\ast\mu$ for any $f\in\End_{\C}(C)$.
Then there exists finitely many $\eta_1,\eta_2,\cdots,\eta_n$ in $\E(X_{n+1},\tau_nC)$ such that $\underline H^\perp = \sum\limits_{i=1}^n \eta_i\End_{\C}(C)$.
Assume that $\eta_i$ is represented by the distinguished $n$-exangle
$$\tau_nC\xrightarrow{}X_1^i\xrightarrow{}X_2^i\xrightarrow{}\cdots\xrightarrow{}X_n^i\xrightarrow{\alpha_i}X_{n+1}\overset{\eta_i}{\dashrightarrow}$$
for each $i=1,2,\cdots,n$. Then $\alpha_i$ is right $\tau_n^-\tau_n C$-determined by Lemma~\ref{lem49}, and thus $\alpha_i$ is right $C$-determined by Corollary~\ref{cor}.
Note that $\bigoplus\limits_{i=1}^n \alpha_i$ is a deflation. One can check that $\bigoplus\limits_{i=1}^n \alpha_i$ is right $C$-determined.
Consider the following commutative diagram
$$\xymatrix{
\bigoplus\limits_{i=1}^n\tau_nC\ar[r]^{}\ar@{}[dr] \ar@{=}[d]^{} &X_1 \ar[r]^{} \ar@{}[dr]\ar[d]^{}&\cdot\cdot\cdot \ar[r]^{}\ar@{}[dr]&X_{n-1} \ar[r]^{}\ar@{}[dr]\ar[d]^{} &X_n \ar[r]^{\alpha} \ar@{}[dr]\ar[d]^{}&X_{n+1} \ar@{}[dr]\ar[d]^{\Delta} \ar@{-->}[r]^-{\vartriangle^\ast({\bigoplus\eta_i})} &\\
{\bigoplus\limits_{i=1}^n\tau_nC}\ar[r]^{} &{\bigoplus\limits_{i=1}^nX_1^i}\ar[r]^{}&\cdot\cdot\cdot\ar[r]^{} &{\bigoplus_{i=1}^nX_{n-1} ^i}  \ar[r]^{} &{\bigoplus\limits_{i=1}^nX_{n} ^i}\ar[r]^{\bigoplus\limits_{i=1}^n\alpha_{i}}  &{\bigoplus\limits_{i=1}^nX_{n+1} } \ar@{-->}[r]^-{\bigoplus\eta_i} &,}
$$
where $\Delta=(\mbox{Id}_{X_{n+1}},\mbox{Id}_{X_{n+1}},...,\mbox{Id}_{X_{n+1}})^{T}$. We have that $\alpha$ is a deflation and $\bigoplus\limits_{i=1}^n \tau_n C\in\add(\tau_n C)$. By Lemma~\ref{le40},
we have that $\alpha$ is right $C$-determined. By a direct verification, we have
$
\Im ~\C(C,\alpha)
= \bigcap\limits_{i=1}^n \Im~\C(C,\alpha_i).
$
For each $i=1,2,\cdots,n$, set
\[
^\perp(\eta_i \End_{\C}(C)):
=\set{\underline h \in \underline\C(C,X_{n+1}) \middle|
\phi_{X_{n+1},\tau_nC}(\underline{h})(\mu) = 0
\mbox{ for each }
\mu \in \eta_i \End_{\C}(C)}.
\]
We observe that $^\perp(\eta_i\End_{\C}(C))$ is a right $\End_{\C}(C)$-submodule of $\underline\C(C,X_{n+1})$.
Note that $\P(C,X_{n+1})\subseteq\Im~\C(C,\alpha_i)$, since $\alpha_i$ is a deflation. We claim that $$^\perp(\eta_i\End_{\C}(C))= \Im ~\C(C,\alpha_i)/\P(C,X_{n+1}).$$
Let $h\colon C\to X_{n+1}$ be a morphism in $\Im~\C(C,\alpha_i)$. We have $h^\ast\eta_i$ splits. Then
Then we have
\[
\phi_{X_{n+1},\tau_nC}(\underline h)(\tau_n(\underline f).\eta_i)
= \varphi((\tau_n(\underline f).\eta_i).h)
= \varphi(\tau_n(\underline f) (\eta_i.h))
= 0,
\]
for each $f\colon C\to C$. It follows that
$\Im~\C(C,\alpha_i)/\P(C,X_{n+1})\subseteq {^\perp(\eta_i\End_{\C}(C))}$.

On the other hand, let $h\in\C(C,X_{n+1})$  such that $\underline h\in{^\perp(\eta_i\End_{\C}(C))}$.
Then we have $\phi_{X_{n+1},\tau_nC}(\underline h)(\tau_n(\underline f)_\ast\eta_i)=0$ for each $f\colon C\to C$.
Consider the following commutative diagram
\[\xymatrix@C=1.6cm{
\underline{\C}(C,X_{n+1})\ar[r]^-{\phi_{X_{n+1},\tau_nC}}\ar[d]_-{\underline{\C}(\underline{f},X_{n+1})}
&D\E(X_{n+1},\tau_nC)\ar[d]^-{D\E(X_{n+1},\tau_n\underline{f})}\\
\underline{\C}(C,X_{n+1})\ar[r]^-{\phi_{X_{n+1}},\tau_nC}
&D\E(X_{n+1},\tau_nC).
}\]
By a diagram chasing, we have
\[\begin{split}
\phi_{X_{n+1},\tau_nC}(\underline{h} \circ \underline{f})
&= \phi_{X_{n+1},\tau_nC} ( {\underline \C}(\underline{f}, X_{n+1}) (\underline{h}) )\\
&= (D\E(X_{n+1}, \tau_n(\underline{f})) \circ \phi_{X_{n+1},\tau_nC}) (\underline{h})\\
&= \phi_{X_{n+1},\tau_nC}(\underline{h}) \circ \E(X_{n+1}, \tau_n(\underline{f})).\\
\end{split}\]
Then for each $\eta_i$, we have $
\phi_{X_{n+1},\tau_nC}(\underline h\circ\underline f)(\eta_i)
= \phi_{X_{n+1},\tau_nC}(\underline h)(\tau_n(\underline f)_\ast\eta_i)
= 0.
$
This implies
\[
\phi_{C,\tau_nC}(\underline f)(h^\ast\eta_i)
= \varphi(f^\ast h^\ast\eta_i)
= \varphi((h\circ f)^\ast\eta_i)
= \phi_{X_{n+1},\tau_nC}(\underline{h\circ f})(\eta_i)
= 0.
\]
We observe that $\phi_{C,\tau_nC}(\underline f)$ runs over all maps in $D\E(C,\tau_n C)$, when $\underline f$ runs over all morphisms
in $\underline\End_\C(C)$. It follows that $h^\ast\eta_i$ splits and the morphism $h$ factors through $\alpha_i$.
We then obtain $h\in \Im~\C(C,\alpha_i)$, which means that
${^\perp(\eta_i\End_{\C}(C))}\subseteq\Im~\C(C,\alpha_i)/\P(C,X_{n+1})$. Moreover, we observe that
\[
\underline H
= {^\perp(\underline H^\perp)}
= {^\perp(\sum_{i=1}^n\eta_i\End_{\C}(C))}
= \bigcap_{i=1}^n {^\perp(\eta_i\End_{\C}(C))},
\]
where the first equality follows from the isomorphism $\phi_{X_{n+1},\tau_nC}$. It follows that
\[
\underline H
= \bigcap_{i=1}^n \Im~ \C(C,\alpha_i)/\P(C,X_{n+1})
= \Im~ \C(C,\alpha)/\P(C,X_{n+1}).
  \]
Then the assertion follows since $\P(C,X_{n+1})\subseteq H$.
\end{proof}
Before consider the converse of Proposition \ref{pro1}, we need some preparations.
In what follows, we always assume that the following condition, analogous to the (WIC) Condition
in \cite[Condition 5.8]{NP}.
\begin{condition}\label{cd}
Let $f \in \C(A,B)$, $g \in\C(B,C)$ be any composable pair of morphisms.  Consider the following
conditions.

(1) If $g \circ f$ is a deflation, then so is $g$.

(2) If $g \circ f$ is an inflation, then so is $f$.
\end{condition}
A morphism $g\colon Z\to Y$ is said to \emph{almost factor through} $f\colon X\to Y$, if $g$ does not factor through $f$,
and for each object $T$ and each morphism $h\in\rad_{\C}(T,Z)$, the morphism $g \circ h$ factors through $f$ (see \cite{Ri}).

\begin{lemma}\label{ras}
Consider a morphism of distinguished $n$-exangles as follows:
$$\xymatrix{
 Y_0\ar[r]^{\alpha_0}\ar@{}[dr] \ar@{=}[d]^{} &X_1 \ar[r]^{\alpha_1} \ar@{}[dr]\ar[d]^{h_1}&\cdot\cdot\cdot \ar[r]^{\alpha_{n-2}} \ar@{}[dr]&X_{n-1} \ar[r]^{\alpha_{n-1}}\ar@{}[dr]\ar[d]^{h_{n-1}} &X_n \ar[r]^{\alpha_n} \ar@{}[dr]\ar[d]^{h_n}&X_{n+1} \ar@{}[dr]\ar[d]^{h_{n+1}} \ar@{-->}[r]^-{\delta'} &\\
{Y_0}\ar[r]^{\beta_0} &{Y_1}\ar[r]^{\beta_1}&\cdot\cdot\cdot\ar[r]^{\beta_{n-2}} &{Y_{n-1}}  \ar[r]^{\beta_{n-1}} &{Y _n}\ar[r]^{\beta_n}  &{Y_{n+1}} \ar@{-->}[r]^-{\delta} &.}
$$
If $X_{n+1}$ is indecomposable and $h_{n+1}$ almost factors through $\beta_n$, then $\alpha_n$ is right almost split.
\end{lemma}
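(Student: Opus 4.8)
The plan is to show that $\alpha_n$ is right almost split by verifying the two defining properties: $\alpha_n$ is not a split epimorphism, and every non-split-epimorphism $g\colon T\to X_{n+1}$ factors through $\alpha_n$. First I would observe that $\alpha_n$ cannot be a split epimorphism: if it were, then the top distinguished $n$-exangle $\langle X_\bullet,\delta'\rangle$ would be split by Corollary \ref{y2}, so $\delta'=0$; but then $\delta=(h_{n+1})^\ast$ applied to something trivial would force $h_{n+1}\circ\alpha_n$ to factor suitably and ultimately $h_{n+1}$ itself to factor through $\beta_n$, contradicting the hypothesis that $h_{n+1}$ \emph{almost} factors through $\beta_n$ (in particular does not factor through $\beta_n$). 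More carefully, if $\delta'=0$ then by the morphism of $n$-exangles and Lemma \ref{y1}(3) applied in the appropriate direction, $(h_{n+1})^\ast\delta$ would vanish on the relevant piece, and chasing this through Lemma \ref{y1} yields a factorization of $h_{n+1}$ through $\beta_n$ — the desired contradiction.

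Next, let $g\colon T\to X_{n+1}$ be any morphism that is not a split epimorphism; I must show $g$ factors through $\alpha_n$. Consider the composite $h_{n+1}\circ g\colon T\to Y_{n+1}$. Since $X_{n+1}$ is indecomposable, $\mathrm{End}(X_{n+1})$ is local, so a morphism $T\to X_{n+1}$ is a split epimorphism if and only if it is not in $\rad_{\C}(T,X_{n+1})$ — hence $g\in\rad_{\C}(T,X_{n+1})$. Therefore, by the hypothesis that $h_{n+1}$ almost factors through $\beta_n$, the morphism $h_{n+1}\circ g$ factors through $\beta_n$, say $h_{n+1}\circ g=\beta_n\circ v$ for some $v\colon T\to Y_n$. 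Now apply the long exact sequence from Definition \ref{def1}(1) (as in Lemma \ref{le40}): from the morphism of $n$-exangles we get a commutative ladder
\[\xymatrix@+1em{
 \C(T,X_n)\ar[r]^-{\C(T,\alpha_n)}\ar[d]^{\C(T,h_n)}
      & \C(T,X_{n+1})\ar[r]^-{\delta'\ssh}\ar[d]^{\C(T,h_{n+1})}
      &\E(T,Y_0)\ar@{=}[d]\\
    \C(T,Y_n)\ar[r]^-{\C(T,\beta_n)}
      &\C(T,Y_{n+1})\ar[r]^-{\delta\ssh}
      &\E(T,Y_0)
  }\]
with exact rows. Then $\C(T,h_{n+1})(g)=h_{n+1}\circ g=\beta_n\circ v\in\Im\,\C(T,\beta_n)=\Ker\,\delta\ssh$, so $(\delta'\ssh)(g)=(\delta\ssh)(\C(T,h_{n+1})(g))=0$; exactness of the top row gives $g\in\Ker\,\delta'\ssh=\Im\,\C(T,\alpha_n)$, i.e. $g$ factors through $\alpha_n$. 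Combined with the first paragraph, this shows $\alpha_n$ is right almost split.

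The main obstacle I anticipate is the first paragraph — proving $\alpha_n$ is not a split epimorphism. The statement "$h_{n+1}$ almost factors through $\beta_n$" includes the clause "$h_{n+1}$ does not factor through $\beta_n$", and I need to convert a splitting of the top $n$-exangle into such a factorization. This requires tracking the realization data carefully: one uses that $\langle X_\bullet,\delta'\rangle$ split means $\delta'=0$, then that the ladder is a morphism $\langle X_\bullet,\delta'\rangle\to\langle Y_\bullet,\delta\rangle$ of $n$-exangles, and applies Lemma \ref{y1} (with $a_0=\id_{Y_0}$, $a_{n+1}=h_{n+1}$): condition (3) there, $(h_{n+1})^\ast\delta=(a_0)_\ast\delta'=0$, is equivalent to condition (2), the existence of $h_{n+1}'\colon X_{n+1}\to Y_n$ with $\beta_n\circ h_{n+1}'=h_{n+1}$ — precisely a factorization of $h_{n+1}$ through $\beta_n$, contradicting the hypothesis. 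Once this equivalence from Lemma \ref{y1} is invoked correctly, the argument closes cleanly; the remaining parts are routine diagram chases of the kind already carried out in Lemma \ref{le40}.
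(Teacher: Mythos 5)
Your proposal is correct and follows essentially the same route as the paper: for a non-split-epimorphism $g\colon T\to X_{n+1}$, indecomposability gives $g\in\rad_{\C}(T,X_{n+1})$, the hypothesis yields a factorization of $h_{n+1}\circ g$ through $\beta_n$, and the Lemma \ref{le40}-style ladder chase with $\delta\ssh$ and $\delta'\ssh$ produces the factorization through $\alpha_n$. The only difference is in the preliminary step that $\alpha_n$ is not a split epimorphism: the paper gets this directly from the commutative square (a section $u$ of $\alpha_n$ would give $h_{n+1}=h_{n+1}\alpha_n u=\beta_n h_n u$), whereas you detour through Corollary \ref{y2} and Lemma \ref{y1}; your version is valid but the direct observation is shorter.
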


\begin{proof}
Since $h_{n+1}$ does not factors through $\beta_n$,  $\alpha_n$ is not a split epimorphism. Given an object $L$,
assume that $r\colon L\to X_{n+1}$ is not a split epimorphism.
Since $X_{n+1}$ is indecomposable, we conclude $r\in\rad_{\C}(L,X_{n+1})$. Thus there exists  $s\in\C(L, Y _n)$
such that $h_{n+1}\circ r = \beta_n\circ s$. By using an argument similar to that in the proof of Lemma \ref{le40},
there exists  $t\in\C(L,X_{n})$ such that $r=\alpha_n\circ t$. It follows that $\alpha_n$ is right almost split.
\end{proof}

We recall from \cite{Ch} that two morphisms $f\colon X\to Y$ and $f'\colon X'\to Y$ are called \emph{right equivalent}
if $f$ factors through $f'$ and $f'$ factors through $f$. Assume that $f$ and $f'$ are right equivalent.
Given an object $C$, we have that $f$ is right $C$-determined if and only if so is $f'$.

Under the Condition \ref{cd}, the following result is straightforward.
\begin{lemma}\label{lem47} Suppose that $f$ and $f'$ are right equivalent. Then $f$ is a deflation if and only if $f'$ is a deflation.
\end{lemma}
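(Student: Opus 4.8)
The proof is a direct application of Condition~\ref{cd}(1), once the definition of right equivalence is unwound, so the plan is short. First I would record the data: since $f\colon X\to Y$ and $f'\colon X'\to Y$ are right equivalent, by definition $f$ factors through $f'$ and $f'$ factors through $f$, i.e.\ there exist morphisms $u\colon X\to X'$ and $v\colon X'\to X$ in $\C$ with $f=f'\circ u$ and $f'=f\circ v$.

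Next, assuming $f$ is a deflation, I would observe that $f'\circ u=f$ is then a deflation, and $X\xrightarrow{u}X'\xrightarrow{f'}Y$ is a composable pair; applying Condition~\ref{cd}(1) with $u$ playing the role of ``$f$'' and $f'$ playing the role of ``$g$'' in that condition yields that $f'$ is a deflation. The converse is obtained by exactly the same argument with the roles of $f$ and $f'$ interchanged: if $f'$ is a deflation, then $f\circ v=f'$ is a deflation, so Condition~\ref{cd}(1) applied to the composable pair $X'\xrightarrow{v}X\xrightarrow{f}Y$ shows that $f$ is a deflation. This establishes the equivalence.

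There is no substantial obstacle; the only point requiring any attention is to invoke Condition~\ref{cd}(1) with the correct composable pair, matching $f'$ (resp.\ $f$) with ``$g$'' and $u$ (resp.\ $v$) with ``$f$'' in the statement of the condition, and to note that right equivalence is by its very definition a symmetric relation so that the two implications are genuinely interchangeable.
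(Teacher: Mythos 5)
Your argument is correct and is exactly the intended one: the paper omits the proof, remarking only that the lemma is straightforward under Condition~\ref{cd}, and your unwinding of right equivalence into $f=f'\circ u$, $f'=f\circ v$ followed by two applications of Condition~\ref{cd}(1) is that straightforward argument.
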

Given a morphism $f\colon X_{n}\to X_{n+1}$ , we call a right minimal morphism $f'\colon Z_{n}\to X_{n+1}$ the \emph{right minimal version} of $f$,
if $f$ and $f'$ are right equivalent. Assume that $f\colon X_{n}\to X_{n+1}$ is a deflation and there exists a distinguished $n$-exangle
$$Z_0\xrightarrow{}Z_1\xrightarrow{}Z_2\xrightarrow{}\cdots\xrightarrow{}Z_n\xrightarrow{f'}X_{n+1}\overset{}{\dashrightarrow}$$ then
we call $Z_0$ a \emph{intrinsic weak $n$-kernel} of $f$. Dually, we can define intrinsic weak $n$-cokernel of a inflation. Since $\C$ is Krull-Schmidt, each morphism $f\colon X\to Y$ has a right minimal version (see
\cite[Theorem~1]{Bi}).

\begin{lemma}\label{2st}
Let $f\colon X_{n}\to X_{n+1}$ be a right minimal deflation. Then there exists a distinguished $n$-exangle of the form
$$X_0\xrightarrow{f_{0}}X_1\xrightarrow{f_{1}}\cdots\xrightarrow{f_{n-2}}X_{n-1}\xrightarrow{f_{n-1}}X_n\xrightarrow{f}X_{n+1}\overset{\delta}{\dashrightarrow}$$ such that $f_{i}\in\rad_{\C}$ for every $0\leq i\leq n-1$.

\end{lemma}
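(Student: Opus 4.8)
The plan is to build the required distinguished $n$-exangle in two stages: first getting $f$ into degree $n$ with all maps in degrees $1,\dots,n-1$ radical, and then fixing degree $0$. Since $f$ is a deflation, one starts from a distinguished $n$-exangle $Y_{\bullet}\colon Y_0\to Y_1\to\cdots\to X_n\xrightarrow{f}X_{n+1}\dashrightarrow\gamma$ with $d^{Y}_n=f$. Applying Lemma~\ref{ml} to $\gamma\in\E(X_{n+1},Y_0)$ yields a representative $X^{m}_{\bullet}$ of the same $\E$-extension whose maps in degrees $1,\dots,n-1$ lie in $\rad_{\C}$ and which is a direct summand of $Y_{\bullet}$, say $Y_{\bullet}\cong X^{m}_{\bullet}\oplus C_{\bullet}$. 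Because $Y_0$ and $X_{n+1}$ are the end terms of both complexes and $\C$ is Krull--Schmidt, $C_{\bullet}$ has zero terms in degrees $0$ and $n+1$; reading the degree-$n$ part of the isomorphism, $f=d^{Y}_n$ corresponds under $X_n\cong X^{m}_n\oplus C_n$ (and the identity on $X_{n+1}$) to $(d^{X^{m}}_n,0)$, so the idempotent $e\in\End_{\C}(X_n)$ projecting onto $X^{m}_n$ satisfies $f\circ e=f$. As $f$ is right minimal, $e$ is invertible, hence $e=\Id$ and $C_n=0$. Composing $X^{m}_{\bullet}$ with the isomorphism $X^{m}_n\xrightarrow{\sim}X_n$ in degree $n$ and the induced one in degree $n+1$ — which keeps the maps in degrees $1,\dots,n-1$ inside $\rad_{\C}$, as $\rad_{\C}$ is an ideal — produces a distinguished $n$-exangle $X_0\xrightarrow{f_0}\cdots\xrightarrow{f_{n-1}}X_n\xrightarrow{f}X_{n+1}\dashrightarrow\delta$ with last map exactly $f$ and $f_1,\dots,f_{n-1}\in\rad_{\C}$.

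The remaining task is to force $f_0\in\rad_{\C}$. When $n=1$ this is immediate: from $f\circ f_0=0$ and the right minimality of $f$ one argues as above — if $f_0\notin\rad_{\C}$, then by the standard description of the Jacobson radical of a Krull--Schmidt category there are decompositions $X_0\cong\widetilde X_0\oplus W$ and $X_1\cong\widetilde X_1\oplus W$ with $W\neq0$ on which $f_0$ restricts to $\Id_W$; then $f$ vanishes on $W\subseteq X_1=X_n$, so the projection of $X_n$ off $W$ is an idempotent $e$ with $f\circ e=f$, forcing $W=0$, a contradiction. For $n\geq2$ the idea is a bounded induction. If $f_0\notin\rad_{\C}$, take such decompositions $X_0\cong\widetilde X_0\oplus W$, $X_1\cong\widetilde X_1\oplus W$ with $f_0$ restricting to $\Id_W$; since $f_1\in\rad_{\C}$ and $f_1\circ f_0=0$, a direct check shows $f_1$ vanishes on the summand $W$ of $X_1$, whence $X_{\bullet}$ splits as the direct sum of the contractible complex $0\to\cdots\to W\xrightarrow{\Id_W}W\to\cdots\to0$ concentrated in degrees $0,1$ and a distinguished $n$-exangle $\widetilde X_{\bullet}$ with the same last map $f$, the same terms and maps in degrees $\geq2$, still with radical maps in degrees $1,\dots,n-1$, but with strictly fewer indecomposable summands in its $0$-th term. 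Iterating this reduction finitely many times (possible since $\C$ is Krull--Schmidt) produces a distinguished $n$-exangle of the required form.

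The step I expect to be the main obstacle is the one non-formal ingredient used in the last reduction: that a direct summand of a distinguished $n$-exangle whose complement is contractible (realizes the split extension) is again a distinguished $n$-exangle. This is a standard additivity property of $n$-exangulated categories — it is already implicit in the ``direct summand'' assertion of Lemma~\ref{ml} — and I would either quote it from \cite{HLN,HZ} or verify it directly, using that the inclusion and projection between $\widetilde X_{\bullet}$ and $X_{\bullet}$ are the identity on the $0$-th and $(n+1)$-st terms and are mutually homotopy inverse. Everything else is routine bookkeeping with idempotents in a Krull--Schmidt category, centred on the repeated observation that $f\circ e=f$ with $e$ idempotent and $f$ right minimal forces $e=\Id$.
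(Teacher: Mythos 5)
Your proposal is correct, but it is organized differently from the paper's proof. The paper chooses a distinguished $n$-exangle ending in $f$ for which the number of indecomposable direct summands of $X_0\oplus\cdots\oplus X_{n-1}$ is minimal, removes any non-radical $f_i$ with $0\le i\le n-2$ by cancelling an identity summand in degrees $i,i+1$ (quoting \cite[Proposition 3.3]{HLN} for the fact that the complement is again a distinguished $n$-exangle), and then obtains $f_{n-1}\in\rad_{\C}$ from the right minimality of $f$ via \cite[Lemma 1.1]{JK}. You instead invoke Lemma~\ref{ml} to make the maps in degrees $1,\dots,n-1$ radical in one stroke, and use the idempotent trick ($f\circ e=f$ with $f$ right minimal and $e$ idempotent forces $e=\Id$, so $C_n=0$ by Krull--Schmidt cancellation) to see that passing to the minimal representative does not shrink the degree-$n$ term, so the last map can be kept equal to $f$; this replaces the appeal to \cite[Lemma 1.1]{JK} by a direct argument. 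You then handle degree $0$ by iterated cancellation of identity summands between degrees $0$ and $1$. Your route makes the treatment of $f_{n-1}$ self-contained, while the paper's single global minimality choice avoids your induction at degree $0$; both proofs ultimately need the same nontrivial input there, namely that splitting off a contractible summand concentrated in degrees $0,1$ (which changes the end term $X_0$) again yields a distinguished $n$-exangle.

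One caution on that last point: this cancellation is not ``implicit in Lemma~\ref{ml}'', since that lemma compares representatives of one and the same $\E$-extension and therefore keeps both end terms fixed, whereas at degree $0$ the end term and the extension change, to $(p_{\widetilde X_0})_{\ast}\delta$; note its $W$-component vanishes precisely because $(f_0)_{\ast}\delta=0$ and $f_0$ restricts to $\Id_W$. The correct reference is \cite[Proposition 3.3]{HLN}, exactly the one the paper cites for its own cancellation step, so the ingredient is available --- just quote it rather than Lemma~\ref{ml}.
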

\begin{proof} Let $X_0\xrightarrow{f_{0}}X_1\xrightarrow{f_{1}}\cdots\xrightarrow{f_{n-2}}X_{n-1}\xrightarrow{f_{n-1}}X_n\xrightarrow{f}X_{n+1}\overset{\delta}{\dashrightarrow}$ be a distinguished $n$-exangle such that the number of indecomposable direct summand of $\bigoplus\limits_{i=0}^{n-1}X_i$ is minimal. If $f_i\notin \rad_{\C}$ for some $0\leq i\leq n-2$, then we may write $X_i=U\oplus \tilde{X_i}$ and $X_{i+1}=U\oplus \tilde{X}_{i+1}$ such that $f_{i}=\left[
              \begin{smallmatrix}
               1&0\\
                0&\tilde{f_{i}}
              \end{smallmatrix}
            \right]$ for some $\tilde{f_{i}}:\tilde{X_i}\rightarrow\tilde{X}_{i+1}$, where $U$ is indecomposable. Then
            $$X_0\xrightarrow{}X_1\xrightarrow{}\cdots\xrightarrow{}\tilde{X_i}\xrightarrow{\tilde{f_{i}}}\tilde{X}_{i+1}\xrightarrow{}X_{i+2}\xrightarrow{}\cdots \xrightarrow{} X_{n-1}\xrightarrow{}X_n\xrightarrow{f}X_{n+1}\overset{\eta}{\dashrightarrow}$$ is a distinguished $n$-exangle by Proposition 3.3 in \cite{HLN}. This is a contradiction with the minimal of indecomposable direct summand in its terms. Moreover, since $f$ is right minimal, we have $f_{n-1}\in\rad_{\C}$ by Lemma 1.1 in \cite{JK}.

\end{proof}

\begin{lemma}\label{lst}
Let $a_n\colon X_{n}\to X_{n+1}$ be a deflation and $C$ an indecomposable object in $\C$. If there exists a morphism $h_{n+1}\colon C\to X_{n+1}$
which almost factors through $a_n$, then there exists an {Auslander-Reiten $n$-exangle}
$$Z_{\bullet}:Z_0\xrightarrow{\gamma_{0}}Z_1\xrightarrow{\gamma_{1}}Z_2\xrightarrow{}\cdots\xrightarrow{\gamma_{n-1}}Z_n\xrightarrow{\gamma_{n}}C\overset{\rho}{\dashrightarrow}$$ such that $Z_0$ is a direct summand of an intrinsic weak $n$-kernel of $a_n$.
\end{lemma}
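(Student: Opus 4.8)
The plan is to reduce $a_n$, and then the resulting deflation onto $C$, to right minimal deflations, to recognise the minimal $n$-exangle so produced as an Auslander--Reiten $n$-exangle via Lemma~\ref{r1}, and finally to compare it with the intrinsic weak $n$-kernel of $a_n$ by a Yoneda-type argument. First note that $C$ is non-projective: were it projective, $h_{n+1}$ would factor through the deflation $a_n$ by Definition~\ref{def2}, contradicting that $h_{n+1}$ almost factors through $a_n$. Let $a'\colon Y_n\to X_{n+1}$ be the right minimal version of $a_n$, which exists since $\C$ is Krull--Schmidt; it is again a deflation by Lemma~\ref{lem47}, so by Lemma~\ref{2st} there is a distinguished $n$-exangle $Y_{\bullet}\colon Y_0\to Y_1\to\cdots\to Y_n\xrightarrow{a'}X_{n+1}\overset{\delta}{\dashrightarrow}$ with all maps of degree at most $n-1$ in $\rad_{\C}$, so $Y_0$ is an intrinsic weak $n$-kernel of $a_n$. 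Since $a_n$ and $a'$ are right equivalent, $h_{n+1}$ still almost factors through $a'$. Pulling $\delta$ back along $h_{n+1}$ yields a distinguished $n$-exangle $Y_0\to W_1\to\cdots\to W_n\xrightarrow{b_n}C\overset{h_{n+1}^{\ast}\delta}{\dashrightarrow}$ together with a morphism of distinguished $n$-exangles onto $Y_{\bullet}$ that is $\id_{Y_0}$ in degree $0$ and $h_{n+1}$ in degree $n+1$; as $C$ is indecomposable, Lemma~\ref{ras} shows $b_n$ is right almost split.

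Now let $\gamma_n\colon Z_n\to C$ be the right minimal version of $b_n$; it is right almost split, being right equivalent to $b_n$, and a deflation by Lemma~\ref{lem47}, so Lemma~\ref{2st} produces a distinguished $n$-exangle
\[Z_{\bullet}\colon Z_0\xrightarrow{\gamma_0}Z_1\xrightarrow{\gamma_1}\cdots\xrightarrow{\gamma_{n-1}}Z_n\xrightarrow{\gamma_n}C\overset{\rho}{\dashrightarrow}\]
with $\gamma_i\in\rad_{\C}$ for $0\le i\le n-1$. The central point is that $\gamma_0$ is left almost split. Let $s\colon Z_0\to V$ not be a split monomorphism; by Lemma~\ref{y1} it is enough to show $s_{\ast}\rho=0$, so suppose $s_{\ast}\rho\ne 0$ and realize it by a distinguished $n$-exangle $V_{\bullet}$ with last map $\epsilon_n\colon V_n\to C$. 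Let $f_{\bullet}\colon Z_{\bullet}\to V_{\bullet}$ be the morphism of distinguished $n$-exangles lifting $(s,\id_C)\colon\rho\to s_{\ast}\rho$, so $f_0=s$ and $f_{n+1}=\id_C$. Since $V_{\bullet}$ is non-split, $\epsilon_n$ is a deflation that is not a split epimorphism, hence $\epsilon_n=\gamma_n t$ for some $t$; from $\epsilon_n f_n=\gamma_n$ we get $\gamma_n(tf_n)=\gamma_n$, so right minimality of $\gamma_n$ forces $tf_n$ to be invertible. Lifting $\gamma_n t=\epsilon_n$ to a morphism $g_{\bullet}\colon V_{\bullet}\to Z_{\bullet}$ with $g_{n+1}=\id_C$ by the dual of Lemma~\ref{a2}, the composite $g_{\bullet}f_{\bullet}$ is an endomorphism of the minimal $n$-exangle $Z_{\bullet}$ that is an isomorphism on the last two terms (namely $tf_n$ and $\id_C$), hence an isomorphism; thus its degree-$0$ component $g_0 s$ is invertible and $s$ is a split monomorphism, a contradiction. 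Therefore $\gamma_0$ is left almost split, and since $C$ is indecomposable $\End_{\C}(C)$ is local, so Lemma~\ref{r1} shows $Z_{\bullet}$ is an Auslander--Reiten $n$-exangle ending at $C$; in particular $\End_{\C}(Z_0)$ is local, $Z_0$ is indecomposable, and $\rho\ne 0$ because $\gamma_n$ is not a split epimorphism.

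It remains to check that $Z_0$ is a direct summand of $Y_0$. Since $\gamma_n$ is the right minimal version of $b_n$, there are $\iota\colon Z_n\to W_n$ and $q\colon W_n\to Z_n$ with $q\iota=\id_{Z_n}$, $b_n\iota=\gamma_n$ and $\gamma_n q=b_n$. Lifting the commutative squares formed by $\iota$ and by $q$ at the last two terms (dual of Lemma~\ref{a2}) gives morphisms of distinguished $n$-exangles $Z_{\bullet}\to W_{\bullet}$ and $W_{\bullet}\to Z_{\bullet}$ that are $\id_C$ in degree $n+1$; their degree-$0$ components $z_0\colon Z_0\to Y_0$ and $w_0\colon Y_0\to Z_0$ satisfy $(z_0)_{\ast}\rho=h_{n+1}^{\ast}\delta$ and $(w_0)_{\ast}(h_{n+1}^{\ast}\delta)=\rho$, hence $(w_0z_0)_{\ast}\rho=\rho$. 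As $\End_{\C}(Z_0)$ is local and $\rho\ne 0$, the endomorphism $w_0z_0$ must be invertible: otherwise $\id_{Z_0}-w_0z_0$ would be invertible and $(\id_{Z_0}-w_0z_0)_{\ast}\rho=0$ would force $\rho=0$. Thus $z_0$ is a split monomorphism and $Z_0$ is a direct summand of the intrinsic weak $n$-kernel $Y_0$ of $a_n$, which completes the proof. The main obstacle is the second paragraph, and within it the diagram lemma used to pass from ``$tf_n$ invertible'' to ``$s$ a split monomorphism'': that an endomorphism of a minimal distinguished $n$-exangle which is an isomorphism on the last two terms is itself an isomorphism. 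Making this precise, via the exact sequences of Lemma~\ref{a1} together with minimality, is the delicate part; the remaining steps are routine manipulations with the $n$-exangulated axioms and Condition~\ref{cd}.
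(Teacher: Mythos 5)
Your argument is correct and follows the paper's overall strategy (pull back along $h_{n+1}$, apply Lemma~\ref{ras} to get a right almost split deflation onto $C$, pass to its right minimal version and a minimal $n$-exangle via Lemma~\ref{2st}, prove $\gamma_0$ left almost split by comparing with the pushout and using right minimality, conclude via Lemma~\ref{r1}), but it deviates in two genuinely different places. First, you minimalize $a_n$ itself before pulling back, so the comparison object $Y_0$ is literally an intrinsic weak $n$-kernel of $a_n$; the paper instead works with an arbitrary distinguished $n$-exangle ending in $a_n$ and shows $Z_0$ is a summand of its first term, so your bookkeeping is actually closer to the statement as worded. Second, for the direct-summand claim you first establish that $Z_{\bullet}$ is an Auslander--Reiten $n$-exangle, extract that $\End_{\C}(Z_0)$ is local, and then use the identity $(w_0z_0)_{\ast}\rho=\rho$ together with $\rho\neq 0$ to force $w_0z_0$ invertible; the paper instead runs the minimality machinery a second time (right minimality of $\gamma_n$, the Fedele-type propagation, and a five-lemma argument on the sequences of Lemma~\ref{a1}) to show the degree-zero comparison map is an isomorphism. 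Your local-ring trick is shorter and avoids that repetition. The one step you leave as a sketch --- that an endomorphism of the minimal $n$-exangle $Z_{\bullet}$ which is invertible in degrees $n$ and $n+1$ has invertible degree-zero component --- is exactly where the paper invokes the argument of Fedele's Lemma 3.12 (using $\gamma_1,\dots,\gamma_{n-1}\in\rad_{\C}$ to propagate invertibility through the middle degrees) followed by the five lemma applied to the exact sequences of Lemma~\ref{a1}; so the tools you name do suffice, and this is a deferred detail rather than a gap. Two cosmetic remarks: the observation that $C$ is non-projective is never used, and for completeness one should note (as the paper also leaves implicit) that $\gamma_0$ is not a split monomorphism, which follows from Corollary~\ref{y2} since $\gamma_n$ is not a split epimorphism.
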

\begin{proof}
Since $a_n\colon X_{n}\to X_{n+1}$ is a deflation, we have a distinguished $n$-exangle
$$X_{\bullet}:X_0\xrightarrow{\alpha_{0}}X_1\xrightarrow{\alpha_{1}}X_2\xrightarrow{\alpha_{2}}\cdots\xrightarrow{\alpha_{n-1}}X_n\xrightarrow{\alpha_n}X_{n+1}\overset{\sigma}{\dashrightarrow}.$$
Consider a morphism of distinguished $n$-exangles
$$\xymatrix{
 X_0\ar[r]^{\beta_0}\ar@{}[dr] \ar@{=}[d]^{} &Y_1 \ar[r]^{\beta_1} \ar@{}[dr]\ar[d]^{h_1}&\cdot\cdot\cdot \ar[r]^{\beta_{n-2}} \ar@{}[dr]&Y_{n-1} \ar[r]^{\beta_{n-1}}\ar@{}[dr]\ar[d]^{h_{n-1}} &Y_n \ar[r]^{\beta_n} \ar@{}[dr]\ar[d]^{h_n}&C \ar@{}[dr]\ar[d]^{h_{n+1}} \ar@{-->}[r]^-{\delta} &\\
{X_0}\ar[r]^{\alpha_0} &{X_1}\ar[r]^{\alpha_1}&\cdot\cdot\cdot\ar[r]^{\alpha_{n-2}} &{X_{n-1}}  \ar[r]^{\alpha_{n-1}} &{X _n}\ar[r]^{\alpha_n}  &{X_{n+1}} \ar@{-->}[r]^-{\sigma} &.}
$$
Then $\beta_n$ is a deflation. By Lemma~\ref{ras}, $\beta_n$ is right almost split.
Let $\gamma_n\colon Z_n\to C$ be the right minimal version of $\beta_n$. Then $\gamma_n$ is a right almost split deflation by Lemma~\ref{lem47}. Hence there exists
a distinguished $n$-exangle $$Z_{\bullet}:Z_0\xrightarrow{\gamma_{0}}Z_1\xrightarrow{\gamma_{1}}Z_2\xrightarrow{\gamma_{2}}\cdots\xrightarrow{\gamma_{n-1}}Z_n\xrightarrow{\gamma_n}C\overset{\rho}{\dashrightarrow},$$ where we may assume $\gamma_{i}\in\rad_{\C}$ for every $0\leq{i}\leq n-1$ by Lemma \ref{2st}. Since $\beta_n$ and $\gamma_n$ are right equivalent, we have the following commutative diagram of distinguished $n$-exangles by the dual of Lemma \ref {a2}
$$\xymatrix{Z_0\ar[r]^{\gamma_0}\ar@{}[dr] \ar@{-->}[d]^{f_0} &Z_1 \ar[r]^{\gamma_1} \ar@{}[dr]\ar@{-->}[d]^{f_1}&\cdot\cdot\cdot \ar[r]^{\gamma_{n-2}} \ar@{}[dr]&Z_{n-1} \ar[r]^{\gamma_{n-1}}\ar@{}[dr]\ar@{-->}[d]^{f_{n-1}} &Z_n \ar[r]^{\gamma_n} \ar@{}[dr]\ar[d]^{f_n}&C \ar@{}[dr]\ar@{=}[d]^{} \ar@{-->}[r]^-{\rho} &\\
 X_0\ar[r]^{\beta_0}\ar@{}[dr] \ar@{-->}[d]^{g_0} &Y_1 \ar[r]^{\beta_1} \ar@{}[dr]\ar@{-->}[d]^{g_1}&\cdot\cdot\cdot \ar[r]^{\beta_{n-2}} \ar@{}[dr]&Y_{n-1} \ar[r]^{\beta_{n-1}}\ar@{}[dr]\ar@{-->}[d]^{g_{n-1}} &Y_n \ar[r]^{\beta_n} \ar@{}[dr]\ar[d]^{g_n}&C \ar@{}[dr]\ar@{=}[d]^{} \ar@{-->}[r]^-{\delta} &\\
{Z_0}\ar[r]^{\gamma_0} &{Z_1}\ar[r]^{\gamma_1}&\cdot\cdot\cdot\ar[r]^{\gamma_{n-2}} &{Z_{n-1}}  \ar[r]^{\gamma_{n-1}} &{Z _n}\ar[r]^{\gamma_n}  &{C} \ar@{-->}[r]^-{\rho} &.}
$$
Since $\gamma_n$ is right minimal, $g_n\circ f_n$ is an isomorphism. In a similar way of the proof in \cite[Lemma 3.12]{F}, we know that $g_1\circ f_1,\cdot\cdot\cdot,g_{n-1}\circ f_{n-1}$ are all isomorphism. We claim that $g_0\circ f_0$ is also an isomorphism. In fact, we have the following commutative diagram with exact rows by Lemma \ref {a1}
$$\xymatrix@C=1.2cm{
\C(Z_2,-)\ar[r]^{\mathcal{C}(\gamma_1, -)}\ar[d]^{{\C}(g_2f_2,-)}_\cong &\C(Z_1,-)\ar[r]^{\C(\gamma_0, -)}\ar[d]^{{\C}(g_1f_1,-)}_\cong & \C(Z_0,-) \ar[r]^{~\del\ush~} \ar[d]^{\C(g_0f_0,-)}& \mathbb{E}(C,-)\ar[r]^{\mathbb{E}(\gamma_n,  -)}\ar@{=}[d]& \mathbb{E}(Z_n,-)\ar[d]^{\mathbb{E}(g_nf_n,-)}_\cong\\
\C(Z_2,-)\ar[r]^{\C(\gamma_1, -)} &\C(Z_1,-)\ar[r]^{\C(\gamma_0, -)}\ar[r] &  \C(Z_0,-) \ar[r]^{~\del\ush~}& \mathbb{E}(C,-)\ar[r]^{\mathbb{E}(\gamma_n,-)}& \mathbb{E}(Z_n,-)
.}$$
By the Five lemma, we have that $\C(g_0f_0,-)$ is an isomorphism, then $g_0f_0$ is an isomorphism by the Yoneda's lemma. Hence $Z_0$ is a direct summand of $X_0$.

Next, we claim that $Z_{\bullet}:Z_0\xrightarrow{\gamma_{0}}Z_1\xrightarrow{\gamma_{1}}Z_2\xrightarrow{\gamma_{2}}\cdots\xrightarrow{\gamma_{n-1}}Z_n\xrightarrow{\gamma_n}C\overset{\rho}{\dashrightarrow}$ is an {Auslander-Reiten $n$-exangle}. By Definition \ref{222}, we only need to show that $\gamma_{0}$ is left almost split. Let $k_0\colon Z_0\to W_0$ be a morphism in $\C$ that is not a split monomorphism. We have the following commutative diagram of distinguished $n$-exangles by \rm (EA2$\op$)
$$\xymatrix{
 Z_0\ar[r]^{\gamma_0}\ar@{}[dr] \ar[d]^{k_0} &Z_1 \ar[r]^{\gamma_1} \ar@{}[dr]\ar[d]^{k_1}&\cdot\cdot\cdot \ar[r]^{\gamma_{n-2}} \ar@{}[dr]&Z_{n-1} \ar[r]^{\gamma_{n-1}}\ar@{}[dr]\ar[d]^{k_{n-1}} &Z_n \ar[r]^{\gamma_n} \ar@{}[dr]\ar[d]^{k_n}&C \ar@{}[dr]\ar@{=}[d]^{}  \ar@{-->}[r]^-{\rho} &\\
{W_0}\ar[r]^{\xi_0} &{W_1}\ar[r]^{\xi_1}&\cdot\cdot\cdot\ar[r]^{\xi_{n-2}} &{W_{n-1}}  \ar[r]^{\xi_{n-1}} &{W _n}\ar[r]^{\xi_n}  &{C} \ar@{-->}[r]^-{{k_0}_\ast\rho} &.}
$$
Suppose that $k_0$ does not factor through $\gamma_0$. Then $\xi_n$ is not a split epimorphism by Lemma \ref {y1}. Since  $\gamma_n$ is a right almost split morphism, and hence there exists a morphism $s_n\colon W_n\to Z_n$ such that $\gamma_n\circ s_n=\xi_n$. We then obtain the following commutative diagram of distinguished $n$-exangles by the dual of Lemma \ref{a2}
$$\xymatrix{Z_0\ar[r]^{\gamma_0}\ar@{}[dr] \ar[d]^{k_0} &Z_1 \ar[r]^{\gamma_1} \ar@{}[dr]\ar[d]^{k_1}&\cdot\cdot\cdot \ar[r]^{\gamma_{n-2}} \ar@{}[dr]&Z_{n-1} \ar[r]^{\gamma_{n-1}}\ar@{}[dr]\ar[d]^{k_{n-1}} &Z_n \ar[r]^{\gamma_n} \ar@{}[dr]\ar[d]^{k_n}&C \ar@{}[dr]\ar@{=}[d]^{} \ar@{-->}[r]^-{\rho} &\\
 W_0\ar[r]^{\xi_0}\ar@{}[dr] \ar@{-->}[d]^{s_0} &W_1 \ar[r]^{\xi_1} \ar@{}[dr]\ar@{-->}[d]^{s_1}&\cdot\cdot\cdot \ar[r]^{\xi_{n-2}} \ar@{}[dr]&W_{n-1} \ar[r]^{\xi_{n-1}}\ar@{}[dr]\ar@{-->}[d]^{s_{n-1}} &W_n \ar[r]^{\xi_n} \ar@{}[dr]\ar[d]^{s_n}&C \ar@{}[dr]\ar@{=}[d]^{} \ar@{-->}[r]^-{{k_0}_\ast\rho} &\\
{Z_0}\ar[r]^{\gamma_0} &{Z_1}\ar[r]^{\gamma_1}&\cdot\cdot\cdot\ar[r]^{\gamma_{n-2}} &{Z_{n-1}}  \ar[r]^{\gamma_{n-1}} &{Z _n}\ar[r]^{\gamma_n}  &{C} \ar@{-->}[r]^-{\rho} &.}
$$
Since $\gamma_n$ is right minimal, the morphism $s_n\circ k_n$ is an isomorphism. In a similar way of the proof in \cite[Lemma 3.12]{F}, we know that $s_1\circ k_1,\cdot\cdot\cdot,s_{n-1}\circ k_{n-1}$ are all isomorphism. We claim that $s_0\circ k_0$ is also an isomorphism. In fact, we have the following commutative diagram with exact rows by Lemma \ref {a1}
$$\xymatrix@C=1.2cm{
\C(Z_2,-)\ar[r]^{\mathcal{C}(\gamma_1, -)}\ar[d]^{{\C}(s_2k_2,-)}_\cong &\C(Z_1,-)\ar[r]^{\C(\gamma_0, -)}\ar[d]^{{\C}(s_1k_1,-)}_\cong & \C(Z_0,-) \ar[r]^{~\del\ush~} \ar[d]^{\C(s_0k_0,-)}& \mathbb{E}(C,-)\ar[r]^{\mathbb{E}(\gamma_n,  -)}\ar@{=}[d]& \mathbb{E}(Z_n,-)\ar[d]^{\mathbb{E}(s_nk_n,-)}_\cong\\
\C(Z_2,-)\ar[r]^{\C(\gamma_1, -)} &\C(Z_1,-)\ar[r]^{\C(\gamma_0, -)}\ar[r] &  \C(Z_0,-) \ar[r]^{~\del\ush~}& \mathbb{E}(C,-)\ar[r]^{\mathbb{E}(\gamma_n,-)}& \mathbb{E}(Z_n,-)
.}$$
By the Five lemma, we have $\C(s_0k_0,-)$ is an isomorphism, then $s_0k_0$ is an isomorphism by the Yoneda's lemma. Thus $k_0$ is a split monomorphism, which is a contradiction with our assumption. Hence $k_0$ factor through $\gamma_0$. The claim is proved.

\end{proof}

\begin{corollary}\label{cort}
A minimal right determiner of a deflation has no non-zero projective direct summands and lies in $\C_r$.
\end{corollary}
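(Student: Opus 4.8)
The plan is to reduce to the case of an indecomposable determiner, to produce a morphism into $X_{n+1}$ that almost factors through the given deflation, and then to invoke Lemma~\ref{lst} together with Proposition~\ref{desc}; the assertion about projective summands is obtained along the way from a direct ``removal'' argument.

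First I would settle the projective part. Let $C$ be a minimal right determiner of a deflation $\alpha\colon X_n\to X_{n+1}$, and suppose $C\cong C^{\flat}\oplus P$ with $P$ projective. I claim $C^{\flat}$ is still a right determiner of $\alpha$: if $g\colon L\to X_{n+1}$ satisfies that $g\circ h$ factors through $\alpha$ for every $h\colon C^{\flat}\to L$, then for any $h'\colon C\to L$ the component of $g\circ h'$ coming from $C^{\flat}$ factors through $\alpha$ by hypothesis, while the component coming from $P$ is a morphism $P\to X_{n+1}$ and hence factors through the deflation $\alpha$ by projectivity of $P$; so $g\circ h'$ factors through $\alpha$, and since $C$ is a right determiner, $g$ factors through $\alpha$. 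By minimality $C$ is a direct summand of $C^{\flat}$, forcing $P=0$. Thus $C$ has no nonzero projective summand. Now write $C=\bigoplus_{i=1}^{m}C_i$ with the $C_i$ indecomposable and non-projective. Since $\C_r$ is additive and closed under direct summands, it is enough to show each $C_i\in\C_r$. We may assume $\alpha$ is non-split (otherwise $C=0$), and, replacing it by its right minimal version (again a deflation by Lemma~\ref{lem47}, with the same right determiners since right equivalent morphisms do), that $\alpha$ is right minimal; by Lemma~\ref{2st} it then fits into a distinguished $n$-exangle $X_0\to X_1\to\cdots\to X_n\xrightarrow{\alpha}X_{n+1}\overset{\delta}{\dashrightarrow}$ whose differentials other than $\alpha$ lie in $\rad_{\C}$.

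The heart of the argument is to produce, for each $C_i$, a morphism $g\colon C_i\to X_{n+1}$ that almost factors through $\alpha$. Consider the finitely presented functor $F:=\mathrm{coker}\bigl(\C(-,X_n)\xrightarrow{\C(-,\alpha)}\C(-,X_{n+1})\bigr)\colon\C\op\to k\text{-mod}$. By Lemma~\ref{a1} (used as in Definition~\ref{def1}), $F$ is isomorphic to the image of $\delta\ssh\colon\C(-,X_{n+1})\to\E(-,X_0)$, and a morphism $g\colon L\to X_{n+1}$ factors through $\alpha$ if and only if its class $[g]$ vanishes in $F(L)$. In these terms, ``$\alpha$ is right $C$-determined'' says precisely that every nonzero subfunctor of $F$ has nonzero value at $C$. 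Using that $C$ is the \emph{minimal} right determiner, and that $F$ is finitely presented over the $\Ext$-finite Krull--Schmidt $k$-category $\C$ with $k$ artinian --- which one uses to force every nonzero subfunctor of $F$ to contain a simple subfunctor $S_D=\C(-,D)/\rad_{\C}(-,D)$, each such $D$ being a summand of every right determiner --- one shows that for each $i$ the simple functor $S_{C_i}$ embeds into $F$. Choosing $0\neq x\in S_{C_i}(C_i)\hookrightarrow F(C_i)$ and a representative $g\colon C_i\to X_{n+1}$ of $x$, the morphism $g$ does not factor through $\alpha$ (as $x\neq 0$), while $g\circ r$ factors through $\alpha$ for every $r\in\rad_{\C}(T,C_i)$ (as $\rad_{\C}$ acts by zero on $S_{C_i}$, so the image of $x$ in $F(T)$ vanishes). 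Hence $g$ almost factors through $\alpha$.

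Finally, applying Lemma~\ref{lst} to the deflation $\alpha$, the indecomposable object $C_i$, and the morphism $g$, one obtains an Auslander--Reiten $n$-exangle ending at $C_i$; by Proposition~\ref{desc}(1) this gives $C_i\in\C_r$, and therefore $C\in\C_r$. The main obstacle is the functor-theoretic step: identifying the minimal right determiner of $\alpha$ with the direct sum of the indecomposables that realize simple subfunctors of $F$, which requires controlling the socle and length behaviour of the finitely presented functor $F$ via the $\Ext$-finiteness over the artinian ring $k$ and the Krull--Schmidt hypothesis, in the spirit of Jiao--Le. If Theorem~\ref{thm} is already available, one may instead invoke the implication $(5)\Rightarrow(1)$ there for each indecomposable summand, but the same reduction --- showing each $C_i$ is a right determiner of a non-split deflation --- is still needed.
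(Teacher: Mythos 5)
Your overall strategy is the standard one: the paper itself does not write out a proof (it just points to Corollary 5.3 of Jiao--Le), and what you propose -- kill projective summands by showing they can be dropped from any determiner, then produce for each indecomposable summand $C_i$ of the minimal determiner a morphism $C_i\to X_{n+1}$ that almost factors through $\alpha$, and conclude by Lemma~\ref{lst} and Proposition~\ref{desc} -- is exactly that line of argument. Your treatment of the projective part is complete and correct (projectivity lets every map $P\to X_{n+1}$ factor through the deflation, so the complement of $P$ is still a determiner, and minimality plus Krull--Schmidt forces $P=0$), and the final step via Lemma~\ref{lst} and Proposition~\ref{desc}(1) is applied correctly, since by then each $C_i$ is known to be non-projective and indecomposable.

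The genuine gap is the step you yourself flag as ``the main obstacle'': you assert, but do not prove, that for each indecomposable summand $C_i$ of the minimal right determiner the simple functor $S_{C_i}$ embeds into $F=\mathrm{coker}\,\C(-,\alpha)$, equivalently that some morphism $C_i\to X_{n+1}$ almost factors through $\alpha$. This is the entire content of the corollary beyond the routine parts, and the auxiliary claim you lean on (``every nonzero subfunctor of $F$ contains a simple subfunctor'') is not automatic for a finitely presented functor on a category that is only Ext-finite, so it needs an actual argument. It does go through here, but only by using the determinedness hypothesis itself: since $\alpha$ is right $C$-determined, every nonzero subfunctor of $F$ has nonzero value at $C$; since $F$ embeds into $\E(-,X_0)$ (Lemma~\ref{a1}), $F(C)$ has finite length over the artinian ring $k$, so in any chain of nonzero subfunctors the values at $C$ stabilize at a nonzero submodule and the intersection is again nonzero; Zorn's lemma then produces a minimal, hence simple, subfunctor inside any given nonzero subfunctor, and such a simple subfunctor is of the form $S_D$ with $D$ indecomposable, where $S_D(C)\neq 0$ forces $D$ to be a direct summand of $C$. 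Finally, to get $S_{C_i}$ itself for \emph{every} indecomposable summand $C_i$ of the minimal determiner, you still need the converse-type argument you only gesture at: if no simple subfunctor of $F$ were isomorphic to $S_{C_i}$, then every nonzero subfunctor of $F$ would already have nonzero value on the complement $C''$ of $C_i$ in $C$, so $C''$ would be a right determiner of $\alpha$, contradicting minimality of $C$ by Krull--Schmidt. Without these two pieces (the existence of simple subfunctors in this setting and the complement/minimality argument), your proof is a correct plan rather than a proof.
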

\begin{proof}It is similar to the proof of Corollary 5.3 in \cite{JL}, we omit it.
\end{proof}
The following result is essentially contained in \cite[Theorem 5.4]{JL}. However, it can be
extended to our setting.
\begin{proposition}\label{pro0}
For any deflation $\alpha\in\C(X,Y)$, the following statements are equivalent.
\begin{enumerate}
\item[$(1)$] $\alpha$ is right $C$-determined for some object $C$.
\item[$(2)$] The intrinsic weak $n$-kernel of $\alpha$ lies in $\C_l$.
\end{enumerate}
\end{proposition}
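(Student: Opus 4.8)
The plan is to prove the two implications separately, the direction $(2)\Rightarrow(1)$ being a short consequence of the machinery already developed. For $(2)\Rightarrow(1)$, let $X_0$ denote the intrinsic weak $n$-kernel of $\alpha$, so by definition there is a distinguished $n$-exangle $X_0\xrightarrow{}X_1\xrightarrow{}\cdots\xrightarrow{}X_n\xrightarrow{\alpha'}X_{n+1}\dashrightarrow$ whose last deflation $\alpha'$ is the right minimal version of $\alpha$. Since $X_0\in\C_l$, Lemma~\ref{lem49} applied to this $n$-exangle shows that $\alpha'$ is right $\tau_n^{-}X_0$-determined. Because $\alpha$ and $\alpha'$ are right equivalent and the property of being right $C$-determined is invariant under right equivalence (recalled just before Lemma~\ref{lem47}), $\alpha$ is right $\tau_n^{-}X_0$-determined, so $(1)$ holds with $C=\tau_n^{-}X_0$.

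For $(1)\Rightarrow(2)$ I would first reduce to the case that $\alpha$ is right minimal, replacing $\alpha$ by its right minimal version; this changes neither the intrinsic weak $n$-kernel of $\alpha$ nor the existence of an object $C$ with $\alpha$ right $C$-determined, and $\alpha$ remains a deflation by Lemma~\ref{lem47}. Fix, using Lemma~\ref{2st}, a distinguished $n$-exangle $X_\bullet\colon X_0\xrightarrow{f_0}X_1\xrightarrow{}\cdots\xrightarrow{}X_n\xrightarrow{\alpha}X_{n+1}\overset{\eta}{\dashrightarrow}$ with all $f_i$ in $\rad_\C$; then $X_0$ is the intrinsic weak $n$-kernel of $\alpha$. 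By Corollary~\ref{cort} the deflation $\alpha$ has a minimal right determiner $C$, and $C\in\C_r$; hence $\tau_nC\in\C_l$ by Proposition~\ref{lem11}(1), and $\add(\tau_nC)\subseteq\C_l$ since $\C_l$ is additive and closed under direct summands. Put $H:=\Im\C(C,\alpha)$; this is a right $\End_\C(C)$-submodule of $\C(C,X_{n+1})$ containing $\P(C,X_{n+1})$, because every $n$-projectively trivial morphism $C\to X_{n+1}$ factors through the deflation $\alpha$ by Lemma~\ref{Le0}. Proposition~\ref{pro1} then provides a distinguished $n$-exangle $X'_\bullet\colon X'_0\xrightarrow{}\cdots\xrightarrow{}X'_n\xrightarrow{\alpha'}X_{n+1}\dashrightarrow$ with $\alpha'$ right $C$-determined, $X'_0\in\add(\tau_nC)$ and $\Im\C(C,\alpha')=H=\Im\C(C,\alpha)$.

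Next I would verify that $\alpha$ and $\alpha'$ are right equivalent: for each $h\in\C(C,X_n)$ the composite $\alpha\circ h$ lies in $\Im\C(C,\alpha)=\Im\C(C,\alpha')$, hence factors through $\alpha'$, and since $\alpha'$ is right $C$-determined, applying its defining property with $g=\alpha$ shows that $\alpha$ factors through $\alpha'$; the symmetric argument gives the reverse factorization. Hence $\alpha$, being right minimal and right equivalent to $\alpha'$, is the right minimal version of $\alpha'$, so that $X_0$ is an intrinsic weak $n$-kernel of $\alpha'$ as well. The decisive point --- and the step I expect to be the main obstacle --- is the structural claim that the intrinsic weak $n$-kernel of a deflation occurs as a direct summand of the first term of \emph{any} distinguished $n$-exangle realizing that deflation, so in particular $X_0$ is a direct summand of $X'_0$. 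Granting this, $X_0$ is a direct summand of $X'_0\in\add(\tau_nC)\subseteq\C_l$, whence $X_0\in\C_l$, which is $(2)$.

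To establish the structural claim I would invoke the standard decomposition of a morphism as its right minimal version plus a zero morphism, writing $\alpha'$ up to isomorphism as $(\alpha,0)\colon X_n\oplus N\to X_{n+1}$. If $\eta'\in\E(X_{n+1},X'_0)$ realizes $X'_\bullet$, then the attached-complex conditions defining $X'_\bullet$ and $X_\bullet$ give $\alpha^{\ast}\eta'=0$ and $\alpha^{\ast}\eta=0$, and hence $(\alpha')^{\ast}\eta=0$; the exact sequences of Lemma~\ref{a1} then yield morphisms $\psi\colon X_0\to X'_0$ and $\psi'\colon X'_0\to X_0$ with $\eta'=\psi_{\ast}\eta$ and $\eta=\psi'_{\ast}\eta'$. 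Consequently $(\mathrm{id}_{X_0}-\psi'\psi)_{\ast}\eta=0$, and feeding $\mathrm{id}_{X_0}-\psi'\psi$ once more into the exact sequence of Lemma~\ref{a1} evaluated at the object $X_0$, together with $d^{X}_0=f_0\in\rad_\C$, shows $\mathrm{id}_{X_0}-\psi'\psi\in\rad_\C(X_0,X_0)$; so $\psi'\psi$ is invertible and $\psi$ is a split monomorphism, that is, $X_0$ is a direct summand of $X'_0$. The same computation, specialised to two minimal distinguished $n$-exangles with the same right minimal deflation, also shows that the intrinsic weak $n$-kernel is well defined up to isomorphism. The remaining verifications --- the submodule properties of $H$, the additivity and summand-closure of $\C_l$, and the bookkeeping with right minimal versions --- are routine, so I expect this structural claim about intrinsic weak $n$-kernels to be the only genuinely delicate ingredient.
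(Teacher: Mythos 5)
Your proof is correct, and its overall skeleton for $(1)\Rightarrow(2)$ coincides with the paper's: reduce to the right minimal version, take a minimal right determiner $C$ (Corollary \ref{cort}, so $C\in\C_r$ and $\tau_nC\in\C_l$ by Proposition \ref{lem11}), apply Proposition \ref{pro1} to $H=\Im\,\C(C,\alpha)$, check that $\alpha$ and the resulting deflation $\alpha'$ are right equivalent, and conclude by showing the intrinsic weak $n$-kernel is a direct summand of the first term $X'_0\in\add(\tau_n C)$; the direction $(2)\Rightarrow(1)$ is verbatim the paper's argument via Lemma \ref{lem49} and invariance of right determination under right equivalence. Where you genuinely diverge is in the summand step: the paper lifts the right-equivalence to morphisms of distinguished $n$-exangles using the dual of Lemma \ref{a2}, uses right minimality of $\alpha'$ to make the degree-$n$ composite invertible, and then propagates invertibility down to degree $0$ as in Lemma \ref{lst} via a Five Lemma argument; you instead work directly with the $\E$-extension classes, producing $\psi$ and $\psi'$ with $\eta'=\psi_{\ast}\eta$ and $\eta=\psi'_{\ast}\eta'$ from the exact sequences of Lemma \ref{a1}, and then using $(\mathrm{id}_{X_0}-\psi'\psi)_{\ast}\eta=0$ together with $d_0^X\in\rad_{\C}$ (Lemma \ref{2st}) to place $\mathrm{id}_{X_0}-\psi'\psi$ in the radical, so $\psi'\psi$ is invertible. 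This is a legitimate and arguably more economical mechanism: it avoids constructing full morphisms of $n$-exangles and the Five Lemma, at the price of invoking the standard decomposition $\alpha'\cong(\alpha,0)$ (which in fact you only need in the weaker form that $\alpha$ and $\alpha'$ factor through each other, already available from right equivalence). Your implicit uses --- that $\P(C,X_{n+1})\subseteq\Im\,\C(C,\alpha)$ by Lemma \ref{Le0}, that $\add(\tau_nC)\subseteq\C_l$, and that minimal right determiners exist --- are exactly the same tacit assumptions the paper makes, so they do not constitute a gap relative to the published proof.
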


\begin{proof}$(1)\Rightarrow (2)$ We may assume that $C$ is a minimal right determiner of $\alpha$. Then we have $C\in\C_r$ by Corollary \ref{cort}. Notice that $\Im~\C(C,\alpha)$ is a right $\End_{\C}(C)$-submodule of $\C(C,Y)$. Since $\alpha$ is a deflation, we have $\P(C,Y)\subseteq\Im~\C(C,\alpha)$. By Proposition \ref{pro1}, there is a distinguished $n$-exangle
$$X_0\xrightarrow{\beta_0}X_1\xrightarrow{\beta_1}X_2\xrightarrow{\beta_2}\cdots\xrightarrow{\beta_{n-1}}X_n\xrightarrow{\beta}Y\overset{\rho}{\dashrightarrow}$$
such that $\beta$ is right $C$-determined, $X_0\in\rm{\add}~(\tau_n C)$ and $H=\rm{\Im}~\C(C,\alpha)=\C(C,\beta)$. For any $f\in\C(C,X)$ and $g\in\C(C,X_n)$, we have $\alpha\circ f$ and $\beta\circ g$ factor through $\beta$ and $\alpha$, respectively. Since $\alpha$ and $\beta$ are right $C$-determined, we have $\alpha$ and $\beta$ factor through each other. This shows that  $\alpha$ and $\beta$ are right equivalent.

Suppose $\alpha':X'_n\rightarrow Y$ is a right minimal version of $\alpha$. Then $\alpha'$ and $\beta$ are right equivalent. Let $$X'_0\xrightarrow{\alpha_0}X'_1\xrightarrow{\alpha_1}X'_2\xrightarrow{\alpha_2}\cdots\xrightarrow{\alpha_{n-1}}X'_n\xrightarrow{\alpha'}Y\overset{\delta}{\dashrightarrow}$$
be a distinguished $n$-exangle, where we may assume $\alpha_{i}\in\rad_{\C}$ for every $0\leq{i}\leq n-1$ by Lemma \ref{2st}. Thus $X'_0$ is a intrinsic weak $n$-kernel of $\alpha'$. Since  $\alpha'$ and $\beta$ are right equivalent, there exist $\varphi_{n}:X'_n\rightarrow X_n$ and $\psi_{n}:X_n\rightarrow X'_n$ such that $\alpha'=\beta\varphi_{n}$ and $\beta=\alpha'\psi_{n}$. We then obtain the following commutative diagram of distinguished $n$-exangles by the dual of Lemma \ref{a2}
$$\xymatrix{X'_0\ar[r]^{\alpha_0}\ar@{}[dr] \ar@{-->}[d]^{\varphi_0} &X'_1 \ar[r]^{\alpha_1} \ar@{}[dr]\ar@{-->}[d]^{\varphi_1}&\cdot\cdot\cdot \ar[r]^{\alpha_{n-2}} \ar@{}[dr]&X'_{n-1} \ar[r]^{\alpha_{n-1}}\ar@{}[dr]\ar@{-->}[d]^{\varphi_{n-1}} &X'_n \ar[r]^{\alpha'} \ar@{}[dr]\ar[d]^{\varphi_n}&Y\ar@{}[dr]\ar@{=}[d]^{} \ar@{-->}[r]^-{\delta} &\\
 X_0\ar[r]^{\beta_0}\ar@{}[dr] \ar@{-->}[d]^{\psi_0} & X_1 \ar[r]^{\beta_1} \ar@{}[dr]\ar@{-->}[d]^{\psi_1}&\cdot\cdot\cdot \ar[r]^{\beta_{n-2}} \ar@{}[dr]& X_{n-1} \ar[r]^{\beta_{n-1}}\ar@{}[dr]\ar@{-->}[d]^{\psi_{n-1}} & X_n \ar[r]^{\beta} \ar@{}[dr]\ar[d]^{\psi_n}&Y \ar@{}[dr]\ar@{=}[d]^{} \ar@{-->}[r]^-{\rho} &\\
{X'_0}\ar[r]^{\alpha_0} &{X'_1}\ar[r]^{\alpha_1}&\cdot\cdot\cdot\ar[r]^{\alpha_{n-2}} &{X'_{n-1}}  \ar[r]^{\alpha_{n-1}} &{X'_n}\ar[r]^{\alpha'}  &{Y} \ar@{-->}[r]^-{\delta} &.}
$$
Using similar arguments as in the proof of Lemma \ref{lst}, we have $\psi_0\varphi_0$ is an isomorphism. Hence $X'_0$ is a direct summand of $X_0$. Then result follows since $X_0\in{\rm{\add}}~(\tau_n C)$ and $\tau_n C\in\C_l$.

$(2)\Rightarrow (1)$  Suppose $\alpha'\colon X'_n\to Y$ is a right minimal version of $\alpha$. Then $\alpha'$ is a deflation. Let
$$X'_0\xrightarrow{}X'_1\xrightarrow{}X'_2\xrightarrow{}\cdots\xrightarrow{}X'_n\xrightarrow{\alpha'}Y\overset{\delta}{\dashrightarrow}$$ be a distinguished $n$-exangle. Then $X'_0\in\C_l$ by the assumption of (2). By Lemma~\ref{lem49}, we have that $\alpha'$ is right $\tau_n^-(X'_0)$-determined. This shows that $\alpha$ is right $\tau_n^-(X'_0)$-determined
since $\alpha$ and $\alpha'$ are right equivalent.
\end{proof}

\begin{proposition}\label{prope}
Suppose $K$ is an object without non-zero injective direct summands, the following statements are equivalent.
\begin{enumerate}
\item[$(1)$] $K\in\C_l$.
\item[$(2)$] There exists some deflation $\alpha\colon X\to Y$, which is right $C$-determined for some object $C$ such that $K$ is the intrinsic $n$-kernel of $\alpha$.
\end{enumerate}
\end{proposition}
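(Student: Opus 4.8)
The plan is to prove the two implications separately, reducing each to results already established. Throughout, I take \emph{the intrinsic $n$-kernel} of a deflation $\alpha$ to be the degree-$0$ term of a distinguished $n$-exangle realizing the right minimal version of $\alpha$ all of whose connecting morphisms lie in $\rad_{\C}$; such a realization exists by Lemma~\ref{2st}, and the Krull-Schmidt hypothesis (uniqueness of right minimal versions, together with the cancellation of contractible summands) makes this object well defined up to isomorphism and identifies it with the intrinsic weak $n$-kernel occurring in Proposition~\ref{pro0}.

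With this convention, $(2)\Rightarrow(1)$ is essentially immediate: if $\alpha$ is right $C$-determined for some object $C$ and $K$ is its intrinsic $n$-kernel, then Proposition~\ref{pro0} gives $K\in\C_l$ at once. (This direction does not use the hypothesis that $K$ has no non-zero injective summand; that hypothesis is needed only to make the converse available.)

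For $(1)\Rightarrow(2)$ the idea is to manufacture, out of $K\in\C_l$, a deflation that is visibly right determined and has $K$ as its intrinsic $n$-kernel. If $K=0$ one takes $\alpha=\id_0$, which is a deflation, is right $0$-determined because the defining implication is vacuous, and has intrinsic $n$-kernel $0=K$. If $K\neq0$, then, since $K$ has no non-zero injective summand, every indecomposable summand $K_i$ of $K$ is non-injective, so by the dual of Lemma~\ref{def2} there is a non-split extension $\delta_i\in\E(Y_i,K_i)$; choosing a minimal realization via Lemma~\ref{ml} gives a distinguished $n$-exangle $K_i\xrightarrow{\beta_0^i}Y_1^i\to\cdots\to Y_n^i\xrightarrow{\beta_n^i}Y_{n+1}^i\overset{\delta_i}{\dashrightarrow}$ with $\beta_1^i,\dots,\beta_{n-1}^i\in\rad_{\C}$, and since $K_i$ is indecomposable and $\delta_i\neq0$, Corollary~\ref{y2} shows $\beta_0^i$ is not a split monomorphism, hence also lies in $\rad_{\C}$. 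Forming the direct sum of these $n$-exangles over the summands $K_i$ of $K$ produces a distinguished $n$-exangle
$$X_\bullet\colon\quad K\longrightarrow X_1\longrightarrow\cdots\longrightarrow X_n\xrightarrow{\alpha}X_{n+1}\overset{\delta}{\dashrightarrow}$$
all of whose connecting morphisms lie in $\rad_{\C}$. Consequently $\alpha$ is a deflation; moreover $\alpha$ is right minimal, because a non-zero direct summand $W$ of $X_n$ on which $\alpha$ vanishes would, by the exactness condition in Definition~\ref{def1}, force the split monomorphism $W\hookrightarrow X_n$ to factor through the connecting morphism $X_{n-1}\to X_n$, so that the latter carries an isomorphism block, contradicting its lying in $\rad_{\C}$; and $K$ is the intrinsic $n$-kernel of $\alpha$. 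Finally, $X_\bullet$ has degree-$0$ term $K\in\C_l$, so Lemma~\ref{lem49} shows $\alpha$ is right $\tau_n^-K$-determined, and taking $C:=\tau_n^-K$ yields $(2)$.

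The step I expect to be the main obstacle is guaranteeing that $K$ \emph{itself}, rather than merely a proper direct summand of it, occurs as the intrinsic $n$-kernel of the deflation constructed in $(1)\Rightarrow(2)$; this is precisely the role of the hypothesis that $K$ has no non-zero injective summand. If $I$ were such a summand, then $\E(-,I)=0$ would make every extension of $K$ ``invisible'' on $I$, so the degree-$0$ term of any reduced realization would drop $I$ and $K$ could never be recovered. Pinning the direct-sum construction down to $K$ exactly, together with the accompanying bookkeeping with $\rad_{\C}$ and with contractible summands that identifies the two notions of intrinsic ($n$-)kernel and makes Proposition~\ref{pro0} applicable in both directions, is where the care is needed; everything else is formal.
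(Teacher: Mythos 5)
Your proof is correct, and while the direction $(2)\Rightarrow(1)$ coincides with the paper's (both are an immediate application of Proposition \ref{pro0}), your argument for $(1)\Rightarrow(2)$ takes a genuinely different route. The paper decomposes $K=\bigoplus K_i$ and invokes Proposition \ref{desc} (hence Lemma \ref{Le4}) to produce an Auslander--Reiten $n$-exangle starting at each non-injective indecomposable $K_i$; the terminal morphisms are right almost split, hence right minimal, and their direct sum is asserted to be right minimal, after which Lemma \ref{lem49} gives right $\tau_n^-K$-determinedness. You avoid the AR-existence machinery entirely: non-injectivity of $K_i$ (dual of Lemma \ref{def2}) gives a non-split extension, Lemma \ref{ml} gives a realization with middle maps in $\rad_{\C}$, Corollary \ref{y2} plus indecomposability of $K_i$ puts the inflation in $\rad_{\C}$ as well, and right minimality of the summed deflation is then proved directly from the exactness in Definition \ref{def1} together with $d_{n-1}\in\rad_{\C}$ (this covers $n=1$, where $d_{n-1}$ is the inflation, and $n\ge 2$ alike), before concluding with Lemma \ref{lem49} exactly as the paper does. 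What your route buys: it is more elementary at this step (no appeal to Proposition \ref{desc}), it uses $K\in\C_l$ only once, for the whole object $K$ in Lemma \ref{lem49}, rather than needing each summand $K_i\in\C_l$ (a closure-under-summands point the paper glosses over), and it proves right minimality of the total deflation outright instead of asserting that a direct sum of right minimal morphisms is right minimal. What the paper's route buys is brevity, since Proposition \ref{desc} is already on the table and right-almost-split morphisms with indecomposable target are right minimal for free. Your handling of the degenerate case $K=0$ and your convention pinning down the intrinsic (weak) $n$-kernel via radical realizations are consistent with the paper's usage of Lemma \ref{2st}, so no gap arises there.
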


\begin{proof}
$(1)\Rightarrow (2)$ Let $K\in\C_l$. we may assume that $K=\bigoplus\limits_{i=1}^m K_i$, where $K_i$ is indecomposable, $i=1,2,\cdots,m$. Moreover, we have that each $K_i$ is non-injective by assumption. Hence, by Proposition \ref{desc}, for each $i=1,2,\cdots,m$,
there exists an {Auslander-Reiten $n$-exangle} starting at $K_i$
$$K_i\xrightarrow{}X^i_1\xrightarrow{}X^i_2\xrightarrow{}\cdots\xrightarrow{}X^i_n\xrightarrow{\alpha^i_n}X^i_{n+1}\overset{\rho_i}{\dashrightarrow}.$$
We have that $\bigoplus\limits_{i=1}^m \alpha^i_n$ is a deflation, then $\bigoplus\limits_{i=1}^m \alpha^i_n$ is right $\tau_n^-K$-determined by Lemma \ref{lem49}.  Moreover, we observe that $\alpha_i$ is right minimal, so $\bigoplus\limits_{i=1}^m \alpha^i_n$ is also right minimal.
It follows that $K$ is an intrinsic weak $n$-kernel of $\bigoplus\limits_{i=1}^m \alpha^i_n$.

$(2)\Rightarrow (1)$ It follows from Proposition \ref{pro0}.
\end{proof}

The following lemma is the converse of Proposition \ref{pro1}.

\begin{proposition}\label{hj}
\rm Let $C\in\C$. If for each $X_{n+1}\in\C$ and each right $\End_{\C}(C)$-submodule $H$ of $\C(C,X_{n+1})$ satisfying $\P(C,X_{n+1})\subseteq H$,
there exists a right $C$-determined deflation $\alpha_{n}\colon X_{n}\to X_{n+1}$ such that $\rm\Im~\C(C,\alpha)=H$,
then $C\in\C_r$.
\end{proposition}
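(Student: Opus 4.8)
The plan is to feed one well-chosen pair $(X_{n+1},H)$ into the hypothesis and then read representability of $D\E(C,-)$ off the minimal right determiner of the resulting deflation. First I would dispose of the easy reductions: if $C$ is projective then $\E(C,-)=0$ by Lemma~\ref{def2}, so $D\E(C,-)=0$ is representable and $C\in\C_r$; hence assume $C$ is non-projective and write $C=\bigoplus_iC_i^{m_i}$ with the $C_i$ pairwise non-isomorphic indecomposables. Since $\E$ is an additive bifunctor and $D$ is additive we have $D\E(C,-)\cong\bigoplus_iD\E(C_i,-)^{m_i}$ as functors $\overline\C\to k\text{-mod}$, and $D\E(C_i,-)=0$ whenever $C_i$ is projective; since a finite direct sum of representable functors is representable, it suffices to prove $C_i\in\C_r$ for each non-projective $C_i$ occurring in $C$.

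Next I would apply the hypothesis to $X_{n+1}:=C$ and $H:=\rad_\C(C,C)+\P(C,C)$. This $H$ is a two-sided ideal of $\End_\C(C)$, hence a right $\End_\C(C)$-submodule of $\C(C,C)$ containing $\P(C,C)$, and it is proper: from $\id_C=r+p$ with $r\in\rad_\C(C,C)$ and $p\in\P(C,C)$ one would get $p=\id_C-r$ invertible, hence $\id_C\in\P(C,C)$, i.e.\ $\E(C,-)=0$ and $C$ projective. The hypothesis thus yields a right $C$-determined deflation $\alpha\colon X_n\to C$ with $\Im\,\C(C,\alpha)=H$; it is not a split epimorphism since $\id_C\notin H$. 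Being a right $C$-determined deflation, $\alpha$ has a minimal right determiner $C_0$, and by Corollary~\ref{cort} $C_0$ has no non-zero projective summands and lies in $\C_r$; moreover $C_0$ is a direct summand of the right determiner $C$, so every indecomposable summand of $C_0$ is one of the non-projective $C_i$.

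The crux is to show that \emph{every non-projective $C_i$ occurring in $C$ is a direct summand of $C_0$}. If this failed for some non-projective $C_i$, then $C_0$ would be a direct summand of $C':=\bigoplus_{j\ne i}C_j^{m_j}$ (the complement in $C$ of all copies of $C_i$), and therefore $C'$ would again be a right determiner of $\alpha$, a direct sum containing a right determiner as a summand being again a right determiner. I would contradict this using $g:=\iota$, the inclusion $\iota\colon C_i\hookrightarrow C$ of one copy of $C_i$, with retraction $\pi\colon C\to C_i$. On the one hand, $\iota$ does not factor through $\alpha$: if $\iota=\alpha s$, then the idempotent $\iota\pi=\alpha(s\pi)$ lies in $\Im\,\C(C,\alpha)=\rad_\C(C,C)+\P(C,C)$, so $\id_{C_i}=\pi(\iota\pi)\iota$ lies in $\rad_\C(C_i,C_i)+\P(C_i,C_i)$ (both ideals), forcing $C_i$ projective just as above. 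On the other hand, for every $h\in\C(C',C_i)$ the composite $\iota h$ factors through $\alpha$: since no summand of $C'$ is isomorphic to the indecomposable $C_i$ we have $h\in\rad_\C(C',C_i)$, hence $\iota h\pi'\in\rad_\C(C,C)\subseteq\Im\,\C(C,\alpha)$ for the projection $\pi'\colon C\to C'$, say $\iota h\pi'=\alpha t$, and then $\iota h=\iota h\pi'\iota'=\alpha(t\iota')$ with $\iota'\colon C'\hookrightarrow C$ the inclusion. Thus, with $L:=C_i$ and $g:=\iota$, the composite $g\circ h$ factors through $\alpha$ for every $h\in\C(C',L)$ while $g$ does not, contradicting that $C'$ is a right determiner of $\alpha$; this proves the claim.

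It follows that $C_0\cong\bigoplus_iC_i^{l_i}$, the sum ranging over the non-projective $C_i$, with each $l_i\ge1$; since $C_0\in\C_r$, the functor $D\E(C_0,-)\cong\bigoplus_iD\E(C_i,-)^{l_i}$ is representable, and each $D\E(C_i,-)$, being a direct summand of it, is representable, so $C_i\in\C_r$ for every non-projective $C_i$ in $C$; hence $D\E(C,-)\cong\bigoplus_iD\E(C_i,-)^{m_i}$ is a finite direct sum of representable functors and $C\in\C_r$. I expect the main obstacle to be the bookkeeping forced by a decomposable $C$: one must test against the inclusion of a \emph{single} copy of $C_i$ while deleting the \emph{whole} isoclass of $C_i$ from $C$, so that every map $C'\to C_i$ is genuinely in $\rad_\C$, and one must invoke repeatedly the elementary fact that an idempotent (or the identity) lying in $\rad_\C+\P$ forces the object to be projective via Lemma~\ref{def2}. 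The background facts that also deserve a line are the existence of a minimal right determiner of the deflation $\alpha$ (implicit in Corollary~\ref{cort}) and the fact that a direct summand of a representable functor on $\overline\C$ is representable, both of which rest on the Krull--Schmidt property.
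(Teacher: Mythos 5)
Your proposal is correct in substance but takes a genuinely different, and heavier, route than the paper. The paper never tests the hypothesis at $X_{n+1}=C$: for each non-projective indecomposable summand $C'$ of $C$ it feeds in $X_{n+1}=C'$ and $H=\rad_{\C}(C,C')$ (after checking $\P(C,C')\subseteq\rad_{\C}(C,C')$), gets a right $C$-determined deflation $\alpha_n\colon X_n\to C'$ with $\Im~\C(C,\alpha_n)=\rad_{\C}(C,C')$, observes that $\Id_{C'}$ almost factors through $\alpha_n$ (every radical $r\colon L\to C'$ satisfies $rs\in\rad_{\C}(C,C')$ for all $s\colon C\to L$, so $r$ factors through $\alpha_n$ by $C$-determinedness, while $\Id_{C'}$ does not), and concludes by Lemma \ref{lst} and Proposition \ref{desc} --- all of which are proved in the paper. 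You instead use the single pair $X_{n+1}=C$, $H=\rad_{\C}(C,C)+\P(C,C)$ and finish through the minimal right determiner $C_0$: Corollary \ref{cort}, the exchange argument showing each non-projective $C_i$ is a summand of $C_0$, and the fact that a summand of a representable functor on $\overline\C$ is representable. Those pieces of yours that are argued do check out: the properness of $H$, the fact that an object containing a determiner as a summand is again a determiner, the computation that $\iota\colon C_i\to C$ does not factor through $\alpha$ while $\iota h$ does for every $h\colon C'\to C_i$ (such $h$ being radical), and the Yoneda/idempotent-splitting step, which is legitimate because $\overline\C$ inherits the Krull--Schmidt property. The one soft spot is the sentence ``being right $C$-determined, $\alpha$ has a minimal right determiner $C_0$'': Corollary \ref{cort} is only a conditional statement (with proof omitted), and the existence of minimal right determiners is nowhere established in this paper (it is true, and the paper itself leans on it in Proposition \ref{pro0}, but its proof in the Jiao--Le/Ringel style runs through exactly the almost-factors-through/Auslander--Reiten machinery of Lemma \ref{lst}), so as written you assume the hardest ingredient rather than prove it. It is worth noting that your own test data already yields the paper's direct finish: for non-projective $C_i$ and any radical $h\colon T\to C_i$, one has $\iota hs\in\rad_{\C}(C,C)\subseteq\Im~\C(C,\alpha)$ for every $s\colon C\to T$, so $\iota h$ factors through $\alpha$ by right $C$-determinedness, while your step (a) shows $\iota$ does not; hence $\iota$ almost factors through $\alpha$, and Lemma \ref{lst} together with Proposition \ref{desc} gives $C_i\in\C_r$ with no minimal determiners, no Corollary \ref{cort}, and no stable-category splitting argument.
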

\begin{proof}
It suffices to show that each non-projective indecomposable direct summand $C'$ of $C$ lies in $\C_r$ by Krull-Schmidt Theorem.
One can check that each morphism $f\in\P(C,C')$ is not a split epimorphism, since $C'$ is non-projective.
Notice that $\rad_\C(C,C')$ is formed by non split epimorphism, so $\P(C,C')\subseteq\rad_\C(C,C')$. Because $\rad_\C(C,C')$ is a right $\End_{\C}(C)$-submodule of $\C(C,C')$, by  assumption
there exists a right $C$-determined deflation $\alpha_{n}\colon X_{n}\to C'$ such that $\rad_\C(C,C')=\Im~\C(C,\alpha_{n})$.

Since $\Im~\C(C,\alpha_{n}) = \rad_\C(C,C')$ is a proper submodule of $\C(C,C')$, the deflation $\alpha_n$ is not a split epimorphism.
Thus $\Id_{C'}$ does not factor through $\alpha_n$. Let $r\in\rad_\C(L,C')$. For each $s\colon C\to L$,
the morphism $r\circ s$ lies in $\rad_\C(C,C')=\Im~\C(C,\alpha_n)$. It follows that $r\circ s$ factors through $\alpha_n$.
Since $\alpha_n$ is right $C$-determined, we have that $r$ factors through $\alpha_n$ and $\Id_{C'}$ almost factors through $\alpha_n$.
By Lemma~\ref{lst}, there exists an {Auslander-Reiten $n$-exangle} ending at $C'$.
Then the assertion follows from Proposition~\ref{desc}.
\end{proof}

We are now ready to state our main result.
\begin{theorem}\label{thm}
Let $\C$ be an $\Ext$-finite, Krull-Schmidt and $k$-linear $n$-exangulated category and $C\in\C$. Then the following statements are equivalent:
\begin{enumerate}
\item[$(1)$]
$C\in\C_r$.
\item[$(2)$]\rm
For each $X_{n+1}\in\C$ and each right $\End_{\C}(C)$-submodule $H$ of $\C(C,X_{n+1})$ satisfying $\P(C,X_{n+1})\subseteq H$, there exists a distinguished $n$-exangle
$$X_{\bullet}:X_0\xrightarrow{}X_1\xrightarrow{}X_2\xrightarrow{}\cdots\xrightarrow{}X_n\xrightarrow{\alpha}X_{n+1}\overset{\eta}{\dashrightarrow}$$
where $\alpha$ is right $C$-determined such that $H=\rm\Im~\C(C,\alpha)$.
\end{enumerate}
If moreover $C$ is non-projective indecomposable, then all statements $(1)-(6)$ are equivalent.
\begin{enumerate}
\item[$(3)$]\rm There exists an inflation $\alpha\colon X_{0}\to X_{1}$ whose intrinsic weak $n$-cokernel is $C$ such that $\alpha$ is left $K$-determined for some object $K$.

\item[$(4)$]\rm There exists an {Auslander-Reiten $n$-exangle} ending at $C$.
\item[$(5)$]\rm
There exists a non-split epimorphism deflation which is right $C$-determined.
\item[$(6)$]\rm
There exists a deflation $\alpha\colon X_{n}\to X_{n+1}$ and a morphism $f\colon C\to X_{n+1}$ such that $f$ almost factors through $\alpha$.

\end{enumerate}
\end{theorem}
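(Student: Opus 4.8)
The plan is to deduce the theorem entirely from the results established in Section~3 and the earlier part of Section~4, assembling the equivalences rather than reproving anything. The equivalence $(1)\Leftrightarrow(2)$ holds with no restriction on $C$ and is exactly the combination of Propositions~\ref{pro1} and~\ref{hj}: $(1)\Rightarrow(2)$ is Proposition~\ref{pro1} (discarding the extra information $X_0\in\add(\tau_nC)$), and $(2)\Rightarrow(1)$ is Proposition~\ref{hj}, whose hypothesis is precisely what $(2)$ supplies. For the second half I would fix $C$ non-projective indecomposable and establish $(1)\Leftrightarrow(3)$, $(1)\Leftrightarrow(4)$, $(4)\Leftrightarrow(6)$ and $(1)\Leftrightarrow(5)$; together with $(1)\Leftrightarrow(2)$ this links all six statements.

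\textbf{The equivalences with $(3)$, $(4)$ and $(6)$.} Since $C$ is non-projective indecomposable it has no non-zero projective direct summand, so $(1)\Leftrightarrow(3)$ will follow at once from the dual of Proposition~\ref{prope} taken with $K=C$: $C\in\C_r$ precisely when there is an inflation $\alpha\colon X_0\to X_1$ that is left $K$-determined for some $K$ and has intrinsic weak $n$-cokernel $C$. The equivalence $(1)\Leftrightarrow(4)$ is Proposition~\ref{desc}(1). For $(4)\Rightarrow(6)$ I would take an Auslander--Reiten $n$-exangle $X_0\to\cdots\to X_n\xrightarrow{\alpha_n}C\overset{\delta}{\dashrightarrow}$ and put $f=\Id_C$, $\alpha=\alpha_n$: the identity does not factor through $\alpha_n$ (otherwise $\delta=0$ and $\alpha_n$ is a split epimorphism), whereas any $h\in\rad_\C(T,C)$ fails to be a split epimorphism and hence factors through the right almost split map $\alpha_n$, so $\Id_C$ almost factors through the deflation $\alpha_n$. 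The converse $(6)\Rightarrow(4)$ is Lemma~\ref{lst}, where indecomposability of $C$ is used.

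\textbf{The equivalence $(1)\Leftrightarrow(5)$.} For $(4)\Rightarrow(5)$, the last map $\alpha_n$ of the Auslander--Reiten $n$-exangle $X_0\to\cdots\to X_n\xrightarrow{\alpha_n}C\overset{\delta}{\dashrightarrow}$ is a deflation which is right almost split, hence a non-split epimorphism. By Proposition~\ref{lem11} the term $X_0\cong\tau_nC$ lies in $\C_l$, so Lemma~\ref{lem49} shows $\alpha_n$ is right $\tau_n^-X_0$-determined; since $\tau_n^-X_0\cong C$ in $\underline\C$ (again Proposition~\ref{lem11}), Corollary~\ref{cor} upgrades this to $\alpha_n$ being right $C$-determined, which is $(5)$. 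For $(5)\Rightarrow(1)$, let $\alpha$ be a non-split deflation that is right $C$-determined and let $C'$ be a minimal right determiner of $\alpha$; then $C'$ is a direct summand of $C$, hence $C'\cong C$ or $C'=0$ by indecomposability. The case $C'=0$ is impossible, since a right $0$-determined morphism satisfies its defining factorisation condition vacuously for every $g\colon L\to Y$, so $\Id_Y$ would factor through $\alpha$ and $\alpha$ would split. Hence $C'\cong C$, and Corollary~\ref{cort} gives $C\in\C_r$.

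\textbf{Main obstacle.} The theorem is mostly an assembly of earlier statements, so the delicate points are bookkeeping ones: dualising Proposition~\ref{prope} correctly for statement $(3)$, and transporting ``right $C$-determined'' along the stable isomorphism $\tau_n^-\tau_nC\cong C$ (combining Lemma~\ref{lem49}, Proposition~\ref{lem11} and Corollary~\ref{cor}) so that the deflation in $(5)$ is determined by $C$ itself, not merely by a stably isomorphic object. The small fact that a right $0$-determined morphism is a split epimorphism should also be recorded explicitly, as it is what rules out the trivial determiner in $(5)\Rightarrow(1)$.
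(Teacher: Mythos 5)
Your proposal is correct, and its core coincides with the paper's own assembly: $(1)\Leftrightarrow(2)$ via Propositions \ref{pro1} and \ref{hj}, $(1)\Leftrightarrow(3)$ via the dual of Proposition \ref{prope}, $(1)\Leftrightarrow(4)$ via Proposition \ref{desc}, and $(6)\Rightarrow(4)$ via Lemma \ref{lst}. Where you genuinely diverge is in how $(5)$ and $(6)$ are wired into the implication graph. The paper closes the loop by proving $(4)\Rightarrow(5)\Rightarrow(6)$: for $(4)\Rightarrow(5)$ it simply observes that a right almost split deflation $\alpha_n$ ending at $C$ is right $C$-determined (if $g\colon L\to C$ satisfies the test condition then $g$ cannot be a split epimorphism, since otherwise $\Id_C$ would factor through $\alpha_n$, so $g$ factors through the right almost split $\alpha_n$) --- a one-line check, lighter than your detour through Lemma \ref{lem49}, Proposition \ref{lem11} and Corollary \ref{cor}; and for $(5)\Rightarrow(6)$ it asserts that $C$ is a minimal right determiner of the deflation and concludes, a step that implicitly uses the fact that a minimal right determiner admits a morphism almost factoring through the deflation, which is proved nowhere in the paper. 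You instead prove $(4)\Rightarrow(6)$ directly with $f=\Id_C$ (a clean, correct argument: $\Id_C$ does not factor through $\alpha_n$, while every $h\in\rad_{\C}(T,C)$ is not a split epimorphism and hence factors through $\alpha_n$) and $(5)\Rightarrow(1)$ via the minimal right determiner together with Corollary \ref{cort}. This bypasses the unproved almost-factoring fact, at the cost of invoking the existence of a minimal right determiner of $\alpha$ --- but the paper relies on exactly the same existence implicitly, both in its $(5)\Rightarrow(6)$ step and in Proposition \ref{pro0}, so you are on equal footing there. In short, your wiring is logically equivalent, slightly more self-contained relative to the results actually stated in the paper on the $(5)$/$(6)$ side, while the paper's $(4)\Rightarrow(5)$ is simpler than yours; it would be worth recording explicitly, as you do, that a right $0$-determined morphism is a split epimorphism, and, as the paper does not, the minimal-determiner facts both arguments lean on.
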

\begin{proof}We summarize the proof as the following diagram
\[
\xymatrix@C=3cm@R=1.5cm{&({\rm 1})\ar@{<=>}[d]_{
\begin{matrix}
\text{Proposition}~\ref{pro1} \\
\text{Proposition}~\ref{hj}
\end{matrix}}\ar@{<=>}[rd]^{\text{Proposition}~\ref{desc}}&\ar@{<=>}[l]_{
\begin{matrix}
\text{\small The dual of} \\
\text{\small Proposition}~\ref{prope}
\end{matrix}}({\rm 3})&({\rm 5})\ar@{=>}[d]\ar@{<=}[ld]& \\
&({\rm 2})&({\rm 4})\ar@{<=}[r]_{\text{Lemma}~\ref{lst}}&({\rm 6}).&}
\]

Hence we only need to show that $(4)\Rightarrow (5)\Rightarrow (6)$.

It is easy to see that the right almost split deflation ending at $C$ is a non-split epimorphism and right $C$-determined. Then we have
$(4)\Rightarrow (5)$. Let $\alpha$ be a right $C$-determined deflation which is not a split epimorphism. We have that $C$ is a minimal
right determiner of $\alpha$. Thus $(5)\Rightarrow (6)$ holds true.
\end{proof}

\begin{remark}In Theorem \ref{thm}, when $\C$ is an exact category, it is just Theorem 6.3 in
\cite{JL}, when $\C$ is an extriangulated category, it is just Theorem 4.13 in
\cite{ZTH}, when $\C$ is an $n$-abelian category with enough projectives and enough injectives, it is just Theorem 4.16 (They only have (1), (2), (3) which are equivalent) in \cite{XLY}, and when $\C$ is an $(n+2)$-angulated category, it is a new phenomena.
\end{remark}


\textbf{Jian He}\\
Department of Mathematics, Nanjing University, 210093 Nanjing, Jiangsu, P. R. China\\
E-mail: \textsf{jianhe30@163.com}\\[0.3cm]
\textbf{Jing He}\\
College of Science, Hunan University of Technology and Business, 410205 Changsha, Hunan P. R. China\\
E-mail: \textsf{jinghe1003@163.com}\\[0.3cm]
\textbf{Panyue Zhou}\\
College of Mathematics, Hunan Institute of Science and Technology, 414006 Yueyang, Hunan, P. R. China.\\
E-mail: \textsf{panyuezhou@163.com}

\end{document}